\newtheorem{theorem}{Theorem}
\newtheorem{lemma}[theorem]{Lemma}
\newtheorem{remark}{Remark}
\theoremstyle{definition}
\newtheorem{example}{Example}
\algnewcommand\algorithmicinput{\textbf{INPUT:}}
\algnewcommand\INPUT{\item[\algorithmicinput]}
\algnewcommand\algorithmicoutput{\textbf{OUTPUT:}}
\algnewcommand\OUTPUT{\item[\algorithmicoutput]}
\DeclarePairedDelimiter\floor{\lfloor}{\rfloor}
\theoremstyle{plain}
\newtheorem{assumption}{Assumption}
\title{Variance estimation in graphs with the fused lasso}
\author[1]{Oscar Hernan Madrid Padilla}
\affil[1]{\small Department of Statistics, University of California, Los Angeles}
\begin{document}
	\maketitle
	
	\begin{abstract}

We study the problem of variance estimation in general graph-structured problems. First, we develop a linear time estimator for the homoscedastic case that can consistently estimate the variance in general graphs. We show that our estimator attains minimax rates for the chain and 2D grid graphs when the mean signal has  total variation with canonical scaling. Furthermore, we provide general upper bounds on the mean squared error performance of the fused lasso estimator in general graphs under a  moment condition and a bound on the tail behavior of the errors. These upper bounds allow us to generalize for broader classes of distributions, such as sub-exponential,  many existing results on the fused lasso that are only known to hold with the assumption that errors are sub-Gaussian random variables. Exploiting our upper bounds, we then study a simple total variation regularization estimator for estimating the signal of variances in the heteroscedastic case. We also provide lower bounds  showing  that our heteroscedastic  variance estimator attains minimax rates for estimating signals of bounded variation in grid graphs, and $K$-nearest neighbor graphs, and the estimator  is consistent for estimating the variances in any connected graph.
		\vskip 5mm
		\textbf{Keywords}: 	Total variation, variance in regession, local adaptivity, fused lasso.
	\end{abstract}
	

\section{Introduction}


Consider the problem of estimating signals $\theta^* \in \mathbb{R}^n$ and $v^*\in \mathbb{R}_{+}^n$, based on  data  $\{y_i\}_{i=1}^n \subset \mathbb{R}$ generated as
\begin{equation}
	\label{eqn:model}
	y_i \,=\, \theta_i^*   +   (v_i ^*)^{1/2}\epsilon_i,
\end{equation}
where  $\epsilon_1,\ldots,\epsilon_n$ are independent and  $\mathbb{E}(\epsilon_i) = 0$,  and  $\text{var}(\epsilon_i)  = 1$, and where $y_i$ is associated with node $i$  in a  conected graph $G = (V,E)$ where  $V =\{1,\ldots,n\}$ and  $E \subset  V\times  V$. 
This class of graph estimation problems has appeared in applications in biology  \citep{tibshirani2005sparsity}, image processing \citep{rudin1992nonlinear,tansey2014false}, traffic detection \citep{wang2016trend}, among others.

A common method for estimating the signal $\theta^*$ is the fused lasso over graphs, also known as (anisotropic) total variation denoising over graphs, independently introduced by   \cite{rudin1992nonlinear} and \citep{tibshirani2005sparsity}. This consists of solving  the optimization problem 
\begin{equation}
	\label{eqn:gfl}  
	\hat{\theta}  \,:=\, \underset{\theta \in  \mathbb{R}^n}{\arg\min} \,\left\{  \frac{1}{2}\|  y-\theta\|^2 \,+\,\lambda \|\nabla_G \theta\|_1         \right\},
\end{equation}
where    $y =(y_1,\ldots,y_n)^{\top}$, $\lambda >0$ is a tuning parameter, and  $\nabla_G \in  \mathbb{R}^{  \vert E\vert \times n }$ is the incidence matrix of $G$.  Specifically,  each row of  $\nabla_G$ corresponds to an edge $e = (e^{+},e^{-}) \in E$ and
\[
(\nabla_G)_{e,\ell}   \begin{cases}
	1 & \text{if}  \,\, \ell=e^{+},\\
	-1 & \text{if}  \,\, \ell=e^{-},\\
	0 & \text{otherwise.}
\end{cases}
\]
The motivation behind (\ref{eqn:gfl}) is to have an estimator that balances between fitting the data well, with the first term in the objective function in (\ref{eqn:gfl}), and having a small complexity in terms of the quantity $\|\nabla_G \theta\|_1$ which is known as the total variation of the signal $\theta$ along the graph $G$. Intuitively, if the graph $G$ is informative about the signals $\theta^*$ and $v^*$, then we would expect that $\|\nabla_G\theta^*\|_1,\|\nabla_G v^*\|_1 << n$.      
For instance, suppose that $G$ is constructed as a $K$-NN graph based on features $\{x_i\}_{i=1}^n \subset \mathbb{R}^d$, and assume that $\theta_i^* = f_0(x_i)$ for all $i=1,\ldots, n$, and for   a smooth function $f_0$. If $K$ is small, then for $\{i,j\}$ an edge in $G$, we have that $\vert \theta_i^* - \theta_j^*\vert  =  \vert f_0(x_i) – f_0(x_j)\vert  $ which would be a small quantity or zero for most edges. Then summing over all the edges, we obtain $\| \nabla_G \theta^*\|_1 <<n$. In fact, 
\cite{padilla2018adaptive} showed that $\| \nabla_G \theta^*\|_1 = O_{ \text{pr} }(n^{1-1/d} )$, ignoring logarithmic factors,  provided that $f_0 $ is a piecewise Lipschitz  function.

The estimator defined in (\ref{eqn:gfl})  has attracted a lot of attention in the literature. Specifically,  computationally efficient algorithms for chain graphs were developed by \cite{johnson2013dynamic}, for grid graphs by \cite{barbero2014modular}, and for general graphs by \cite{tansey2015fast,chambolle2009total}.  Moreover, several authors have studied the statistical properties of  (\ref{eqn:gfl}) in different settings. In particular,  \cite{mammen1997locally}, and \cite{tibshirani2014adaptive} studied slow rates of convergence in chain graphs with signals having bounded variation. \cite{dalalyan2017prediction,lin2017sharp,guntuboyina2020adaptive}, and \cite{ortelli2021prediction} proved fast rates  for piecewise constant signals.   \cite{hutter2016optimal}, \cite{sadhanala2016total}, \cite{ortelli2020adaptive}, and \cite{chatterjee2021new} studied statistical properties of total variation denoising in grid graphs. \cite{padilla2016dfs}, and  \cite{ortelli2018total} studied the fused lasso in general graphs.
\cite{wang2016trend}, and \cite{sadhanala2021multivariate} focused on developing higher order versions of  total variation denoising.

Despite the tremendous attention from the literature focusing on the fused lasso as defined in (\ref{eqn:gfl}), most of the statistical work assumes that the errors $\{\epsilon_i\}_{i=1}^n$ are sub-Gaussian when studying the estimator (\ref{eqn:gfl}). While some works have considered the model in (\ref{eqn:model}) with  more arbitrary distributions, such as \cite{madrid2020risk} and \cite{ye2021non}, these efforts have studied the quantile version of  (\ref{eqn:model}). Thus,  the performance of the estimator defined in (\ref{eqn:gfl}) is not understood beyond the sub-Gaussian errors assumption. 

Additionally, the literature has been silent about estimating the variances in (\ref{eqn:model}).  Thus, there is currently no estimator available in the literature for estimating the variances 
even in the homoscedastic case, where the $v^*_i$ are all equal to some $v_0^*>0$,  when $G$ is a general graph.  Estimation of the variance is an important problem because it would allow practitioners the possibility of quantifying the variability of the data in different regions of the graph. For instance, if $y_i$ is the crime rate at location $i$, then we could have two locations where $\mathbb{E}(y_i) = \mathbb{E}(y_j)$, however, knowing that $\mathrm{var}(y_i) > \mathrm{var}(y_j)$ would be informative about the nature of  crime at location $i$ versus location $j$. 

In this paper, we fill the gaps described above regarding mean and variance estimation in general graphs. Our main contributions are listed next.

\subsection{Summary of results}

We make the following contributions for the model described in (\ref{eqn:model}) with a connected graph $G$. 

\begin{enumerate}
	\item If the variances satisfy $v_i^* = v_0^*$ for all $i=1,\ldots,n$, then we show that, under a simple moment condition, there exists an  estimator $\hat{v}$  that can be found in linear time, $O(n +  \vert E\vert) $, and satisfies 
	\begin{equation}
		\label{eqn:u1}
		\vert \hat{v} -v_0^*\vert \,=\,O_{\mathrm{pr}}\left(   \frac{v_0^*}{n^{1/2}}  +  \frac{\|\nabla_G \theta^*\|_{1} }{n}   \right).
	\end{equation} The estimator $\hat{v}$ is based on first running depth-first search (DFS) on the graph $G$ and then using the differences of the $y_i$'s along the ordering. A detailed construction is given in Section \ref{sec:homo}. Notably, when $G$ is a 1D or 2D grid graph and  $\|\nabla_G \theta^*\|_{1}$ has a canonical scaling, the rate in (\ref{eqn:u1}) is minimax optimal.  Moreover, our estimator is the first for the problem of estimating the variance in the sequence model where the measurements are collected in a general graph.  We also  show with experiments in Appendix \ref{sec:model_selection} that the estimator $\hat{v}$ can be useful for model selection when the goal is to estimate $\theta^*$.
	
	\item For the fused lasso estimator defined in (\ref{eqn:gfl}), under a moment condition and an assumption stating that 
	\begin{equation}
		\label{eqn:tail_c}
		\underset{i=1,\ldots,n}{\max} \,\mathrm{pr}(  \vert \epsilon_i \vert >U_n )  \,\rightarrow\,0
	\end{equation}
	fast enough, where $U_n >0$ is a sequence,  we show that:
	\begin{enumerate}
		\item For any connected graph,  ignoring logarithmic factors, it holds that
		\begin{equation}
			\label{eqn:rate0.1}
			\frac{\|\hat{\theta}-\theta^*\|^2}{n}\,=\, O_{\mathrm{pr}}\left(\frac{U_n^{4/3}  \|\nabla_G\theta^*\|_1^{2/3}    }{n^{2/3}}\,+\, \frac{U_n^2 }{n} \right),
		\end{equation}
		and the same upper bound holds for an estimator that can be found in linear time. Thus, we generalize the conclusions in Theorems 2 and 3 from \cite{padilla2016dfs} to hold with noise beyond sub-Gaussian noise. For instance, for sub-Exponential noise the term $U_n$ would satisfy $U_n = O(\log n)$.  
		\item For the $d$-dimensional grid graph with $d>1$ and $n$ nodes, we show that 
		\begin{equation}
			\label{eqn:rate0.2}
			\frac{\|\hat{\theta}-\theta^*\|^2}{n}\,=\, O_{\mathrm{pr}}\left(\frac{U_n  \|  \nabla_G\theta^*\|_1   }{n}    \,+\,\frac{U_n^2}{n}  \right),
		\end{equation}
		if we disregard logarithmic factors. Thus, under the canonical scaling $\|  \nabla_G\theta^*\|_1    =  O(n^{1-1/d})$, see e.g  \cite{sadhanala2016total}, the upper bound is minimax optimal thereby generalizing the results from \cite{hutter2016optimal} to settings with error distributions that satisfy (\ref{eqn:tail_c}). 

		\item For $K$-nearest neighbor  ($K$-NN) graphs constructed with the assumptions from \cite{padilla2018adaptive}, we show that the fused lasso estimator satisfies that 
		\begin{equation}
			\label{eqn:rate0.3}
			\frac{\|\hat{\theta}-\theta^*\|^2}{n}\,=\, O_{\mathrm{pr}}\left( \frac{U_n }{n^{1/d}}\right),
		\end{equation}
		up to logarithmic factors. Hence, we generalize Theorem 2 from \cite{padilla2018adaptive} to models with more general error distributions. Moreover, if $U_n  = O\{\text{poly}(\log n)\}$ for a polynomial function  $\text{poly}(\cdot)$, then the rate in (\ref{eqn:rate0.3}) is minimax optimal for classes of  bounded variation. 

	\end{enumerate}
	\item  In the heteroscedastic setting, where some of the $v_i ^*$ can be different from each other, we are the first in the literature to develop an estimator for the vector of variances $v^* \in \mathbb{R}^n$ in general graph structured models. Specifically,
	we provide a simple estimator $\hat{v} $ of $v^* $ that can be found with the same computational complexity as that of  $\hat{\theta}$. For the proposed estimator we show that 
	there exists $U_n^{\prime} $ satisfying  $U_n^{\prime}   =O(1+U_n^2)$  for which the upper bounds in (\ref{eqn:rate0.1})--(\ref{eqn:rate0.3}) hold replacing $\|\hat{\theta}-\theta^*\|^2/n$ with  $\|\hat{v}-v^*\|^2/n$ and 
	$\|\nabla_G \theta^*\|_1  $ with $\|\nabla_G \theta^*\|_1  +  \|\nabla_G v^*\|_1 $. Our results hold with the same assumptions that those in 2), but with a stronger moment condition presented in Theorem \ref{thm5}. Moreover, when $U_n = O\{\text{poly}(\log n)\}$ and $\| \nabla_G \theta^*\|_1 \asymp  \|\nabla_G v^*\|_1$, our variance estimator attains, up to log factors, the same rates as $\hat{\theta}$ attains in (\ref{eqn:rate0.1})--(\ref{eqn:rate0.3}). We also show, save by logarithmic factors,  that the upper bounds in the case of grid and $K$-NN graphs are minimax optimal, see Lemmas \ref{lem:lower1}--\ref{lem:lower2}.
\end{enumerate}

\subsection{Other related work }

Besides total variation, other popular methods for mean estimation in graph problems include   kernels based methods  \citep{smola2003kernels,zhu2003semi,zhou2005learning},  wavelet  constructions  \citep{crovella2003graph,coifman2006diffusion,gavish2010multiscale,hammond2011wavelets,sharpnack2013detecting,shuman2013emerging},  tree based estimators \citep{donoho1997cart,blanchard2007optimal,chatterjee2021adaptive,madrid2021lattice}, and $\ell_0$-regularization approches \citep{fan2018approximate,yu2022optimal}.

As for variance estimation, some methods estimate the conditional mean and then compute the residuals before  proceeding to estimate the conditional variance. Some of these approaches include \cite{hall1989variance,fan1998efficient}. Other methods, as it is the case of our proposed approach, do not consider the residuals. Some of  such works include \cite{wang2008effect,cai2009variance}, which studied rates of convergence for univariate nonparametric regression with Lipschitz classes. \cite{cai2008adaptive} considered a wavelet thresholding approach also for univariate data. More recently, \cite{shen2020optimal} considered univariate H\"{o}lder functions classes  and some homoscedastic multivariate settings.





Finally, total variation denoising methods have become popular as a tool to tackle different statistics and machine learning problems.  \cite{ortelli2020adaptive} and \cite{sadhanala2019additive} studied additive models,  \cite{padilla2018graphon}  proposed a method for graphon estimation,  \cite{padilla2021causal} considered a method for interpretable causal inference,  \cite{dallakyan2022fused} developed a method for covariance matrix estimation.  More recently, \cite{tran2022generalized} proposed an $\ell_1$ + $\ell_2$  based penalty over graphs called the Generalized
Elastic Net aimed for problems where  features are associated with the nodes of graph.


\subsection{Notation}

Throughout, for a vector $v \in \mathbb{R}^n$, we define its $\ell_1$, $\ell_2$ and $\ell_{\infty}$ norms as $\|v\|_1 =\sum_{i=1}^{n} \vert v_i\vert$,   $\|v\|= (\sum_{i=1}^{n} \vert v_i\vert)^{1/2}$, $\|v\|_{\infty} =  \max_{i=1,\ldots,n} \vert v_i\vert$, respectively. Given a sequence of random variables $X_n$ and a squence of positive numbers $a_n$, we write $X_n = O_{\mathrm{pr}}(a_n)$ if for every $t>0$ there exists $C>0$ such that 
$\mathrm{pr}(  X_n > C a_n) < t$ for all $n$. For two sequences $a_n$ and $b_n$ we write  $a_n \asymp b_n$ if there exists positive constants $c$ and $C$ such that $c a_n \leq b_n \leq C a_n$ for all $n$.  A $d$-dimensional grid graph of size $n =  m^d$ is constructed  as the $d$-dimensional lattice $\{1,\ldots,m\}^d$, where  $i,j \in \{1,\ldots,m\}^d$ are connected if and only if $\|i-j\|_1 =1$. We also write $\mathbf{1} =  (1,\ldots,1 )^{\top}\in \mathbb{R}^n$ and $\bar{a} =  \frac{1}{n} \sum_{i=1}^{n} a_i $ for a vector $a \in \mathbb{R}^n$. For a function $f  \,:\,  [0,1]^d \rightarrow \mathbb{R}$,  we write $\| f \|_2  :=   \sqrt{\int_{ [0,1]^d     }  f(x)^2 dx  }  $.

\subsection{Outline}

The rest of the paper is organized as follows. In Section \ref{sec:homo} we introduce the estimator for the homoscedastic case and show an upper bound on its performance. In Section \ref{sec:hete} we start by defining our estimator for the heteroscedastic case. In Section \ref{sec:gen} we provide a general upper bound for the fused lasso estimator. Then we apply our new result in Section \ref{sec:variance} to obtain general upper bounds for our variances estimator in the heteroscedastic case, and conclude by providing matching lower bounds. Section \ref{sec:experiments}  contains numerical evaluations of the proposed methods in both simulated and real data.    All the proofs are deferred to the Appendix.


\section{Homoscedastic case}
\label{sec:homo}

This section considers the homoscedastic case, which means that $v_i^*=v_0^*$  for all $i$. We now give a motivation on how an estimator of the variance  in the homoscedastic setting can be used for model selection of (\ref{eqn:gfl}). Specifically, if $\hat{v}$ is an estimator of  $v_0^*$, then following  \cite{tibshirani2012degrees} and denoting $\hat{\theta}_{\lambda} $ the solution to (\ref{eqn:gfl}), we can define 
\[
\widehat{\mathrm{Risk}}(\lambda)  :=   \|  y - \hat{\theta}_{\lambda}\|^2 \,+\, 2 \hat{v} \widehat{\text{df}}_{\lambda},  
\]
where $\widehat{\text{df}}_{\lambda}$ is an estimator of the  degrees of freedom corresponding to the model associated with $\hat{\theta}_{\lambda}$,  see Equation (8) in \cite{tibshirani2012degrees}.  In fact, based on Equation 4 from  \cite{tibshirani2012degrees}, $\widehat{\text{df}}_{\lambda}$ can be taken as the number of connected components in $G$ induced by $\hat{\theta}_{\lambda}$ when removing the edges $(i,j)\in E$ satisfying $(\hat{\theta}_{\lambda})_i \neq (\hat{\theta}_{\lambda})_j $.  Hence, in practice one can choose the value of $\lambda$ that minimizes  $\widehat{\mathrm{Risk}}(\lambda)  $ or some variant of it, such as the one we consider in Section \ref{sec:tuning}. Therefore, for model selection, it is convenient to estimate $v_0^*$.

Before providing our estimator of  $v_0^*$, we state the statistical assumption needed to arrive at our main result of this section.

\begin{assumption}
	\label{as2} We assume that $v_i^* =v_0^*$ for $i=1,\ldots,n$, and $$  \underset{i=1,\ldots,n}{\max}  \mathbb{E}(\epsilon_i ^4)  = O(1).$$
\end{assumption}

Thus, we simply require that the fourth moments of the errors are uniformly bounded. We are now in positition to define our estimator. This is given as 
\begin{equation}
	\label{eqn:dfs_est}
	\hat{v} \,:=\,\displaystyle \frac{1}{2(  \lfloor{n/2\rfloor}-1)} \sum_{i=1}^{  \lfloor{n/2\rfloor}-1} \{ y_{ \sigma(2i) }  -    y_{ \sigma(2i-1) }\}^2,
\end{equation}
where $\sigma(1),\ldots,\sigma(n)$ are the  nodes in $G$ visited in order according to the DFS algorithm in the graph $G$, see \cite{tarjan1972depth}.  The DFS algorithm  proceeds as follows:

\noindent \textit{Procedure DFS}$(G,v)$: \\
\hspace{0.2in}$\textbf{Step 1:}$   Label $v$ as discovered.\\ 
\hspace{0.2in}$\textbf{Step 2:}$ For all $w$  such that $(w,v)  \in E$ do \\
If vertex $w$ is not label  then recursively call \textit{DFS}$(G,w)$.

Figure \ref{fig:dfs}  shows an example of a graph and a potential run of DFS.  Clearly,  by construction of DFS, the function $\sigma $ is a bijection from $\{1,\ldots,n\} $  onto itself, and the DFS ordering is not unique. Hence, we propose to select the DFS by randomly choosing the start of the algorithm.


\begin{figure}[t!]      
	\begin{center}
		\includegraphics[width=3in,height= 3in]{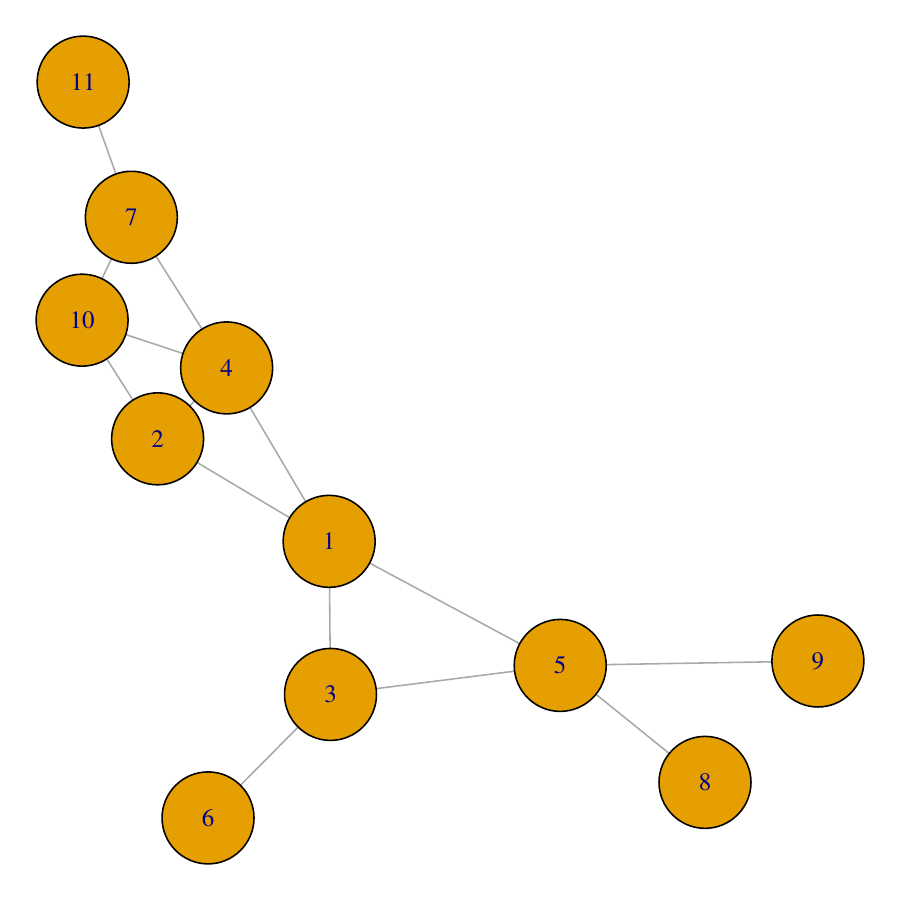}
	\end{center}
	\caption{\label{fig:dfs}An example of a graph $G$. Running DFS starting with the node $1$  produces the ordering $1,3,6,5,8,9,4,7,11,2, 10$. }
\end{figure}

Notice that the total computational complexity for computing $\hat{v}$ is $O(n+\vert E\vert)$, which comes from computing the DFS order.  Moreover, the estimator $\hat{v}$ does not require any tuning parameters to be specified.

The construction in (\ref{eqn:dfs_est})  can be motivated as follows. First, recall that by Lemma 1 in  \cite{padilla2016dfs}, it holds 
\[
\sum_{i=1}^{n-1} \vert  \theta_{\sigma(i)}^*- \theta_{\sigma(i+1)}^*\vert  \,\leq \, 2 \|\nabla_G \theta^*\|_1.
\]
Hence, if $\|\nabla_G \theta^*\|_1$ is small relative to $n$, then   the signal $\theta^*$ is well behaved in the order given by DFS. Our resulting  estimator defined in (\ref{eqn:dfs_est}) is then obtained by applying the idea of taking differences from \cite{rice1984bandwidth}, see also \cite{dette1998estimating} and \cite{tong2005estimating}.


\begin{theorem}
	\label{thm2}
	Suppose that Assumption \ref{as2} holds and   $\|\epsilon\|_{\infty} =O_{\mathrm{pr}}(U_n)$ for some positive sequence $U_n$. Then
	\begin{equation}
		\label{eqn:rate-0}
		\vert  v_0^* - \hat{v}\vert   \,:=\, O_{\mathrm{pr}}\left[   \frac{v_0^*}{n^{1/2} }  +  \frac{\{ U_n (v_0^*)^{1/2}  + \|\theta^*\|_{\infty}  \}   \| \nabla_G \theta^*\|_1  }{n}   \right].
	\end{equation}
\end{theorem}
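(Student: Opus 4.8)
The plan is to expand the squared pairwise differences and isolate the noise-only, signal-only, and cross contributions, then control each separately. Writing $m := \floor{n/2}-1$ and setting, for each $i$, the signal gap $d_i := \theta^*_{\sigma(2i)} - \theta^*_{\sigma(2i-1)}$ and the noise gap $\delta_i := \epsilon_{\sigma(2i)} - \epsilon_{\sigma(2i-1)}$, I would decompose
\[
\hat v - v_0^* \,=\, \underbrace{\frac{v_0^*}{2m}\sum_{i=1}^m(\delta_i^2 - 2)}_{A} \,+\, \underbrace{\frac{\sqrt{v_0^*}}{m}\sum_{i=1}^m d_i\delta_i}_{B} \,+\, \underbrace{\frac{1}{2m}\sum_{i=1}^m d_i^2}_{C},
\]
where the centering uses the identity $v_0^* = \frac{1}{2m}\sum_i 2v_0^*$ together with $\E[\delta_i^2]=2$. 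The crucial structural observation is that the pairs $\{\sigma(2i-1),\sigma(2i)\}$ are disjoint across $i$, so since $\sigma$ is a bijection the variables $\delta_1,\dots,\delta_m$ are \emph{independent}; this is what makes $A$ and $B$ sums of independent mean-zero terms.

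For term $A$, I would first verify $\E[\delta_i^2]=2$ (from independence and unit variance), so that $A$ has mean zero. Expanding $\E[\delta_i^4]=\E[\epsilon_{\sigma(2i)}^4]+\E[\epsilon_{\sigma(2i-1)}^4]+6$, Assumption \ref{as2} gives $\Var(\delta_i^2)=O(1)$ uniformly in $i$, so independence yields $\Var(A) = \frac{v_0^{*2}}{4m^2}\sum_i\Var(\delta_i^2) = O(v_0^{*2}/n)$, and Chebyshev's inequality gives $A = O_{\P}(v_0^*/\sqrt n)$, the first term of the stated rate.

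Terms $C$ and $B$ both exploit the DFS total-variation bound. Since the indices $2i-1$ and $2i$ are consecutive in the DFS order, Lemma 1 of \cite{padilla2016dfs} gives $\sum_{i=1}^m|d_i|\le\sum_{j=1}^{n-1}|\theta^*_{\sigma(j)}-\theta^*_{\sigma(j+1)}|\le 2\|\nabla_G\theta^*\|_1$, while trivially $\max_i|d_i|\le 2\|\theta^*\|_\infty$. For $C$ this yields the deterministic bound $C\le\frac{1}{2m}(\max_i|d_i|)\sum_i|d_i| = O(\|\theta^*\|_\infty\|\nabla_G\theta^*\|_1/n)$. For the cross term $B$, rather than use its variance I would control it by H\"older's inequality together with the hypothesis $\|\epsilon\|_\infty=O_{\P}(U_n)$: since $\max_i|\delta_i|\le 2\|\epsilon\|_\infty$, we get $|B|\le\frac{\sqrt{v_0^*}}{m}\,(\max_i|\delta_i|)\sum_i|d_i| = O_{\P}(U_n\sqrt{v_0^*}\,\|\nabla_G\theta^*\|_1/n)$. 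Summing the three bounds, using $m\asymp n$, gives exactly (\ref{eqn:rate-0}).

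I expect the main subtlety to be the bookkeeping in term $B$: the naive route through $\Var(B)$ produces the rate $\sqrt{v_0^*\|\theta^*\|_\infty\|\nabla_G\theta^*\|_1}/n$, which is not the clean form claimed, so the point is to instead route the cross term through the sup-norm control of the errors in order to extract the $U_n\sqrt{v_0^*}$ factor. Establishing the independence of the $\delta_i$ and confirming $\E[\delta_i^2]=2$ and $\E[\delta_i^4]=O(1)$ under the fourth-moment assumption are the other load-bearing checks; everything else is routine.
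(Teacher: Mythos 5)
Your proof is correct and follows essentially the same route as the paper: the paper groups your terms $B$ and $C$ together via the identity $a^2-(a+b)^2=-b(2a+b)$ and bounds them jointly by $\frac{(4\|\epsilon\|_\infty\sqrt{v_0^*}+2\|\theta^*\|_\infty)}{2m}\sum_i|d_i|$ with Lemma 1 of \cite{padilla2016dfs}, while the pure-noise term is handled exactly as you do, by a fourth-moment variance bound over the disjoint (hence independent) pairs and Chebyshev. The only cosmetic difference is your three-way split versus the paper's two-way split; the load-bearing steps are identical.
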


\begin{remark}
	\label{rem1}
	Consider the case where  $G$ is  the chain graph, and suppose that $\theta_i^* = f^*(i/n)$ for $i=1,\ldots,n$, for a  function  $f^* \,:\,[0,1] \rightarrow \mathbb{R}$, bounded   and of bounded total variation. Thus, $f^*\in \mathcal{C}$ where
	\[
	\mathcal{C}  := \{  f \,:\,[0,1] \rightarrow  \mathbb{R}\,:\,      \|f\|_{\infty} \leq C_1,   \,\mathrm{TV}(f)  \,\leq \, C_2       \},
	\]
	where $C_1$ and $C_2$ are positive constants, and $\mathrm{TV}(f) $ is the total variation defined as
	\[
	\mathrm{TV}(f)   \,:=\, \underset{  0\leq a_1 <  \ldots <a_m  \leq 1, \,\,m \in  \mathbb{N} }{\sup}\,   \sum_{j=1}^{m-1}\vert f(a_j) - f(a_{j+1}) \vert,  \,
	\]
	see the discussion about functions  of bounded total variation in \cite{tibshirani2014adaptive}. Then 
	$\max\{   \|\theta^*\|_{\infty} ,   \| \nabla_G \theta^*\|_1 \} = O(1)$. Hence, provided that $v_0^* =O(1)$  and $ U_n = O\{\mathrm{poly}(\log n )\} $ for 
	$\mathrm{poly}(\cdot)$ some  polynomial function, we obtain that
	\begin{equation}
		\label{eqn:chain_homo}
		\vert  v_0^* - \hat{v}\vert   \,:=\, O_{\mathrm{pr}}(   n^{-1/2} ),
	\end{equation}
	if we ignore logarithmic factors. Therefore, from Proposition 3 in  \cite{shen2020optimal}, the rate  in (\ref{eqn:chain_homo}) is minimax optimal in the class  $\mathcal{C}$. 	This follows since $\mathcal{C}$ is a larger class than that considered in Proposition 3 in  \cite{shen2020optimal} for the case corresponding to bounded Lipschitz  continuous functions.
\end{remark}

\begin{remark}
	\label{rem2}
	If $G$ is the 2D grid graph, then it is well known that $\| \nabla_G  \theta^*\|_1  \asymp  n^{1/2}$ is the canonical scaling, see \cite{sadhanala2016total} and our discussion in Appendix \ref{sec:canonical}. Hence, if  $\max\{v_0^*,\| \theta^*\|_{\infty}\} = O(1)$, and $ U_n = O\{\mathrm{poly}(\log n )\} $, then (\ref{eqn:chain_homo})  holds. Therefore, as in Remark \ref{rem1}, by Proposition 3 from  \cite{shen2020optimal},  $\hat{v}$ attains minimax rates when $\theta^*$ is in  the class
	\[
	\{  \theta \,:\,    \|\theta \|_{\infty} \leq C_1,\,    \|\nabla_G \theta \|_{1} \leq C_1   n^{1/2} \}
	\]
	for positive constants $C_1$ and $C_2$.
	
\end{remark}

Finally, for a general graph $G$,  if the graph does capture smoothness of the true signal in the sense that $ U_n\|\nabla_G \theta^*\|_1 / n \rightarrow 0$, then, as long as $\max\{v_0^*,  \| \theta^*\|_{\infty} \} = O(1)$, the upper bound in Theorem \ref{thm2}  shows that $\hat{v}$ is a consistent estimator of $v_0^*$.

\section{Heteroscedastic case}
\label{sec:hete}

We now study the heteroscedastic setting. Hence, we do not longer require that all the variances are equal. To estimate the signal $v^* \in \mathbb{R}^n$,  we recall the identity 
\[
v_i^* \,=\,  \mathrm{var}(y_i)   \,= \,   \mathbb{E}(y_i^2)  -   \{\mathbb{E}(y_i)\}^2.
\]

\begin{figure}
	\begin{center}
		\includegraphics[width =2.5in,height = 2.3in]{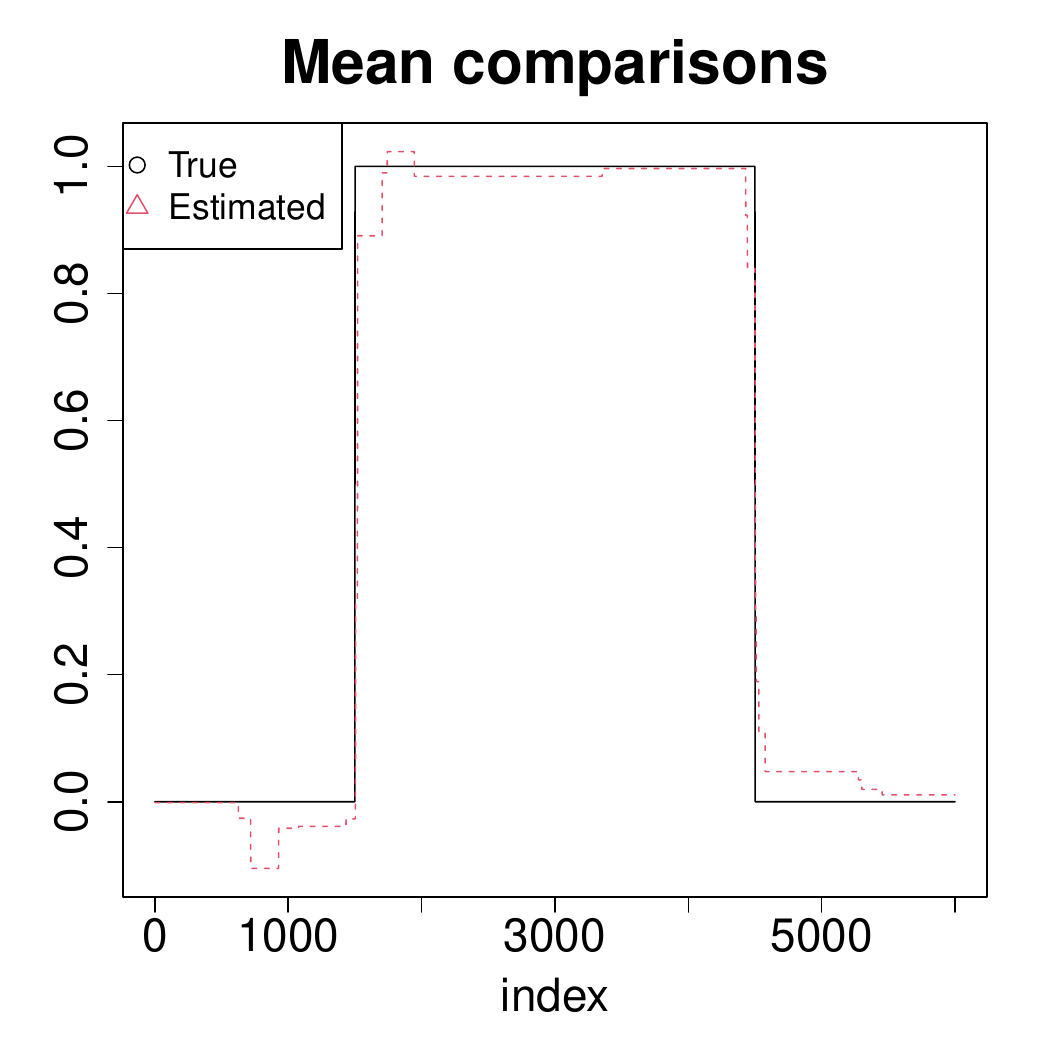}
		\includegraphics[width =2.5in,height = 2.3in]{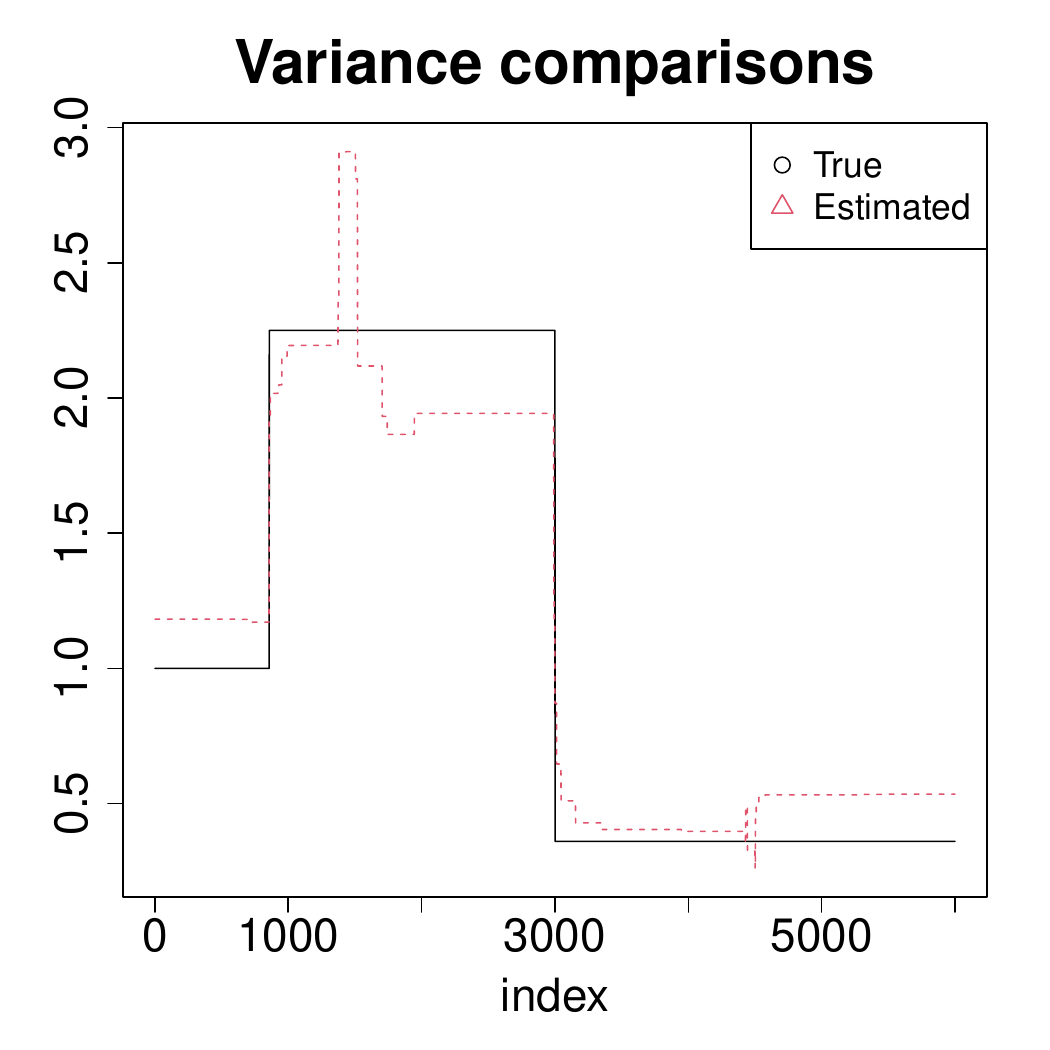}
		\caption{ 		\label{fig1} The left panel shows comparisons of the true and estimated means for Example \ref{ex1} in the text. The right panel shows the corresponding variance comparisons. }
	\end{center}
\end{figure}

Therefore, it is natural to estimate $v^*_i$ with 
\begin{equation}
	\label{eqn:def}
	\hat{v}_i \,=\,  \hat{\gamma}_i - (\hat{\theta}_i)^2,
\end{equation}
where $\hat{\gamma}_i$ is an estimator of $\gamma_i^* := \mathbb{E}(y_i^2)$, and $\hat{\theta}$  is the fused lasso estimator defined in (\ref{eqn:gfl}).  As an estimator for $\gamma^*$, we propose 
\begin{equation}
	\label{eqn:estimator2}
	\hat{\gamma} \,:=\,\underset{\gamma \in  \mathbb{R}^n}{\arg \min}\left\{ \frac{1}{2}\sum_{i=1}^{n}(y_i^2 - \gamma_i)^2   +   \lambda^{\prime}  \sum_{ (i,j)\in E } \vert\gamma_i -\gamma_j\vert   \right\}
\end{equation}
for a tuning parameter $\lambda^{\prime} >0$.

Notice that $\hat{v}$ can be found with the same order of computational cost that it is required for finding $\hat{\theta}$. In practice, this can be done using the algorithm from \cite{chambolle2009total}. As  for parameter tuning, we give details about choosing $\lambda^{\prime}$ in practice in Section \ref{sec:tuning}.

To illustrate the behavior of the estimator defined in (\ref{eqn:def})--(\ref{eqn:estimator2}), we now consider a simple numerical  example. More comprehensive evluations are given in Section \ref{sec:experiments}.

\begin{example}
	\label{ex1}
	We set  $n = 6000$ and generate data according to the model given by (\ref{eqn:model}) with   $\epsilon_i  \overset{\text{ind}}{\sim} N(0,1)$  for $i=1,\ldots,n$, and   $\theta^*,v^* \in  \mathbb{R}^n$ satisfying 
	\[
	\theta_i^* \,=\,\begin{cases}
		1 & \text{if} \,\,\,n/4 < i \leq  3n/4,\\
		0 & \text{otherwise},
	\end{cases}
	\]
	and 
	\[
	v_i^* \,=\,\begin{cases}
		
		1 & \text{if} \,\, i \leq  \lfloor{n/7\rfloor},\\
		1.5^2 & \text{if} \,\,\lfloor{n/7\rfloor}< i \leq  n/2,\\
		0.6^2 & \text{otherwise.}
	\end{cases}
	\]
	Given the data $\{y_i\}_{i=1}^n$, we run the estimator defined in (\ref{eqn:def})--(\ref{eqn:estimator2})   with tuning parameter choices as discussed in Section \ref{sec:tuning}. The results are displayed in Figure \ref{fig1}, where we see that the estimated means and variances are reasonably close to the corresponding true parameters.
\end{example}

\subsection{A general result for fused lasso estimator}
\label{sec:gen}

Before presenting our main result for the estimator  $\hat{v}$ defined in (\ref{eqn:def})--(\ref{eqn:estimator2}), we provide a general upper bound for the fused lasso estimator that holds under very weak assumptions and generalizes existing work in \cite{hutter2016optimal}, \cite{padilla2016dfs} and  \cite{padilla2018adaptive}.

\begin{theorem}
	\label{thm4}
	Consider data $\{o_i \}_{i=1}^n$  generated as $o_i = \beta_i^*+\varepsilon_i$ for some $\beta^*\in  \mathbb{R}^n$ and $\varepsilon_1,\ldots,\varepsilon_n$ independent random variables satisfying  satisfying $\mathbb{E}(\varepsilon_i) = 0$ for $i=1,\ldots,n$, and  $\max_{i=1,\ldots,n} \mathbb{E}(\varepsilon_i^4) =  O(1)$. Let  $\hat{\beta}$ be defined as 
	\begin{equation}
		\label{eqn:glf3}
		\hat{\beta} \,:=\,\underset{\beta \in \mathbb{R}^n}{\arg \min}\left\{ \frac{1}{2}\sum_{i=1}^{n}(o_i - \beta_i)^2   +   \lambda\sum_{ (i,j)\in E } \vert\beta_i -\beta_j\vert   \right\}.
	\end{equation}
	The following results hold: 
	
	\begin{enumerate}
		\item \textbf{General graphs.} For any connected graph $G$, if  for a positive sequence $U_n$ holds that 
		\begin{equation}
			\label{eqn:c1}
			n^{1/2}  U_n^{-1} \{\log (en )\}^{-1/2}\,\underset{i=1,\ldots,n}{\max}   \{\mathrm{pr}(\vert  \varepsilon_i\vert >  U_n) \}^{1/4}\,\rightarrow \, 0,
		\end{equation}
		then 
		\begin{equation}
			\label{eqn:rate1}
			\frac{\|\hat{\beta}-\beta^*\|^2}{n}\,=\, O_{\mathrm{pr}}\left\{\frac{U_n^{4/3}  (\log n)^{1/3} \|\nabla_G\beta^*\|_1^{2/3}    }{n^{2/3}}\,+\, \frac{U_n^2 \log n}{n} \right\},
		\end{equation}
		for a choice of $\lambda$ satisfying $\lambda \,\asymp \,U_n^{4/3}  (n\log n)^{1/3} \|\nabla_G\beta^*\|_1^{-1/3}.$
		\item  \textbf{Grid graphs.} Let  $G$ be the $d$-dimensional grid graph  with $d>1$. Suppose that for a positive sequence $U_n$ we have that 
		\begin{equation}
			\label{eqn:c2}
			\underset{i=1,\ldots,n}{\max}      U_n^{-1}\{  \mathrm{pr}(\vert  \varepsilon_i\vert >  U_n) \}^{1/4} \,\rightarrow \,0.
		\end{equation}
		Then 	there exists a choice of $\lambda$ satisfying 
		\[
		\lambda \,\asymp\,  U_n    \phi_n \,+\,   U_n \|  \nabla_G\beta^*\|_1^{-1/2}
		\]
		such that 
		
		\begin{equation}
			\label{eqn:rate2}
			\frac{\|\hat{\beta}-\beta^*\|^2}{n}\,=\, O_{\mathrm{pr}}\left(\frac{U_n  \phi_n  \|  \nabla_G\beta^*\|_1   }{n}    \,+\,\frac{U_n^2}{n}  \right),
		\end{equation}
		where $\phi_n  = 	C (\log n)^{1/2}  $ if $d=2$ and  $	\phi_n  = C $ otherwise, for some constant $C>0$.		
		for some constant $C>0$. 
		\item \textbf{K-NN graphs.} Suppose that in addition to the measurements $(o_1,\ldots,o_n)^{\top}$ we are also given covariates $\{x_i\}_{i=1}^n \subset \mathcal{X}$, where $x_i $ corresponds to $o_i$, and $\mathcal{X}$ is a metric space with metric $\mathrm{dist}(\cdot)$. Suppose that $\{(x_i,o_i)\}_{i=1}^n$ satisfy the		
		assumptions from \cite{padilla2018adaptive}, see Appendix \ref{sec:assump}. In particular, $\mathcal{X}$ is homeomorphic to $[0,1]^d$. In addition, assume that $K \asymp \log^{1+2r} n$ for some $r >0$ in the construction of the $K$-NN graph $G$, and 
		that for a positive sequence $U_n$ we have that
		\begin{equation}
			\label{eqn:c3}
			n^{1/2}    U_n^{-1} K^{-1/2}\,\underset{i=1,\ldots,n}{\max}  \{  \mathrm{pr}(\vert  \varepsilon_i\vert > U_n) \}^{1/4}\rightarrow  0.
		\end{equation}
		Consider\[
		\lambda\,\asymp\, \| \nabla_G \beta^*\|_1 ^{-1}\left[ (\mathrm{poly}(\log n)n^{1-1/d}  U_n)^{1/2}\,+\,  K^{1/2}  U_n \,+\,  (U_n  K^{1/2}  \mathrm{poly}(\log n)\,n^{1-1/d}\phi_n)^{1/2} \right]^2
		\]
		where $\mathrm{poly}(\cdot)$ is a polynomial function, and $\phi_n$ is defined as in the case of grid graphs above. Then
		\begin{equation}
			\label{eqn:rate3}
			\frac{\|\hat{\beta}-\beta^*\|^2}{n}\,=\, O_{\mathrm{pr}}\left\{ \frac{U_n  \mathrm{poly}_2(\log n)  }{n^{1/d}}\right\},
		\end{equation}
		where $ \mathrm{poly}_2(\cdot)$ is another polynomial function.
	\end{enumerate}
	
\end{theorem}

\begin{remark}
	\label{rem3}
	Let us now elaborate on (\ref{eqn:c1}), (\ref{eqn:c2}) and (\ref{eqn:c3}).   Suppose, for instance, that $\varepsilon_i$ is  sub-Exponential(a), for some constant $a>0$. Then the usual sub-Exponential tail inequality
	can be written as
	\[
	\mathrm{pr}(\vert \varepsilon_i \vert >t) \,\leq \, 2\exp(-t/a),\,\,\,\,  \text{for all }  t>0,
	\]
	see for instance Proposition 2.7.1 in \cite{vershynin2018high}. Hence, taking $U_n =   4a\log n$ it follows that (\ref{eqn:c1}), (\ref{eqn:c2}) and (\ref{eqn:c3}) immediately hold. More generally, if 
	\[
	\mathrm{pr}(\vert \varepsilon_i \vert >t) \,\leq \, c_1\exp(-t^{\alpha}/c_2),\,\,\,\text{for all }  t>0,
	\]
	for positive constants $c_1,c_2,$ and $\alpha$, then taking  $U_n = 4 c_2 (\log n)^{1/\alpha}$, we obtain that (\ref{eqn:c1}), (\ref{eqn:c2}) and (\ref{eqn:c3}) all hold.
\end{remark}

\begin{remark}
	\label{rem4}
	
	Remark \ref{rem3} gives a family of examples where $U_n$ can be taken as a power function of  $\log n $. More generally, if $U_n = O\{\mathrm{poly}(\log n)\}$, for a polynomial function $\mathrm{poly}$, then up to logarithmic factors, Theorem \ref{thm4} gives the same rates as in several existing works on the fused lasso, but now we allow for more general error distributions than sub-Gaussian. Specifically:
	\begin{enumerate}
		\item For a connected graph $G$, (\ref{eqn:rate1}) generalizes the upper bound in Theorem 3 in \cite{padilla2016dfs}. Moreover, the same upper bound in (\ref{eqn:rate1}) also holds if we replace the fused lasso estimator $\hat{\beta}$ (\ref{eqn:glf3}) with the DFS fused lasso estimator from \cite{padilla2016dfs}.
		\item For a $d$-dimensional grid graph $G$, the rate in (\ref{eqn:rate2})  matches that in Corollary 5  from \cite{hutter2016optimal}.
		\item For nonparameteric regression, (\ref{eqn:rate3}) gives the same minimax rate as Theorem 2 in \cite{padilla2018adaptive} for classes of piecewise functions. 
	\end{enumerate}
\end{remark}

\begin{remark}
	\label{remark:challenges}
	As stated before, Theorem \ref{thm4} is the first result for fused lasso in general graph models where the error terms can be non-Sub-Gaussian, yet the estimator still uses the $\ell_2$ loss. The    proof of Theorem \ref{thm4}  relies on Theorem \ref{thm3} 	in Appendix \ref{sec:general_bound}.  The latter basically alllow us to control the quantity  $\mathrm{pr}( \| \hat{\beta} -\beta^*\|  >\eta)   $, for $\eta>0$,  in terms of the process 
	\begin{equation}
		\label{eqn:key_quantity}
		\frac{1}{\eta^2}	\mathbb{E}\left[     \underset{\beta \in  \mathbb{R}^n \,:\, \|\beta - \beta^*\|\leq \eta , \, \| \nabla_G\beta\|_1 \lesssim \|\nabla_G\beta^*\|_1 }{\sup}  \sum_{i=1}^n   \xi_i\varepsilon_i  1_{  \{\vert  \varepsilon_i\vert \leq U_n\}   } (\beta_i - \beta_i^*)    \right]
	\end{equation}
	where  $\xi_1,\ldots,\xi_n$ are independent Rademacher random variables independent of $\{\varepsilon_i\}_{i=1}^n$. This general result holds for arbitrary sequences $U_n$ and it is key given that  the random variables $\xi_i\varepsilon_i  1_{  \{\vert  \varepsilon_i\vert \leq U_n\}   } $  for $i=1,\ldots, n$ are uniformly bounded. Hence, we do not need to control the standard process  
	\[
	\mathbb{E}\left\{    \underset{\beta \in \Lambda   \,:\, \|\beta - \beta^*\|\leq \eta , \, \| \nabla_G\beta\|_1 \lesssim \|\nabla_G\beta^*\|_1 }{\sup}  \varepsilon^{\top}(\beta - \beta^*) \right\}
	\]
	as it is the case in the analysis in   \cite{guntuboyina2020adaptive}, which is only able to handle sub-Gaussian random variables $\varepsilon_i$, $i=1,\ldots,n$. With this challenge overcome, the proof of Theorem \ref{thm3}  continues by controling additional terms that account for the case $\xi_i\varepsilon_i  1_{  \{\vert  \varepsilon_i\vert >U_n\}   } $  for $i=1,\ldots,n$ of the form
	\begin{equation}
		\label{eqn:key_quantity2}
		\frac{ n^{1/2}   \underset{i=1,\ldots,n}{\max}   \{\mathbb{E}( \varepsilon_i^4  )\}^{1/4}      \{  \mathrm{pr}(\vert  \varepsilon_i\vert > U_n) \}^{1/4}}{\eta}.
	\end{equation}
	With Theorem \ref{thm3} in hand, the proof of  Theorem \ref{thm4}  continues by deriving upper bounds for the quantities  (\ref{eqn:key_quantity})  and (\ref{eqn:key_quantity2}). The analysis for (\ref{eqn:key_quantity}) is done customizing for general graphs, grid graphs, and $K$-NN graphs.  
\end{remark}

\subsection{Fused lasso for variance estimation }
\label{sec:variance}

We are now ready to state our main result regarding the estimator $\hat{v}$  defined in (\ref{eqn:def})--(\ref{eqn:estimator2}).  Notably, our result shows that the estimator $\hat{v}$  enjoys similar properties as the original fused lasso in general graphs, $d$-dimensional grids, and $K$-NN graphs. The conclusion of our result  follows from an application of Theorem \ref{thm4} to  $\hat{\theta}$ defined in (\ref{eqn:gfl}) and $\hat{\gamma}$ defined in  (\ref{eqn:estimator2}).

\begin{theorem}
	\label{thm5}
	Consider data $\{y_i\}_{i=1}^n$ generated as in (\ref{eqn:model}) and suppose that $\mathbb{E}(\epsilon_i^8) < \infty$. Then the estimator $\hat{v}$  satisfies the following.
	\begin{itemize}
		\item  \textbf{General graphs.} Let $G$ be any connected graph and assume  that  (\ref{eqn:c1})  holds with $\{\epsilon_i\}_{i=1}^n$ instead of $\{\varepsilon_i\}_{i=1}^n$. Then for choices of $\lambda$ and $\lambda^{\prime} $  satisfying
		\[
		\lambda \,\asymp \,U_n^{4/3}  (n\log n)^{1/3} \|\nabla_G\theta^*\|_1^{-1/3} 
		\]
		and
		$\lambda^{\prime} \asymp  \{\|v^*\|_1^{1/2}\|\theta^*\|_{\infty} U_n   \,+\,  \| v^*\|_{\infty}(1+U_n^2)  \}^{4/3}(n\log n)^{1/3} \|\nabla_G \gamma^*\|_1^{-1/3}$, 
		we have that 
		\begin{equation}
			\label{eqn:rate4}
			\begin{array}{lll}
				\displaystyle 	\frac{1}{n}\|\hat{v} -v^*\|^2 &\,=\,&	\displaystyle  O_{\mathrm{pr}}\bigg\{\frac{ (\|\theta^*\|_{\infty}^2+1)(U_n^{\prime})^{4/3}  (\log n)^{1/3} (\|\nabla_Gv^*\|_1+ \|\theta^*\|_{\infty}\|\nabla_G\theta^*\|_1)^{2/3}    }{n^{2/3}}\,+\,\\
				& & \displaystyle \,\,\,\,\,\,\,\,\,\,\,\frac{(\|\theta^*\|_{\infty}^2+1)(U_n^{\prime})^2 \log n}{n} \bigg\}
			\end{array}
		\end{equation}
		where 
		\begin{equation}
			\label{eqn:uprime}
			U_n^{\prime}\,:=\, (2\|  v^*\|_{\infty}^{1/2}\|\theta^*\|_{\infty}+1) U_n+\, \|  v^*\|_{\infty}U_n^2 \,+\, \| v^*\|_{\infty}.
		\end{equation}
		\item   \textbf{Grid graphs.} Let  $G$ be the $d$-dimensional grid graph  with $d>1$.  Suppose that  the sequence $\{\epsilon_i\}_{i=1}^n$ satisfies (\ref{eqn:c2}).
		Then   there exists tuning parameter choices satisfying 
		\[
		\lambda \,\asymp\,  U_n    \phi_n \,+\,   U_n \|  \nabla_G\theta^*\|_1^{-1/2},\,\,\,\text{and}\,\,\,\lambda^{\prime} \,\asymp\,  U_n^{\prime}    \phi_n \,+\,   U_n^{\prime} \|  \nabla_G\gamma^*\|_1^{-1/2}
		\]
		for which 
		\begin{equation}
			\label{eqn:rate5}
			\frac{\|\hat{v}-v^*\|^2}{n}\,=\, O_{\mathrm{pr}}\left\{\frac{( \|\theta^*\|_{\infty}^2+1)U_n^{\prime}\phi_n \left( \|  \nabla_G v^*\|_1  +  \|\theta^*\|_{\infty} \|  \nabla_G\theta^*\|_1 \right)  }{n}    \,+\,\frac{ ( \|\theta^*\|_{\infty}^2+1)(U_n^{\prime})^2}{n}  \right\},
		\end{equation}
		with $U^{\prime}_n$ as in (\ref{eqn:uprime}) and $\phi_n$ as in  Theorem \ref{thm4}.

		\item  \textbf{$K$-NN graphs.} Suppose that in addition to the measurements $(y_1,\ldots,y_n)^{\top}$ we are also given covariates $\{x_i\}_{i=1}^n \subset \mathcal{X}$, where $x_i $ corresponds to $y_i$, and $\mathcal{X}$ is a metric space with metric $\mathrm{dist}(\cdot)$. Suppose that $\{(x_i,y_i)\}_{i=1}^n$ satisfy the		
		assumptions from \cite{padilla2018adaptive} stated in Appendix \ref{sec:assump}.  In addition, assume that $K \asymp \log^{1+2r} n$ for some $r >0$ in the construction of the $K$-NN graph $G$, and  (\ref{eqn:c3}) holds for $\{\epsilon_i\}_{i=1}^n$. 
		Then for  choices of $\lambda$ and  $\lambda^{\prime}$ satisfying 
		\[
		\lambda\,\asymp\, \| \nabla_G \theta^*\|_1 ^{-1}\left[ (\mathrm{poly}(\log n)n^{1-1/d}  U_n)^{1/2}\,+\,  K^{1/2}  U_n \,+\,  (U_n  K^{1/2}  \mathrm{poly}(\log n)\,n^{1-1/d}\phi_n)^{1/2} \right]^2
		\]
		and 
		\[
		\lambda^{\prime }\,\asymp\,  \| \nabla_G \gamma^*\|_1 ^{-1}\left[ (\mathrm{poly}(\log n)n^{1-1/d}  U_n^{\prime})^{1/2}\,+\,  K^{1/2}  U_n^{\prime} \,+\,  (U_n^{\prime}  K^{1/2}  \mathrm{poly}(\log n)\,n^{1-1/d}\phi_n)^{1/2} \right]^2
		\]
		for a polynomial function $\mathrm{poly}(\cdot)$, it holds that 
		\begin{equation}
			\label{eqn:rate6}
			\frac{\|\hat{v}-v^*\|^2}{n}\,=\, O_{\mathrm{pr}}\left\{ \frac{ (\|\theta^*\|_{\infty}^2+1)U_n^{\prime} \phi_n  \log^{1+2r} n  }{n^{1/d}}\right\},
		\end{equation}
		with $U^{\prime}_n$ as in (\ref{eqn:uprime}), and with $\phi_n$ as in the previous case of grid graphs.
	\end{itemize}
\end{theorem}

\begin{remark}
	\label{rem5}
	Consider  the setting in which $\max\{  \|\theta^*\|_{\infty}, \|v^*\|_{\infty} \} = O(1)$,  and $U_n = O\{\mathrm{poly}(\log n)\}$, for  $\mathrm{poly}(\cdot)$ a polynomial function. Then, ignoring logarithmic factors, Theorem \ref{thm5} implies the following:
	\begin{enumerate}
		\item For a connected graph $G$ the estimator $\hat{v}$ satisfies 
		\[
		\frac{\| \hat{v} -v^*\|^2}{n} \,=\, O_{\mathrm{pr}}\left\{    \frac{  ( \|\nabla_G \theta^* \|_1 + \|\nabla_G v^* \|_1  )^{2/3} }{n^{2/3}}  \right\}.
		\]
		Hence, for the chain graph and the canonical setting in which  $\max\{\|\nabla_G \theta^* \|_1 , \|\nabla_G v^* \|_1 \} = O(1)$,  the estimator $\hat{v}$ attains  the rate $n^{-2/3}$, which is minimax optimal in the class
		\[
		\{  (v,\theta)\,:\,    \max\{\|\nabla_G \theta^* \|_1 , \|\nabla_G v^* \|_1 \} \,\leq \, C_1,  \, \max\{\|\theta^* \|_{\infty} , \|v^* \|_{\infty} \} \,\leq \, C_1\}
		\]
		for some constants $C_1,C_2>0$, 		see Theorem 4  in \cite{shen2020optimal}.
		
		\item If $d>1$, then for the $d$-dimensional grid graph, we obtain that 
		\[
		\frac{\| \hat{v} -v^*\|^2}{n} \,=\, O_{\mathrm{pr}}\left(    n^{-1/d}  \right),
		\]
		under the canonical scaling, (\cite{sadhanala2016total}, see also Appendix \ref{sec:canonical})  \\
		$\| \nabla_G \theta^*\|_1,  \|\nabla_G v^*\|_1 \,\asymp \, n^{1-1/d}$. 	 Hence, from Lemma \ref{lem:lower1} below, for estimating $v^*$, $\hat{v}$ attains  the minimax rate under the canonical scaling.

		\item For the $K$-NN graph, $\hat{v}$ also attains the rate $n^{-1/d}$ for estimating piecewise Lipschitz functions, thereby maintaining the same adaptivity properties of $\hat{\theta}$ studied in \cite{padilla2018adaptive}. Moreover, from Lemma \ref{lem:lower2},  the rate $n^{-1/d}$ matches the minimax rate for estimating the signal of variances when this is constructed based on the evaluations of a piecewise Lipschitz function. 
	\end{enumerate}
	
\end{remark}

\begin{remark}
	\label{rem6}
	Given that our proposed estimator defined  in (\ref{eqn:def}) is based on estimating $ \gamma_i^* =  \mathbb{E}(  y_i^*) $ and $\theta_i^* =  \mathbb{E}(y_i)$ for $i = 1,\ldots, n$, the first step in the proof of Theorem \ref{thm5}  is to establish Lemma    \ref{lem1}  which states that the total variation of  $\gamma^*$  is bounded by the total variation of the variance signal $v^*$ and the total variation of the mean signal $\theta^*$:
	\[
	\|\nabla_ G   \gamma^* \|_1 \,\lesssim   	\|\nabla_ G   v^* \|_1    \,+\, 	\|\nabla_ G   \theta^* \|_1.
	\]
	Thus, if  $v^*$ and $\theta^*$  both have small total variation along the graph $G$, then the same can be said about the signal $\gamma^*$, which justifies our construction in   (\ref{eqn:estimator2}).  Then the proof of Theorem \ref{thm5} continues by showing that  
	\[
	\frac{1}{n}\|\hat{v} -v^*\|^2    \lesssim \frac{1}{n}\sum_{i=1}^{n}\big(\hat{\gamma}_i - \gamma^*_i \big)^2   \,+\, \frac{1}{n}\sum_{i=1}^{n}   \big(   \hat{\theta}_i-\theta^*_i\big)^2, 
	\]
	and then applying Theorem \ref{thm4} separately with the choices $\beta^* = \theta^* $ and $\beta^* = \gamma^*$. The latter has an additional small challenge, addressed in Lemma \ref{lem2},  concerning the behavior of the tails of the random variables $y_i^2$.
	
\end{remark}


Next we justify  second conclusion in Remark \ref{rem5} concerning the minimax optimality of $\hat{v}$  under canonical scaling. This is presented in the next lemma. 

\begin{lemma}
	\label{lem:lower1}
	Let  $G$ be the $d$-dimensional grid graph and $c \in (0, 1)$ a constant and  let
	\[
	K\,=\, \{   \theta \in \mathbb{R}^n \,:\,   \| \nabla_G\theta\|_1 \leq  c n^{1-1/d} , \,\,\|\theta\|_{\infty} \leq  c\}.
	\]
	Consider the collection of estimators given as
	\[
	\begin{array}{lll}
		\mathcal{F}& \,:=\, &\big\{    v \,:\,\mathbb{R}^n \rightarrow \mathbb{R}^n\,\text{measurable}\big\}.
	\end{array}
	\]
	Then there exists a constant $C >0$ depending on $c$ and $d$ such that 
	\[
	\underset{ \tilde{v} \in \mathcal{F} }{\inf}\,\,\,\underset{ \theta^*,v^* \in K,\,   v_i^* \in (\frac{c^2}{8} ,\frac{3c^2}{8})  }{\sup} \,\mathbb{E}\left(   \frac{1}{n}\|   \tilde{v}(y) - v^*  \|^2  \right) \,\geq \,   \frac{ C    }{n^{1/d}  },
	\]
	for data generated as $y_i =  \theta^*_i   +  \sqrt{v^*_i} \epsilon_i $, with $\epsilon_i  \overset{\text{ind} } {\sim}N(0,1)$,   for $i=1,\ldots,n$.
\end{lemma}

Finally, we conclude our theory section with a lower bound that justifies our assertion that $\hat{v}$ is minimax optimal when using a $K$-NN graph for estimating a piecewise Lipschitz signal. 

\begin{lemma}
	\label{lem:lower2}
	Consider the class of piecewise Lipschitz  functions $\mathcal{F}(L_0)$,  defined in Appendix \ref{sec:assump}, for a constant $L_0  \in (0,1)$.  Suppose that, for functions $f_0, g_0 \in \mathcal{F}$ with $g_0 \geq 0$,  the data are generated as
	\[
	y_i  \,=\,   f_0(x_i)   \,+\,  \sqrt{g_0(x_i)}\epsilon_i,  \,\,\,  
	\]
	where $\epsilon_i \overset{\text{ind} } {\sim}N(0,1)$ and $x_i  \overset{\text{ind} } {\sim} U[0,1]^d$,  for $i=1,$\ldots$,n$. Then for a constant $C>0$ depending on $L_0$, we have that  
	\[
	\underset{ \tilde{g} \,\text{estimator}  }{\inf}\,\,\,\underset{  f_0 , g_0 \in \mathcal{F}(L_0)  \, }{\sup} \,\mathbb{E}\left(   \|   \tilde{g} - g_0  \|^2_2  \right) \,\geq \,   \frac{ C }{n^{1/d}  }.
	\]
	
\end{lemma}

\section{Experiments}
\label{sec:experiments}

\subsection{Heteroscedastic estimator:  Tuning parameters}
\label{sec:tuning}


We now discuss how to choose the tuning parameters for the estimator $\hat{v}$ defined in (\ref{eqn:def})--(\ref{eqn:estimator2}).  Let $\hat{\theta}(\lambda)$ and  $\hat{v}(  \lambda^{\prime}) $ the estimates based on  choices  $\lambda$ and $\lambda^{\prime}$. Notice that $\hat{v}(\lambda^{\prime})$ depends on $\lambda$ but we do not make this dependence explicit to avoid overloading the notation. 

To choose $\lambda$, inspired by \cite{tibshirani2012degrees}, we use a Bayesian information criterion given as 
\begin{equation}
	\label{eqn:bic_mean}
	\widehat{\mathrm{BIC}}(\lambda)\,: =\,    \|y -\hat{\theta}(\lambda)\|^2  +  \widehat{\mathrm{df}}(\lambda) \log n   
\end{equation}
where  $\widehat{\mathrm{df}}(\lambda) $ is the number of connected components induced by $\hat{\theta}(\lambda)$ in the graph $G$. Then we select the value of $\lambda$ that minimizes  $\widehat{\mathrm{BIC}}(\lambda)$.

Once $\hat{\theta}(\lambda)$ has been computed, we proceed to select $\lambda^{\prime } $ for (\ref{eqn:estimator2}).  We let $\hat{\gamma}(\lambda^{\prime})  $  the solution to (\ref{eqn:estimator2}) and 
$\widetilde{\mathrm{df}}(\lambda^{\prime}) $ be the number of connected components in $G$ induced by $\hat{\gamma}(\lambda^{\prime})$.  Then we define 
\begin{equation}
	\label{eqn:score}
	\widetilde{\mathrm{BIC}}(\lambda^{\prime })\,: =\,    \sum_{i=1}^n  [ \min\{q, y_i^2 \}  - \hat{\gamma}(\lambda^{\prime})_i  ]^2 + \widetilde{\mathrm{df}}(\lambda^{\prime}) \log n 
\end{equation}
where $q$ is the $0.95$-quantile of the data $\{y_i^2\}_{i=1}^n$. We use $\min\{q, y_i^2 \} $ in (\ref{eqn:score}) to avoid the influence of outliers in the model selection step.  With the above score in hand, we choose the value of $\lambda^{\prime }$ that minimizes 
$\widetilde{\mathrm{BIC}}(\lambda^{\prime })$. In all our experiments, we select $\lambda$ and $\lambda^{\prime}$ from the set $\{10^1,10^2,10^3,10^4,10^5\}$.

\subsection{Homoscedastic case simulations}
\label{sec:homos}

We start by  considering settings where the variance, denoted as $v_0^*$, is constant across the different nodes $i$. As benchmarks, we consider the estimator  defined in (\ref{eqn:dfs_est})  which we refer as homoscedastic estimator  (Hom.), the heteroscedastic estimator (Het.) defined  (\ref{eqn:def})--(\ref{eqn:estimator2}), and  the U-statistic based local polynomial estimator  defined in \cite{shen2020optimal}   (U-LP).  


For our comparisons,  we generate data from the model in (\ref{eqn:model}) with 
$\epsilon_i \overset{\text{ind}}{\sim }   N(0,1)$ and $v_i^* = v_0^*$ for $i=1,\ldots,n$. We consider $2$-dimensional grid graphs $G $ with $n \in \{ 100^2,200^2, 300^2, 400^2\}$, and we identify the nodes of $G$ with elements of the set $\{1,\ldots\,n^{1/2}\} \times \{1,\ldots,n^{1/2} \}$. Then we consider values of $v_0^* $ in $\{0.5,1,1.5,2\}$ and three different scenarios for the signal $\theta^*$.  Next, we describe the choices of $\theta^*$ that we consider.

\textit{Scenario 1.} For $k,l \in \{ 1,\ldots, n^{1/2}\}$, we let 
\[
\theta^*_{k,l} \,=\, \begin{cases}
	1 & \text{if}\,\,\, \vert k - n/2\vert < n/4 ,\,\,\text{and}\,\,\vert l- n/2\vert < n/8,\\
	0 & \text{otherwise.}
\end{cases}
\]

\textit{Scenario 2.} We set 
\[
\theta^*_{k,l} \,=\, \begin{cases}
	1 & \text{if}\,\,\, (k-n/4)^2 + (l-n/4)^2< (n/5)^2,\\
	0 & \text{otherwise.}
\end{cases}
\]

\textit{Scenario 3.} In this scenario we set
\[
\theta^*_{k,l} \,=\, \begin{cases}
	1 & \text{if}\,\,\,    i < n/2   \,\,\text{and}\,\, j <n/2 ,\\
	0 & \text{otherwise.}
\end{cases}
\]

\begin{table}[tbp]
	\centering
	\setlength{\tabcolsep}{10.7pt}
	\begin{scriptsize}
		\begin{tabular}{|cc|ccc|ccc|ccc|}
			\toprule
			&	& \multicolumn{3}{c}{Scenario 1} \vline  & \multicolumn{3}{c}{Scenario 2} \vline & \multicolumn{3}{c}{Scenario 3} \vline \\
			\hline
			$n$ & $v_0$ & U-LP & Hom. & Het. & U-LP & Hom.& Het. & U-LP & Hom.& Het.\\
			\midrule
			$100^2$  &  0.25  & 0.33 &  \textbf{0.26} &1.15      & 0.17& \textbf{0.16}& 0.17  &0.43 &\textbf{0.28} &2.25\\ 
			$200^2$  &  0.25  & 0.14 &  \textbf{0.12} &1.121 & 0.21& \textbf{0.10}&  0.11 &0.39 &\textbf{0.11} &0.94\\ 
			$300^2$  &  0.25  & 0.15 &\textbf{0.08}& 0.97 &0.19 & \textbf{0.09}& \textbf{0.09}  & 0.44& \textbf{0.08}&0.47\\ 
			$400^2$  &  0.25  &0.14  &\textbf{0.07} &0.70 &0.22 &0.09 &  \textbf{0.08} & 0.49&\textbf{0.06}&0.28\\ 
			\midrule
			$100^2$  &  0.5  &     1.12      &  \textbf{1.10}     &1.11 &1.13 & \textbf{1.11} & 1.24&5.22&\textbf{1.12}&2.65\\ 
			$200^2$  &  0.5  &     0.52    & \textbf{0.44}     & 1.23& 1.21&  \textbf{0.62}& 1.34&4.99&\textbf{0.44}&0.94\\  
			$300^2$  &  0.5  &     0.62     & \textbf{0.32}      & 0.97& 1.19&  \textbf{0.34}&1.15 &4.84&\textbf{0.36}&0.48\\ 
			$400^2$  &  0.5  &    0.61       & \textbf{0.20}       &0.70 &1.38  &  \textbf{0.25}& 0.91&4.96 &\textbf{0.23} &0.29\\ 
			\midrule
			$100^2$  &  0.75  &  2.79&2.72 &\textbf{1.24} &1.36 &2.40 &\textbf{1.23}  &5.06 &3.06 &  \textbf{2.52}\\ 
			$200^2$  &  0.75  &1.20  &1.19 &\textbf{1.18} &1.31 &\textbf{1.27} & 1.40 &  4.89 &1.49 & \textbf{0.91}\\ 
			$300^2$  &  0.75  & 0.69 &\textbf{0.68} & 1.01& 1.43&\textbf{0.78} & 1.24  &4.76 &0.78 & \textbf{0.48}\\ 
			$400^2$  &  0.75  & 0.68 & \textbf{0.55}&0.69& 1.33&\textbf{0.58} &  0.92& 5.30&0.59 &  \textbf{0.30}\\ 
			\midrule
			$100^2$  &  1.0  &3.42  &3.94 & \textbf{1.18}&2.73 & 2.63 & \textbf{1.28} &5.46 &3.76 &\textbf{2.60} \\ 
			$200^2$  &  1.0  & 2.31 & 2.22&\textbf{1.28} & 1.58& 2.23&  \textbf{1.38} & 4.81& 2.03&\textbf{0.96}\\ 
			$300^2$  &  1.0  & 0.76 &\textbf{0.65} &0.94 &1.35 &\textbf{0.94} &1.22 &.4.94 & 1.06& \textbf{0.51}\\ 
			$400^2$  &  1.0  &0.57  &0.47 & \textbf{0.74}& 1.29& \textbf{0.89}&0.95 &4.81 &1.03 &\textbf{0.29}  \\  
			\bottomrule
		\end{tabular}
	\end{scriptsize}
	\caption{
		\label{tab:ex1}  Performance evaluations of  the competing methods  for the different settings described in the text. We report  $100$ multiplied by the average
		mean squared error,  averaging over 200 Monte Carlo simulations.
	}
\end{table}

For each scenario and value of the model  parameters, we generate 200 data sets and, for each data set, compute the different estimators.  We compute the  Hom. estimator  with a random DFS, and  the Het. estimator with tuning parameters chosen as in  Section \ref{sec:tuning}.   As for the U-LP estimator,  we follow the construction in  Section 4.1 from \cite{shen2020optimal}. First, we identiy the nodes of the $2$-dimensional grid graph with elements of the interval $[0,1]^2$, such that $(i,j)$ in the grid graph corresponds to $X_{  (j-1)n^{1/2}+ i    } := (i/n^{1/2},j/n^{1/2})  \in [0,1]^2 $ for $(i,j)   \in    \{ 1\ldots, n^{1/2} \}\times \{ 1,\ldots, n^{1/2}\}$.  We also let  $Y_{(j-1)n^{1/2}+ i   } =  y_{ (i,j) }$ where $y_{(i,j)}$ is the observation associated with $(i,j)$ in the 2-dimensional grid graph.  	Then we recall that estimator in \cite{shen2020optimal} in this context becomes 

\begin{equation}
	\label{eqn:estimator_shen}
	\hat{v}_{ \mathrm{original}}\,=\,	\frac{   {n\choose 2}^{-1}     \sum_{   k< \ell }   K_{h_1}( X_{k,1}  - X_{\ell,1}  )   \cdot K_{h_2}( X_{k,2}  - X_{\ell,2}  ) (Y_k -  Y_{\ell})^2  }{{n\choose 2}^{-1}     \sum_{   k< \ell }   K_{h_1}( X_{k,1}  - X_{\ell,1}  )   \cdot K_{h_2}( X_{k,2}  - X_{\ell,2}  ) },
\end{equation}
where $K  \,:\, \mathbb{R} \rightarrow \mathbb{R}$ is a kernel, $h_1, h_2>0$ are bandwidths, and $K_h(\cdot ) :=   K(\cdot /h)/h$.  Notice that computing $\hat{v}_{\mathrm{original}}$ involves $O(n^2)$ which quickly becomes intractable. Hence, we approximate  (\ref{eqn:estimator_shen}) with 
\begin{equation}
	\label{eqn:estimator_shen2}
	\hat{v}_{ \mathrm{approx}}\,=\,	\frac{  N^{-1}     \sum_{   s=1}^N   K_{h_1}( X_{k_s,1}  - X_{\ell_s,1}  )   \cdot K_{h_2}( X_{k_s,2}  - X_{\ell_s,2}  ) (Y_{k_s} -  Y_{\ell_s })^2  }{N^{-1}  \sum_{  s=1}^N   K_{h_1}( X_{k_s,1}  - X_{\ell_s,1}  )   \cdot K_{h_2}( X_{k_s,2}  - X_{\ell_s,2}  ) },
\end{equation}
where    $(k_1,\ell_1),\ldots, (k_N,\ell_N)$ are independent draws from the uniform distribution in $\{1,\ldots, n\}\times \{1,\ldots,n\}$.    The resulting estimator is the one that we consider as competitor in representation of the method from \cite{shen2020optimal}. In our simulations, we set $N= 5000$, $K$ is the Gaussian kernel, and $h_1 = h_2 = h$. We  allow $h \in \{ 2^{-10}, 2^{-9},\ldots, 2^{-1} \}$  and report results for the choice of $h$  that gives the best performance in terms of estimating the true parameter $v_0^*$.


We use the mean squared error as a measure of performance for the different estimators. For the methods Hom. and U-LP which only compute a single estimator, denoting the output of the method as $\hat{v} \in \mathbb{R}$, we  compute the average of $\vert \hat{v} -v_0^*\vert^2$ across the 200 replicates. For the method Het. that produces a vector $\hat{v} \in \mathbb{R}^n$, we compute the average of
\begin{equation}
	\label{eqn:mse}
	\frac{1}{n} \sum_{i=1}^{n}   (\hat{v}_i -  v_0^*)^2
\end{equation}
over the 200 Monte Carlo simulations. The results can be seen in Table \ref{tab:ex1}, where observe that our proposed estimators  Hom.  and Het. outperform  the competitor in all of the instances cinsidered. This does not come as a surprise since  the true mean in each scenario is piecewise constant, making it challenging for the kernel based method from \cite{shen2020optimal}, while both of our proposed methods are better suited for handling  piecewise constant signals for both the mean and variance vectors.

\subsection{Heteroscedastic case:  2D Grid graphs}
\label{sec:2d}

In our next experiment, we consider generative models where the true graph is a 2D grid graph of size $n^{1/2} \times n^{1/2}$. We generate data similarly to  Section \ref{sec:homos} with the difference that the variance is now not constant. Specifically, the data are generated as 
\[
y_i \,=\,   \theta^*_i  \,+\, \sqrt{ v_i^* }\epsilon_i
\]
with  $\epsilon_i \overset{ind} {\sim} N(0,1)$.   The scenarios we consider are:

\textit{Scenario 4.} The signal $\theta^*$ is taken as in Scenario 3 from Section \ref{sec:homos}, and we let
\[
v^*_{k,l} \,=\, \begin{cases}
	1.75 & \text{if}\,\,\, \vert k - n/2\vert < n/3 ,\,\,\text{and}\,\,\vert l- n/2\vert < n/3,\\
	1 & \text{otherwise,}
\end{cases}
\]
for $k,l \in \{ 1,\ldots, n^{1/2}\}$.

\begin{figure}
	\begin{center}
		\includegraphics[width =1.95in,height = 1.95in]{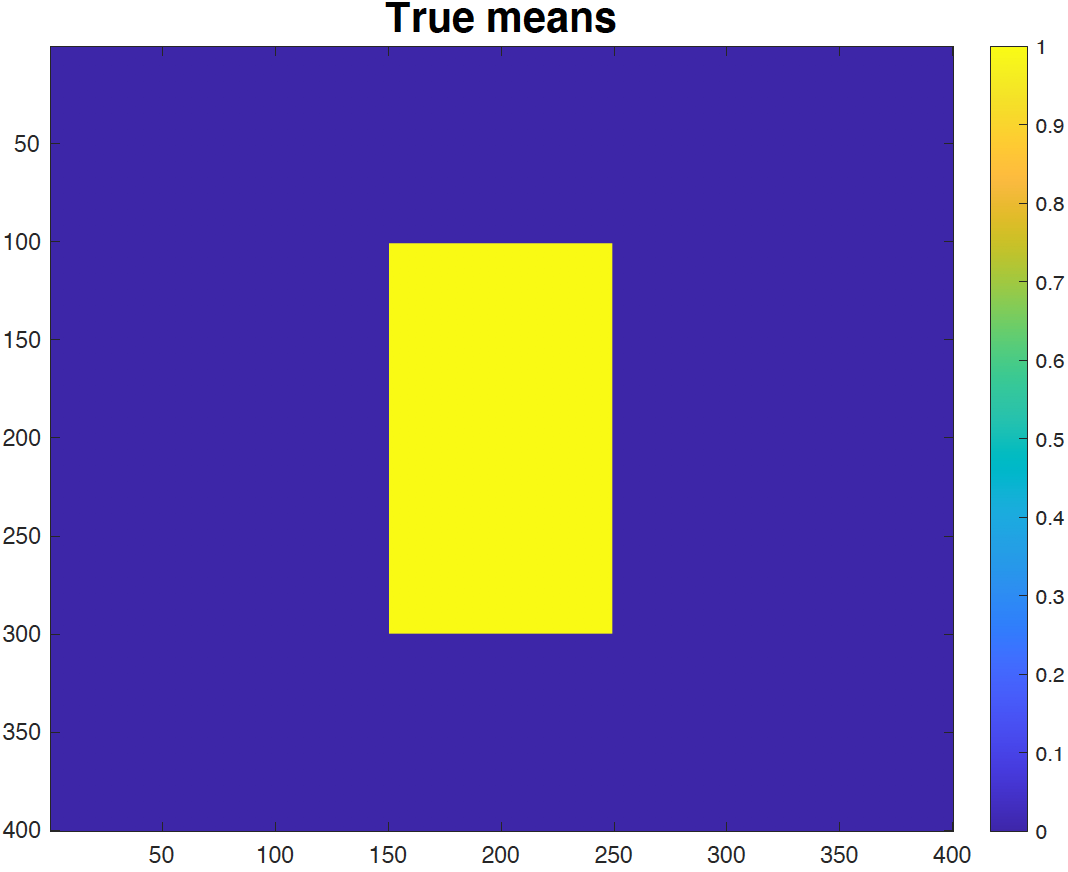}
		\includegraphics[width =1.95in,height = 1.95in]{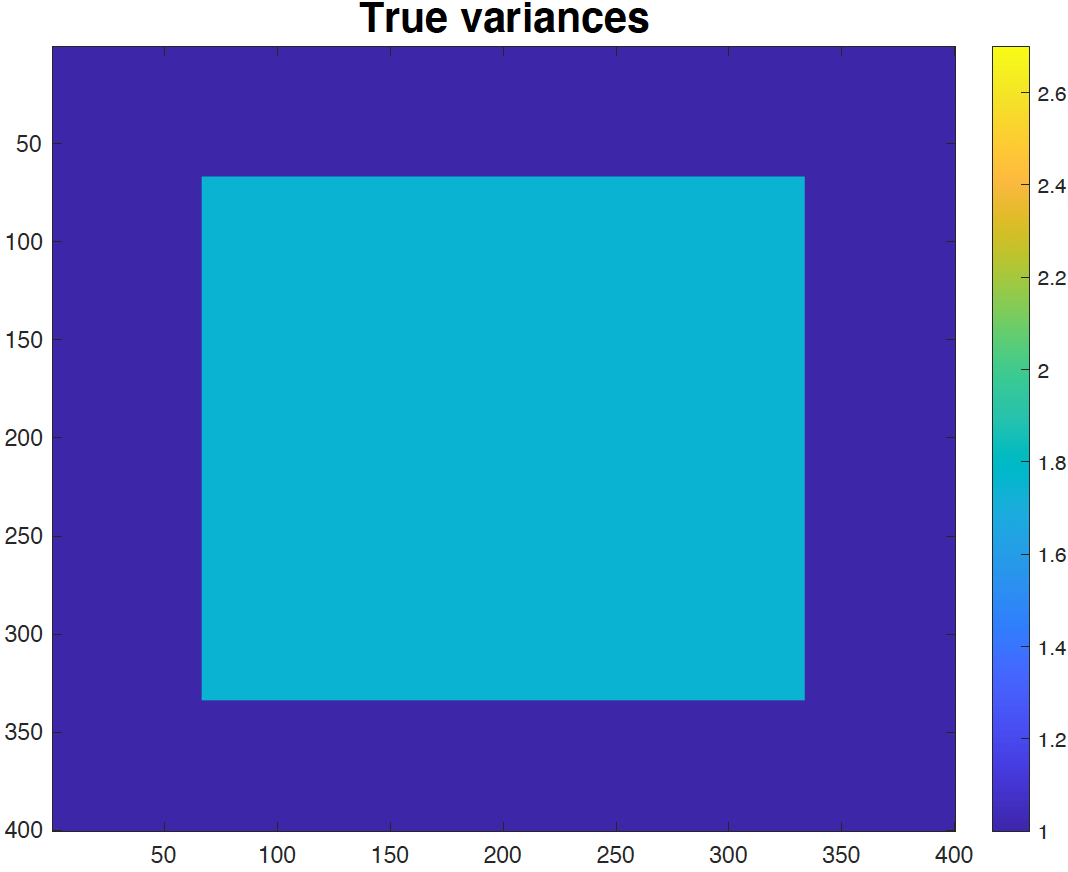}
		\includegraphics[width =1.95in,height = 1.95in]{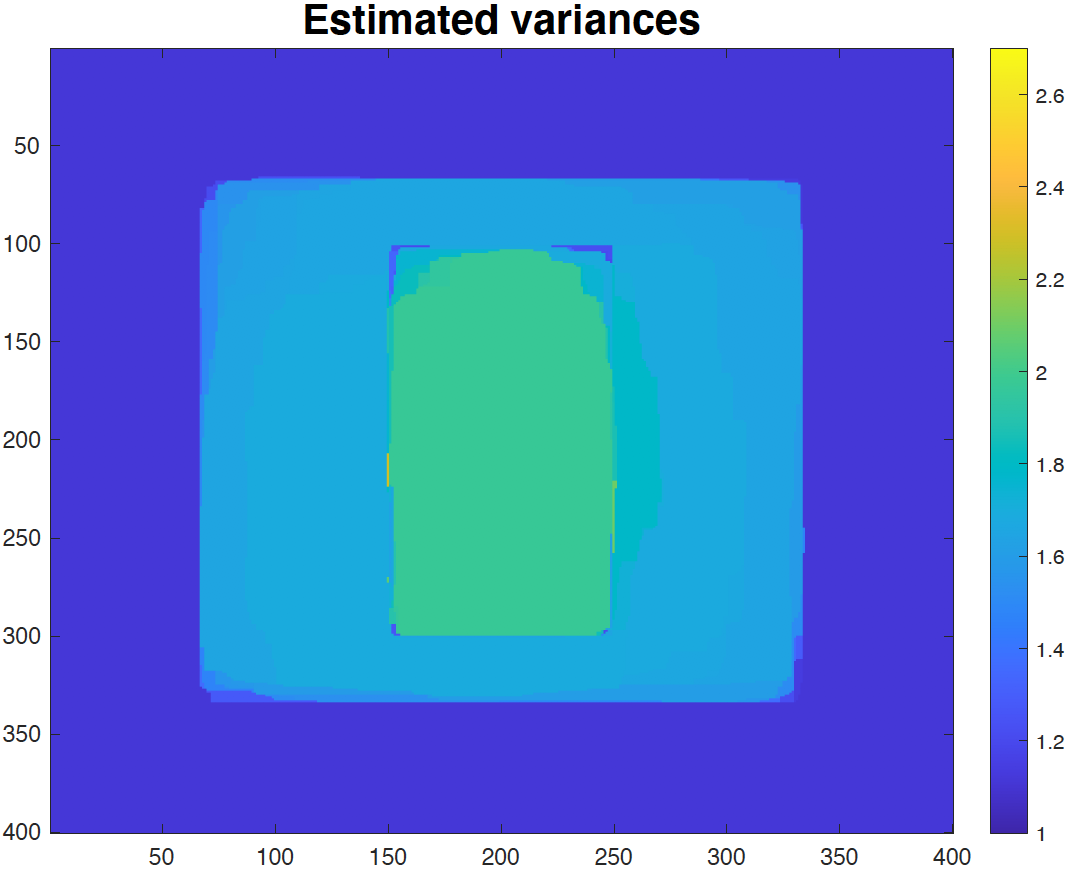}
		\includegraphics[width =1.95in,height = 1.95in]{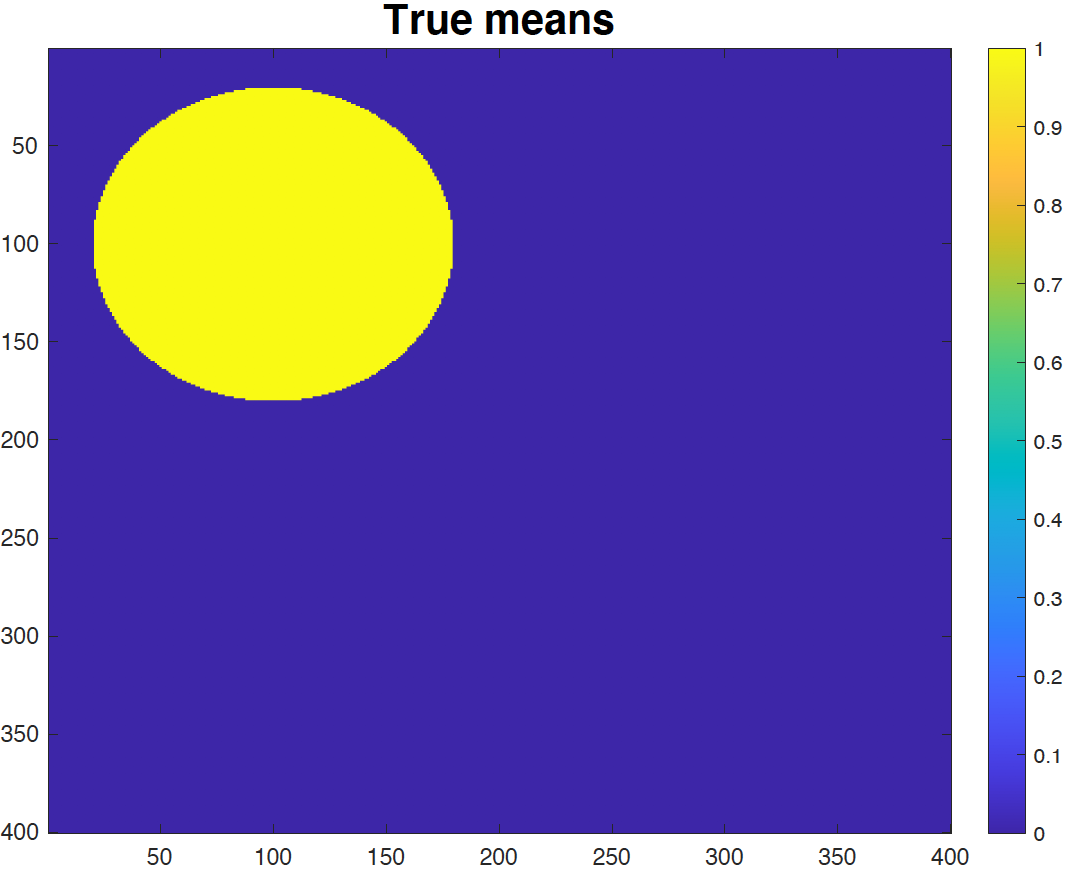}
		\includegraphics[width =1.95in,height = 1.95in]{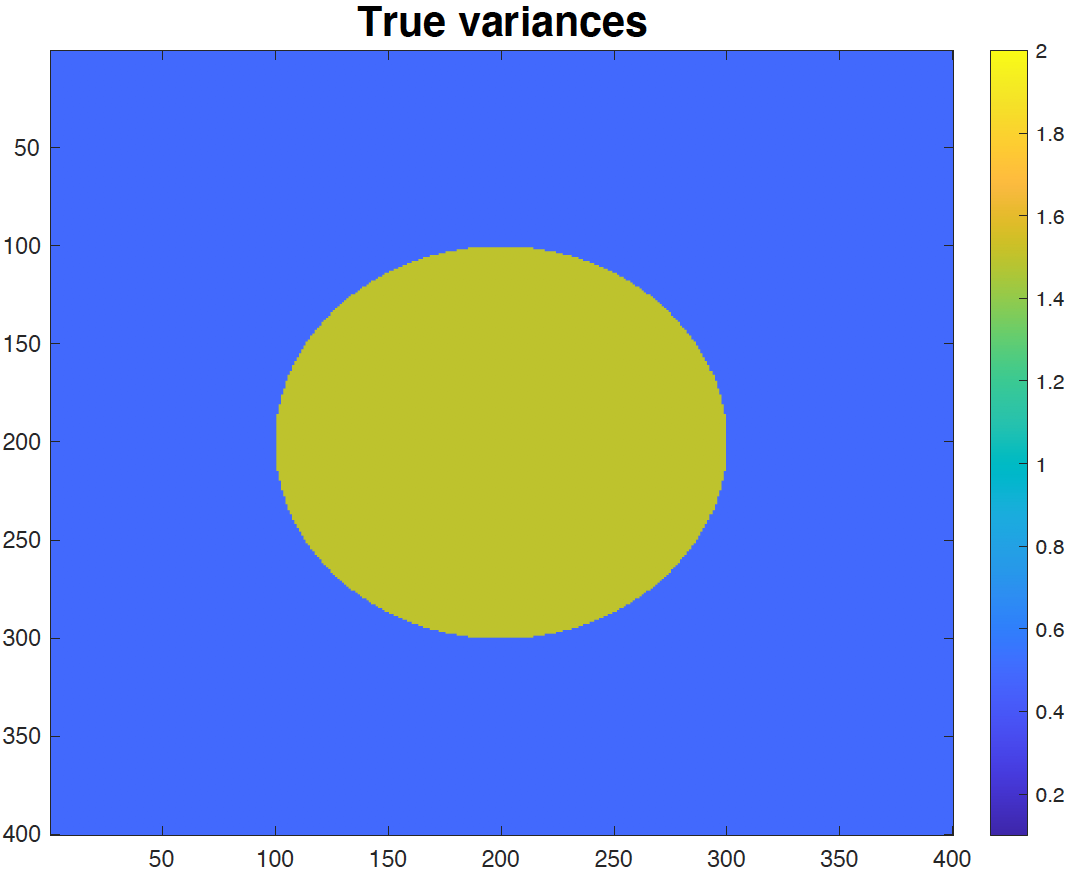}
		\includegraphics[width =1.95in,height = 1.95in]{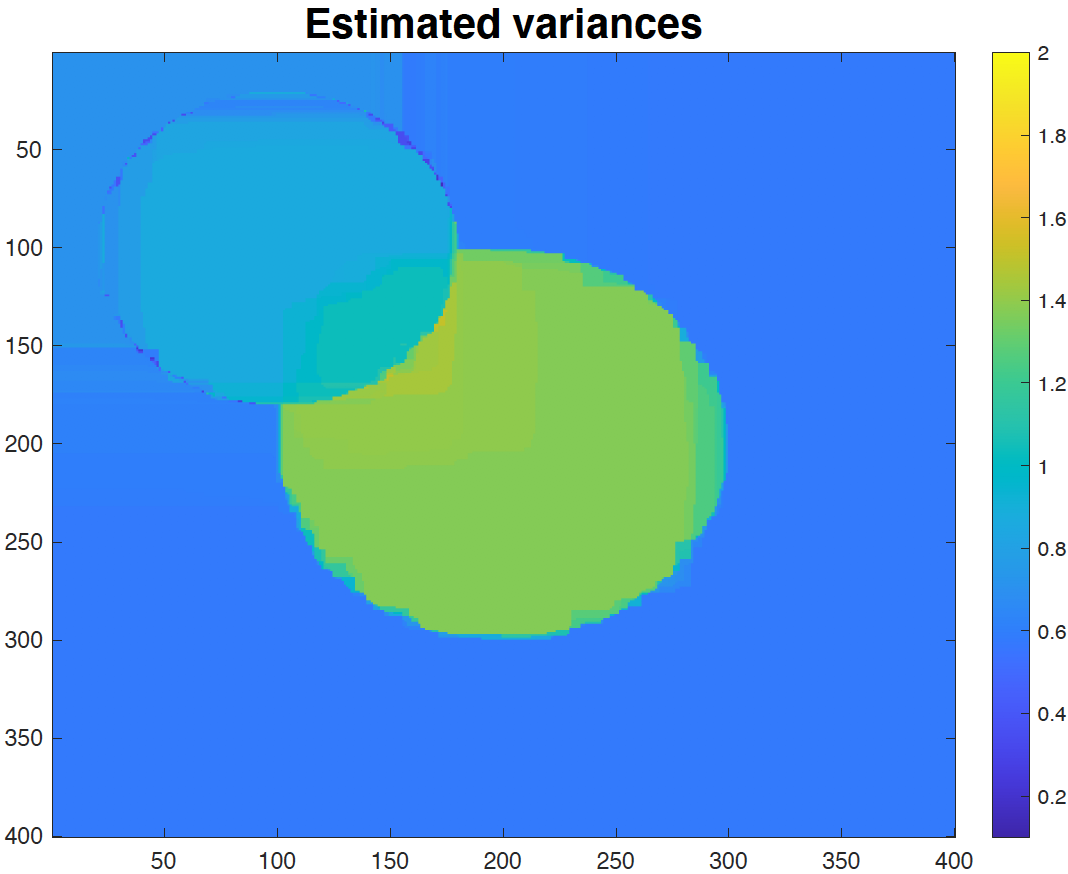}
		\includegraphics[width =1.95in,height = 1.95in]{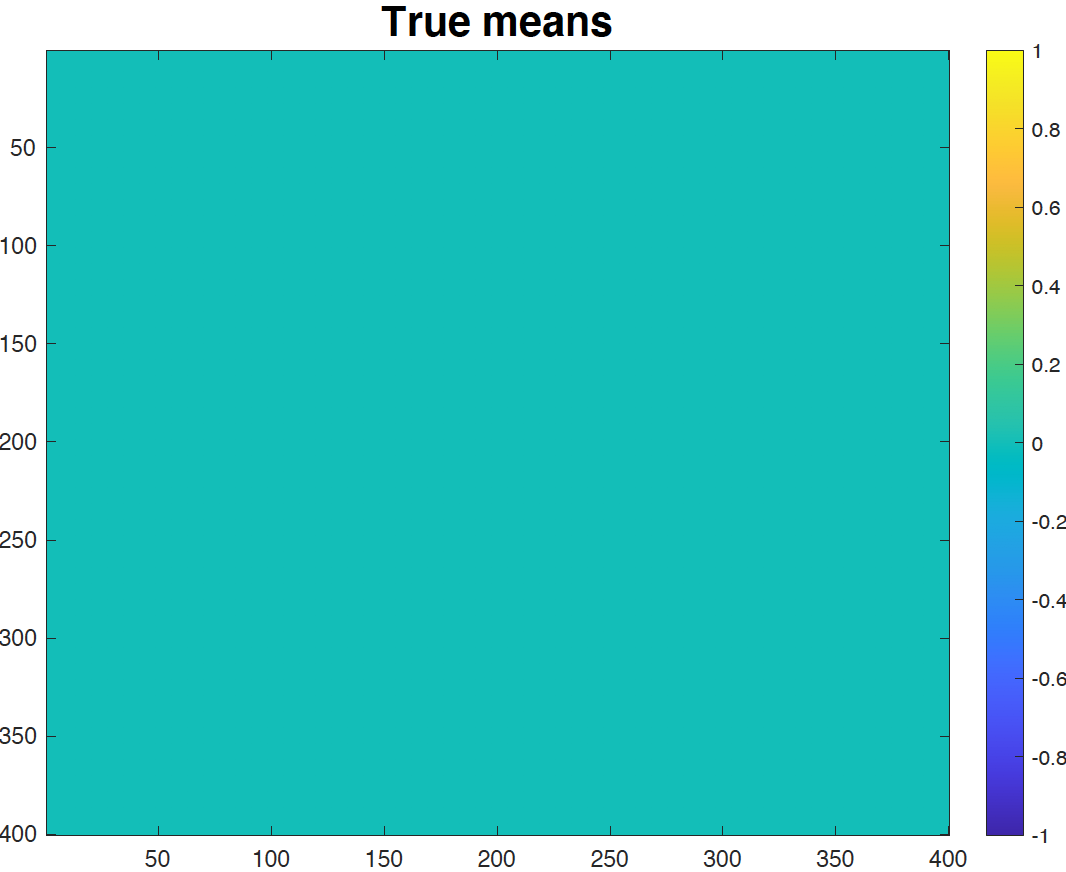}
		\includegraphics[width =1.95in,height = 1.95in]{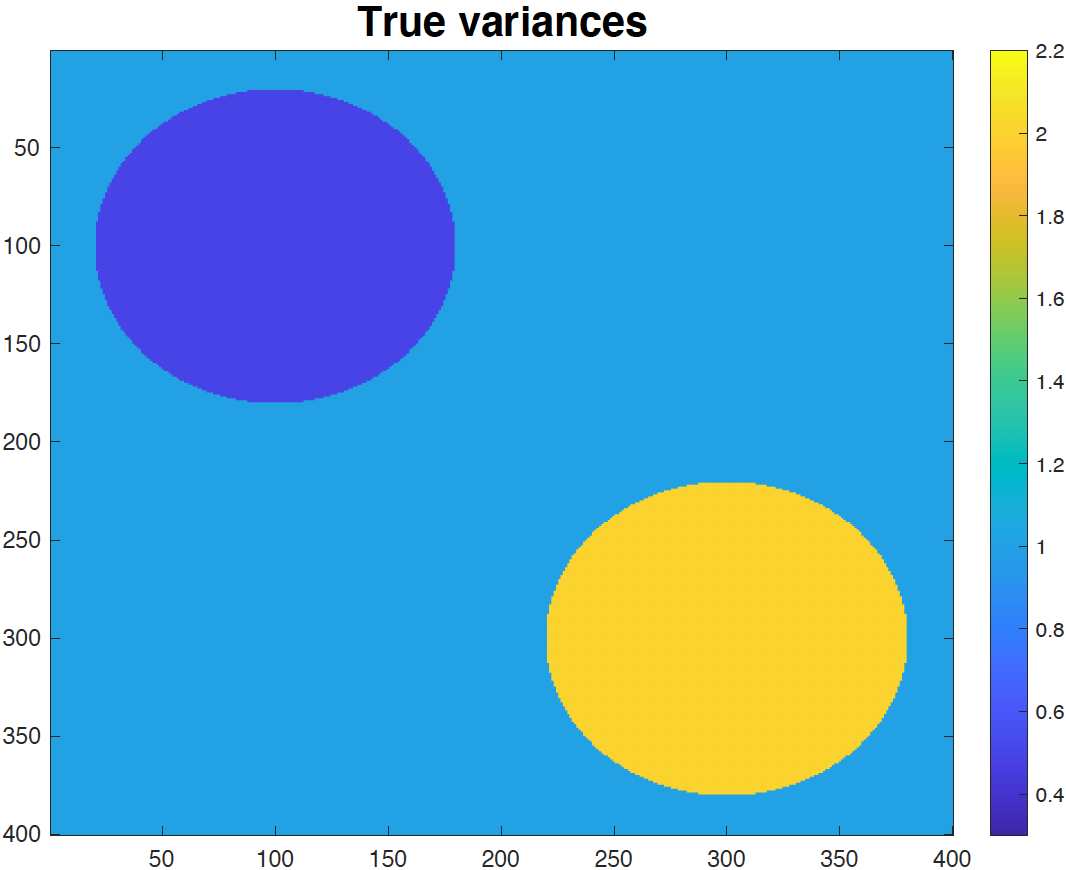}
		\includegraphics[width =1.95in,height = 1.95in]{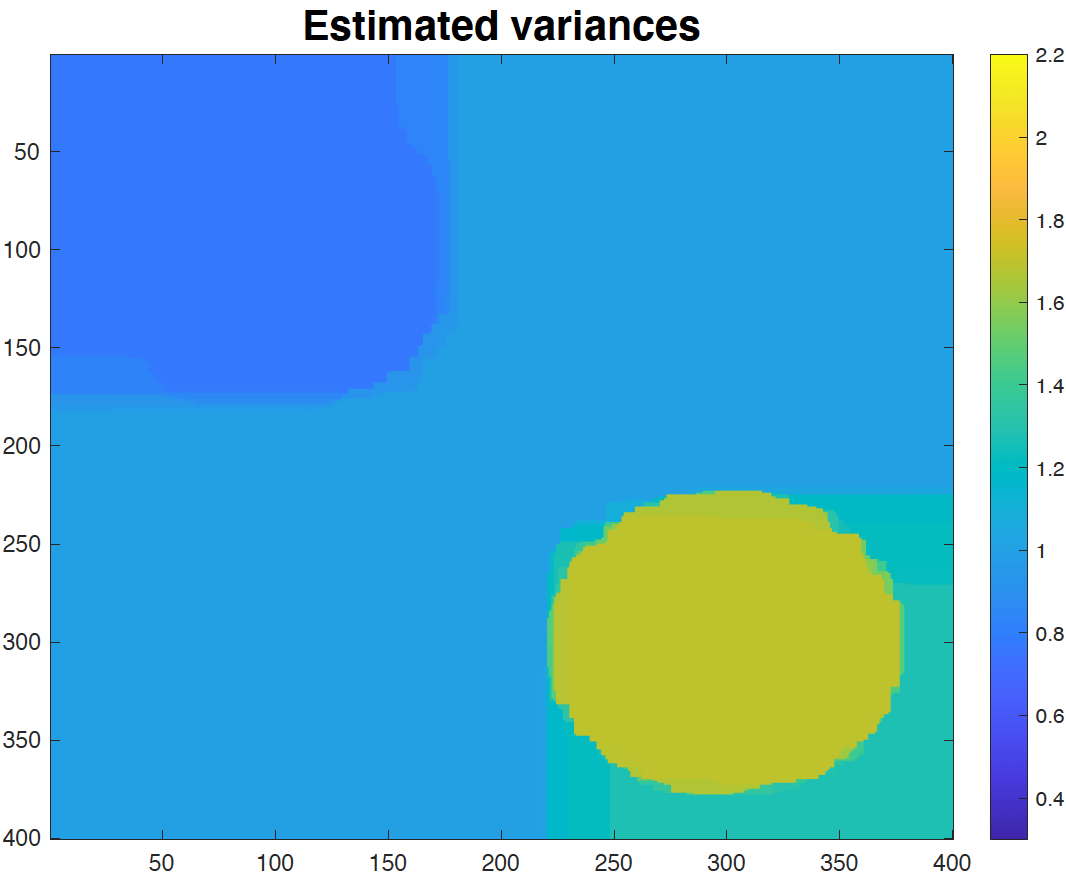}
		\caption{ 		\label{fig2} Each row corresponds to one scenario, with the top row corresponding to Scenario 4, the middle to Scenario 5, and the bottom to Scenario 6.  The left column depicts the signals $\theta^*$, the middle column the signals $v^*$, and the right column the estimated $\hat{v}$ with our method in (\ref{eqn:def})--(\ref{eqn:estimator2}). }
	\end{center}
\end{figure}

\begin{table}[t!]
	\centering
	\setlength{\tabcolsep}{7.7pt}
	\begin{scriptsize}
		\begin{tabular}{|c|ccc|ccc|ccc|}
			\toprule
			& \multicolumn{3}{c}{Scenario 4} \vline  & \multicolumn{3}{c}{Scenario 5} \vline & \multicolumn{3}{c}{Scenario 6} \vline \\
			\hline
			$n$ & L.  Pol. & Laplacian S.&  Het.&L.  Pol. &Laplacian S.& Het. &L.  Pol.  & Laplacian S. & Het.\\
			\midrule
			$100^2$ &  3.25&  11.02&  \textbf{1.34 }&2.18 & 4.91&  \textbf{1.57}& 2.40& 9.49&\textbf{1.22}\\ 
			$200^2$   & 3.27 & 10.71 &  \textbf{0.52}& 2.17&4.91 & \textbf{0.75}&2.44 &8.09 &\textbf{0.72}\\ 
			$300^2$  & 3.21 & 6.82 & \textbf{0.29}& 2.15& 4.43&  \textbf{0.43} & 2.41&7.87 &\textbf{0.42}\\ 
			$400^2$    &  3.15&  6.14&\textbf{0.18} & 2.14& 3.92&\textbf{0.28} &2.39 &7.41 &\textbf{0.29}\\ 
			\bottomrule
		\end{tabular}
	\end{scriptsize}
	\caption{
		\label{tab:ex2}  Performance evaluations of  the competing methods  for the different settings described in the text. We report  $10$ multiplied by the average
		mean squared error,  averaging over 200 Monte Carlo simulations.
	}
\end{table}

\textit{Scenario 5.} We set 
\[
v^*_{k,l} \,=\, \begin{cases}
	1.5 & \text{if}\,\,\, (k-n/2)^2 + (l-n/2)^2< (n/4)^2,\\
	0.5 & \text{otherwise.}
\end{cases}
\]
for $k,l \in \{ 1,\ldots, n^{1/2}\}$ and take $\theta^*$ as in Scenario 2 in Section \ref{sec:homos}.

\textit{Scenario 6.} We let  $\theta^* = 0 \in  \mathbb{R}^{ n^{1/2} \times n^{1/2} }$ and 
\[
v^*_{k,l} \,=\, \begin{cases}
	0.5 & \text{if}\,\,\,(k-n/4)^2 + (l-n/4)^2< (n/5)^2 ,\\
	2& \text{if}\,\,\,  (k-3n/4)^2 + (l-3n/4)^2< (n/5)^2,\\
	1 & \text{otherwise.}
\end{cases}
\]
for $k,l \in \{ 1,\ldots, n^{1/2}\}$. 

As for benchmarks, we compare our estimator Het. defined in  (\ref{eqn:def})--(\ref{eqn:estimator2})  with the  local polynomial regression (L. Pol.) method from \cite{fan1998efficient}, and the laplacian smoothing estimator (Laplacian S.) , see e.g.  \cite{smola2003kernels}. For our method Het. the tuning  parameters are selected as in Section \ref{sec:tuning}. As for  the method L. Pol., we use the  function \textit{loess} from the \textit{R} package \textit{stats} with the default choices of input. However, for large values of $n$ ($n \geq 10000$)  the computational complexity of this function becomes challeging, and hence we average 10 estimates each of which  is obtained by fitting  the estimator  to randomly selected subsets of the data with size $5000$. 

As for  the Laplacian S. estimator, we first define

\begin{equation}
	\label{eqn:laplacian_mean}  
	\tilde{\theta}  \,:=\, \underset{\theta \in  \mathbb{R}^n}{\arg\min} \,\left\{  \frac{1}{2}\|  y-\theta\|^2 \,+\,\eta \sum_{(i,j) \in G} \vert \theta_i - \theta_j\vert^2        \right\},
\end{equation}
where $\eta >0$ is a tuning parameter, and $G$ is the 2-dimensional grid graph. Thus,  the only difference with the estimator $\hat{\theta}$ defined in (\ref{eqn:gfl}) is in the penalty with (\ref{eqn:laplacian_mean}) using the square of the absolute value of the difference of the signal values, along the edges of  the graph. Once  $\tilde{\theta} $ has been constucted, we define
\begin{equation}
	\label{eqn:def2}
	\tilde{v}_i \,=\,  \hat{\gamma}_i - (\tilde{\theta}_i)^2,
\end{equation}
where
\begin{equation}
	\label{eqn:estimator3}
	\hat{\gamma} \,:=\,\underset{\gamma \in  \mathbb{R}^n}{\arg \min}\left\{ \frac{1}{2}\sum_{i=1}^{n}(y_i^2 - \gamma_i)^2   +   \eta^{\prime}  \sum_{ (i,j)\in E } \vert\gamma_i -\gamma_j\vert ^2  \right\}
\end{equation}
for a tuning parameter $\eta^{\prime} >0$. The final estimator  $\tilde{\gamma}$ is the one that we refer to as Laplacian S., where the tuning parameters are chosen with BIC as in Section \ref{sec:tuning}.

For each scenario and value of the tuning parameters, and for each data set, we compute the estimator $\hat{v}$  and choose the tuning parameters as in Section \ref{sec:tuning}. We then report 
\[
\frac{1}{n} \| \hat{v} -v^* \|^2
\]
averaging over 200  Monte Carlo simulations. The results in Table \ref{tab:ex2} show an excellent performance of our estimator, which becomes more evident as $n$ grows.  This  goes in line with our findings in Theorem \ref{thm5}.

Finally, Figure \ref{fig2} provides visualizations of Scenarios 4--6 and the corresponding estimates $\hat{v} $ for one instance of $n=400^2$. There, we can see that $\hat{v}$ is a reasonable esimator of $v^*$, although $\hat{v}$ is afected by the bias induced by $\hat{\theta}$ which comes from Equation (\ref{eqn:def}).

\begin{figure}[h!]
	\begin{center}
		\includegraphics[width = 2.9in]{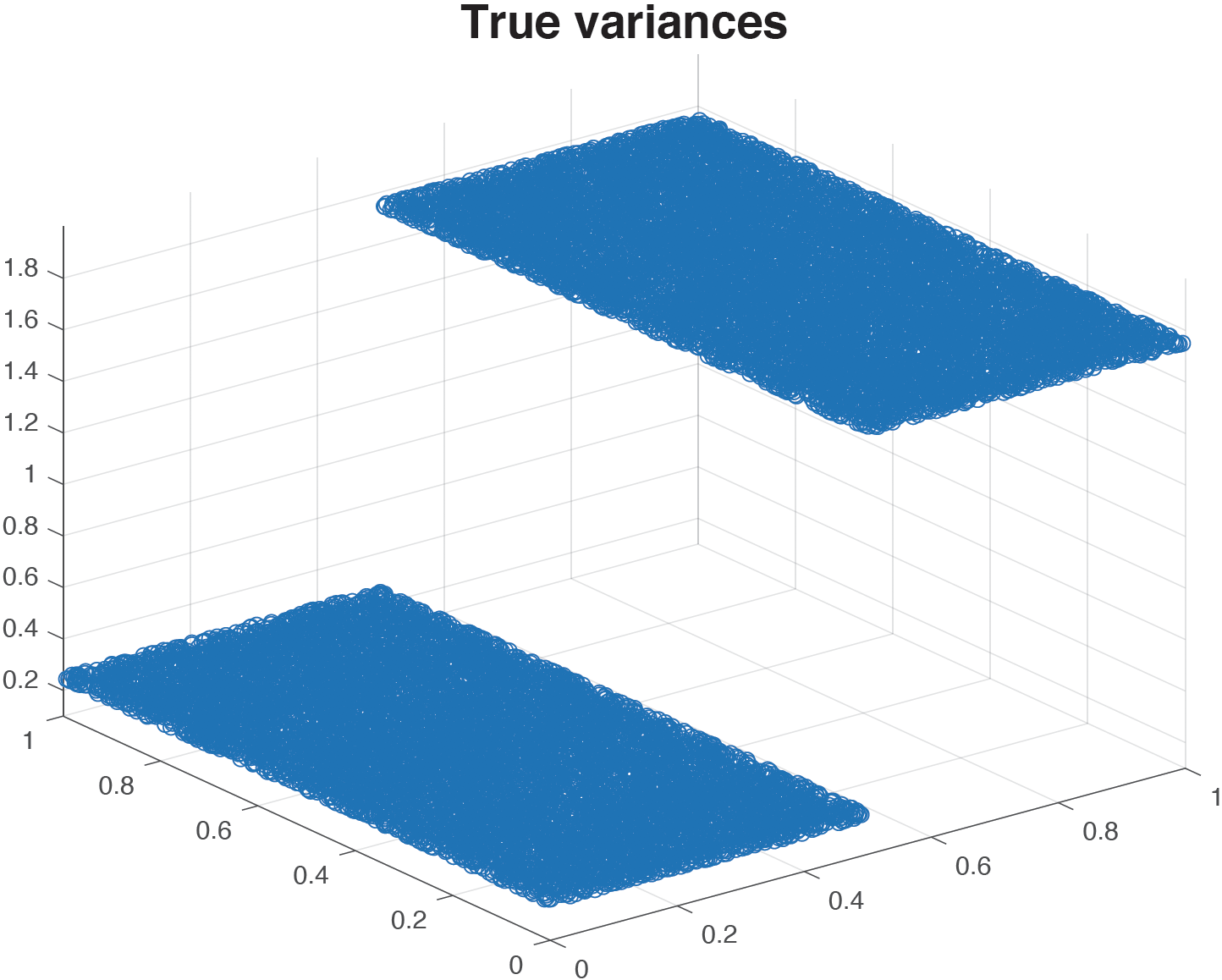}
		\includegraphics[width = 2.9in]{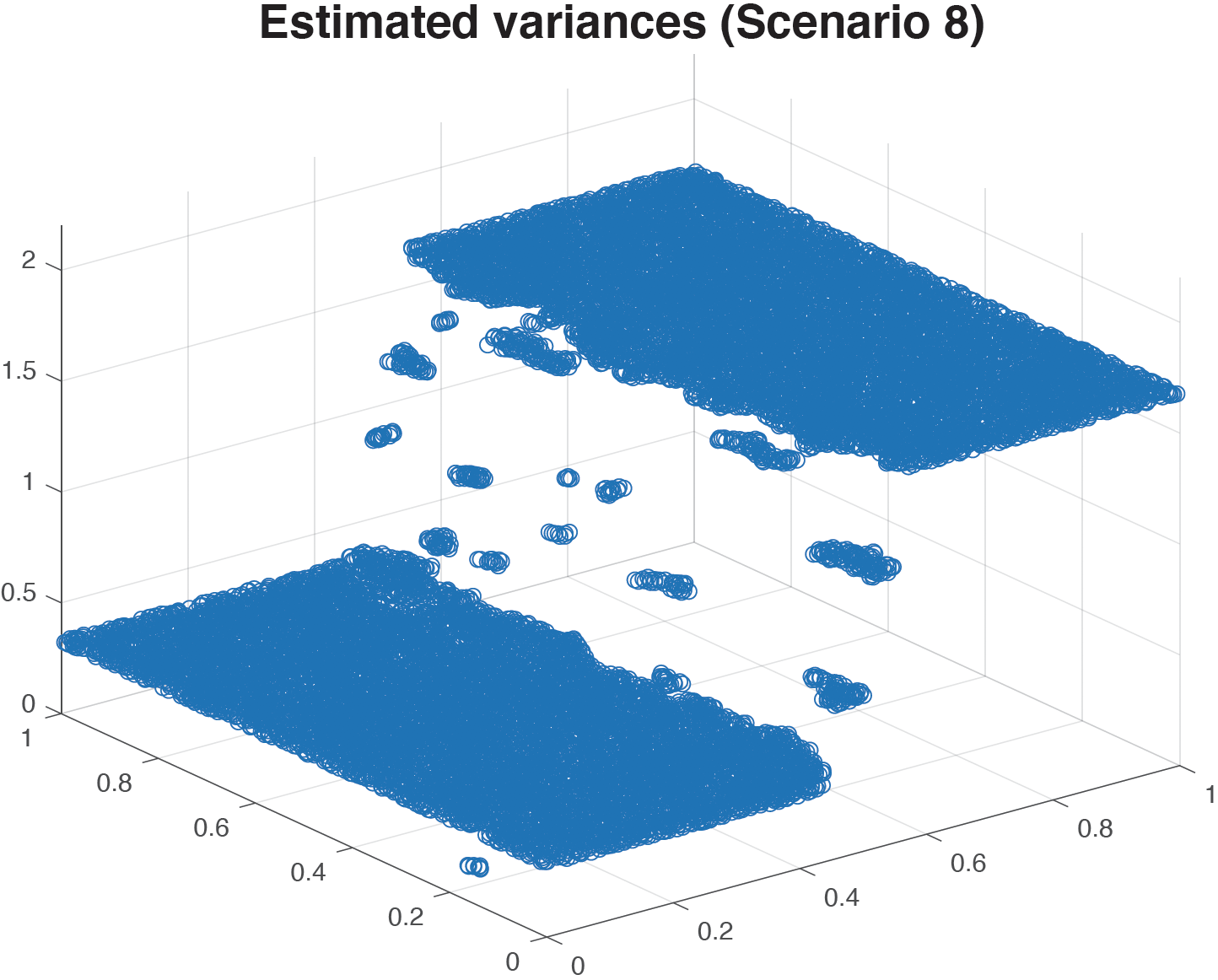}
		\includegraphics[width = 2.8in]{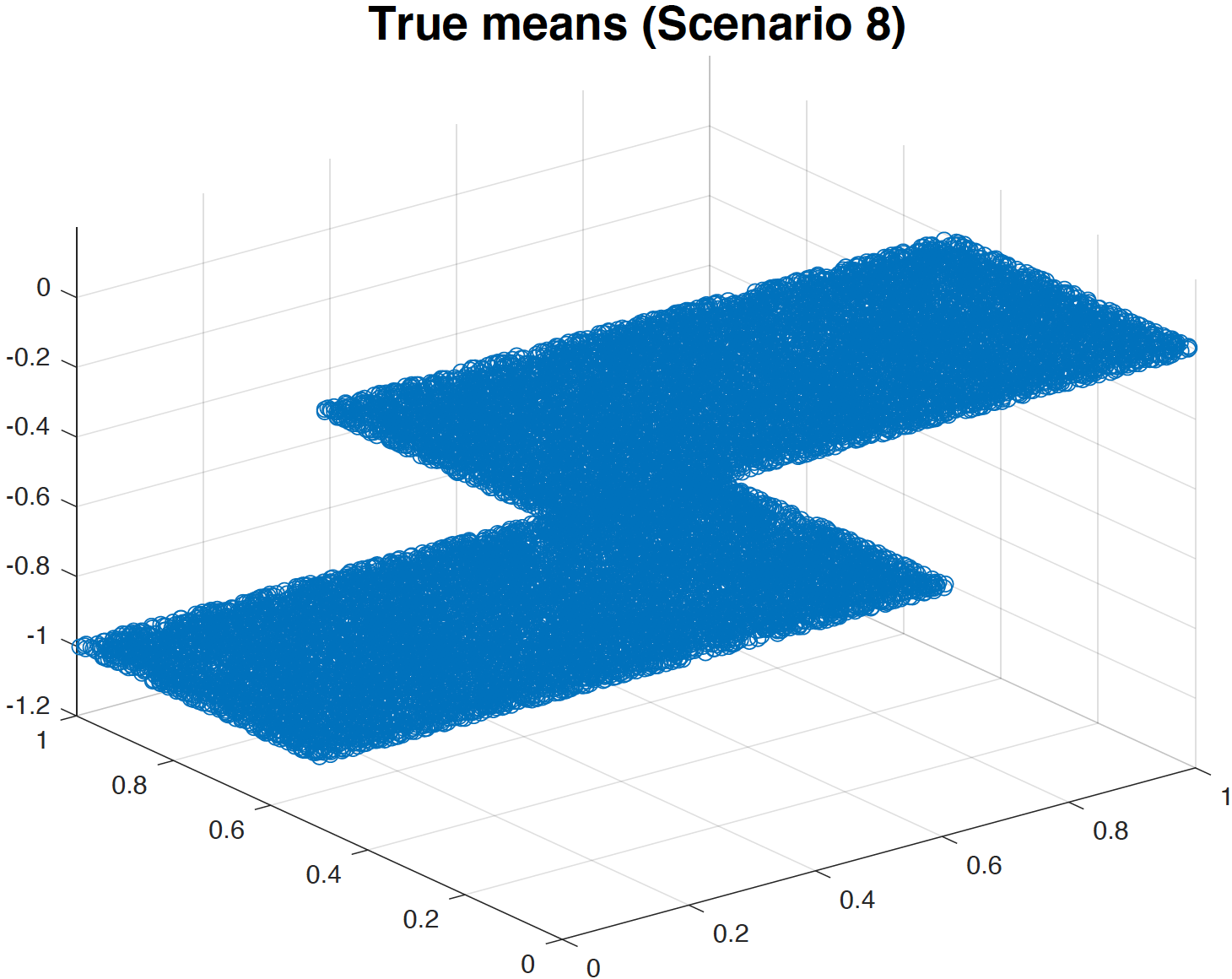}
		\includegraphics[width = 2.8in]{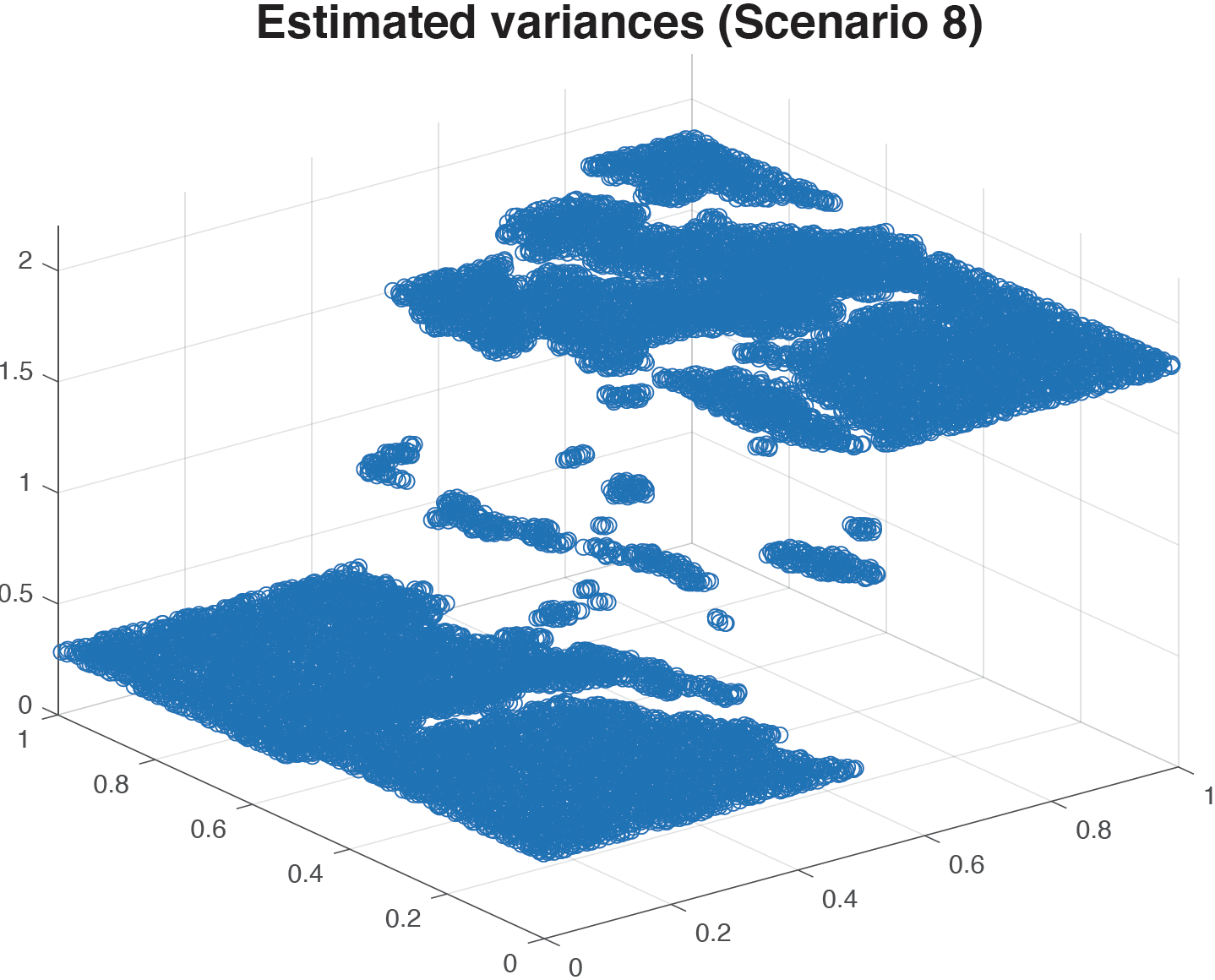}
		\caption{ 		\label{fig3}.   For $n = 20000$ and $d=2$, the top left panel shows a scatter plot of $\{(x_{i,1},x_{i,2},  v_i^*)\}_{i=1}^n$ for one instance of Scenarios 7 and 8. The top right panel displays the corresponding  scatter plot of $\{(x_{i,1},x_{i,2},  \hat{v}_i)\}_{i=1}^n$  for Scenario 7. The bottom left panel is the  scatter plot of  $\{(x_{i,1},x_{i,2},  f_0(x_i))\}_{i=1}^n$ for Scenario 8, and the bottom right panel shows the scatter plot of $\{(x_{i,1},x_{i,2},  \hat{v}_i)\}_{i=1}^n$  for Scenario 8.   Here, $\hat{v}$ is our Het. estimator defined in (\ref{eqn:def})--(\ref{eqn:estimator2})   with the $K$-NN graph.}
	\end{center}
\end{figure}

\subsection{Heteroscedastic case:  $K$-NN graphs}
\label{sec:knn}

In this experiment we consider a nonparametric regression setting. Specifically, we generate data from the model 
\[
\begin{array}{lll}
	y_i  &  =&   f_0(x_i)  +   (v_i^*)^{1/2}\epsilon_i   ,  \\ 
	v_i^* &=&	g_0(x_i)\\
	\epsilon_i	&\overset{\text{ind}}{\sim } & N(0,1),\\
	x_i & \overset{\text{ind}}{\sim }& U[0,1]^d,
\end{array}
\]
where $U[0,1]^d$ is the uniform distribution. In our simulations, we consider $d\in \{2,3\}$, $n \in \{500,10000,15000,20000\}$, and difference choices of $f_0$ and $g_0$. The functions $f_0$ and $g_0$ are taken from the following scenarios:

\textit{Scenario 7.} In this scenario, we let $f_0(z) = 0$  for all $z =  (z_1,\ldots,z_d)^{\top }\in \mathbb{R}^d$ and 
\[
g_0(z)  \,=\, \begin{cases}
	1.75	 & \text{if} \,\,\, z_1 >0.5,   \\
	0.25 & \text{otherwise.} 
\end{cases}
\]

\begin{table}[t!]
	\centering
	\setlength{\tabcolsep}{10.7pt}
	\begin{scriptsize}
		\begin{tabular}{|cc|ccc|ccc|}
			\toprule
			&	& \multicolumn{3}{c}{Scenario 7} \vline  & \multicolumn{3}{c}{Scenario 8}  \vline \\
			\hline
			$n$ & $d$& L. Pol. & Laplacian S. & Het. &L. Pol. &Laplacian S.& Het\\
			\midrule
			$5000$  & $2$&1.06 &1.73  &    \textbf{0.59} & 1.06&5.27 & \textbf{0.87}\\ 
			$10000$  & $2$& 1.04& 1.65 &    \textbf{0.40} & 1.01& 5.11& \textbf{0.58}\\ 
			$15000$  & $2$&1.01 & 1.57 &\textbf{0.34}     &1.04 & 5.26& \textbf{0.45}\\ 
			$20000$  & $2$& 1.25& 1.53 &  \textbf{0.27} & 1.08& 5.05& \textbf{0.39}\\ 
			\midrule
			$5000$  & $3$& 1.48&1.49  &    \textbf{1.45} &   \textbf{1.86}& 4.21&  1.88\\ 
			$10000$  & $3$&1.38 & 1.40 & \textbf{1.05}    & 1.61& 4.11&   \textbf{1.54}\\ 
			$15000$  & $3$&1.34 & 1.48 &\textbf{0.92}     & 1.40&4.11 &  \textbf{1.22} \\ 
			$20000$  & $3$&1.42 &1.26  & \textbf{0.89}   & 1.39& 4.37& \textbf{1.12}\\ 
			\bottomrule
		\end{tabular}
	\end{scriptsize}
	\caption{
		\label{tab:ex3}  Performance evaluations of  the competing methods  for the different settings described in the text. We report  $10$ multiplied by the average
		mean squared error,  averaging over 200 Monte Carlo simulations.
	}
\end{table}

\textit{Scenario 8.}  We let  $g_0$  as in Scenario 7, and let
\[
f_0(z)  \,=\, \begin{cases}
	0	 & \text{if} \,\,\, z_2 >0.5,   \\
	-1& \text{otherwise,} 
\end{cases}
\]
for $z  \in R^d$

Based on the above scenarios, we generate 200 data sets and compute the mean squared error of our estimator in (\ref{eqn:def})--(\ref{eqn:estimator2}) averaging over all the repetitions. Our estimator is computed using the $K$-NN graph   with $K=5$. Table \ref{tab:ex3} seems to corroborate our findings in Theorem \ref{thm5} as our method's performance appears to improve with sample size but worsens when $d$ increases.  

Finally, Figure \ref{fig3} provides a visualization of  the true signals and the estimated variances for one instance with $n=12000$ and $d=2$.


\subsection{ Ion channels data}
\label{sec:real}

\begin{figure}[h!]
	\begin{center}
		\includegraphics[width = 4.4in,height = 4in]{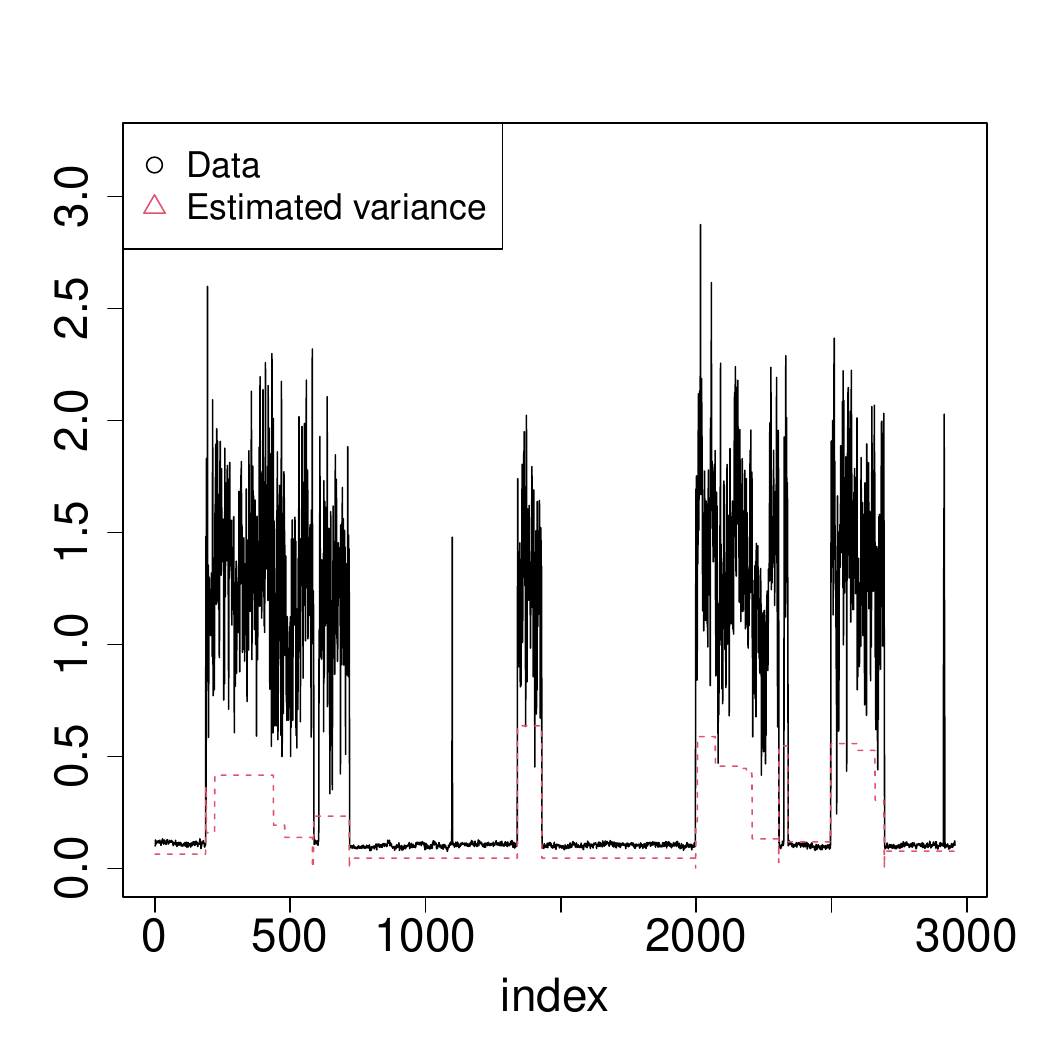}
		\caption{ 		\label{fig4} Ion channels data and estimated variances  }
	\end{center}
\end{figure}

We now validate our method using a real data example. Specifically, we consider the Ion channels data used by  \cite{jula2021multiscale}. The original data  was produced by the Steinem Lab (Institute of Organic and Biomolecular Chemistry, University of
Gottingen). As explained by the \cite{jula2021multiscale},  Ion channels are
a class of proteins expressed by all cells that create pathways for ions to pass through the cell membrane. The data consist of a single ion channel of
the bacterial porin PorB, a bacterium related to  Neisseria gonorrhoeae.

Although the original data consists of   600000 time instances. We proceed as in \cite{cappello2021scalable} and construct a signal $y \in \mathbb{R}^{2048}$. The resulting data are plotted in Figure \ref{fig4}. There, we also see the estimated variances using our proposed method for the heteroscedastic case on the 1D chain graph. We see that our method seems to capture the heteroscedastic nature of the data.

\section{Conclusion}
\label{sec:conclude}

In this paper, we have studied the problem of estimating the variance in general graph denoising problems. We have proposed and analyzed estimators for both the homoscedastic and heteroscedastic cases. In studying the latter, we also proved generalizations of known bounds for the fused lasso estimator to models beyond sub-Gaussian errors. 

Many research directions are left open in this work. One particular problem is to generalize our results to higher order versions of total variation for estimating the vector of variances. Constructing higher order versions of total variation is challenging in the case of estimating the mean in general graph-structured problems, and we expect it to be even more challenging for the variance case. Therefore, we leave this for future work.

\appendix

\section{Canonical scaling}
\label{sec:canonical}

In previous sections of the paper we made reference to the fact that the canonical scaling of the total variation in   a $d$-dimensional  grid graph  is $O(n^{1-1/d})$. Thus, for the 1D chain graph we obtain the canonical scaling is $O(1)$
and $O(n^{1/2})$ for the 2D grid graph. We now justify this by following the discussion from \cite{sadhanala2017higher}.  

To start, consider a $d$-dimensional grid graph given as $G =(V,E)$, with $V = \{1,\ldots,n\}$. We let $N= n^{1/d}$ and construct  the $d$-dimensional lattice $Z_d  =\{ (i_1/N,\ldots,i_d/N)\,:\,  i_1,\ldots,i_d \in \{1,\ldots,N\} \} \subset [0,1]^d$. Then we can index the components of a vector $\theta \in \mathbb{R}^n$ by the lattice locations, $\theta(a) $, $a\in Z_d$.  Then, the total variation of $\theta$ along the graph $G$ is given by
\[
\| \nabla_G \theta\|_1 \,=\, \frac{1}{2}\sum_{a \in Z_d} \sum_{ b \in  Z_d }  \vert \theta(a) -\theta(b ) \vert  1_{ \{   \|a-b\|=\frac{1}{N}    \} } .
\]
Notice that the factor $1/2$ appears because we are counting every edge exactly twice.  Next assume that $\theta(a)  \,=\, f(a)$ for 
a function $f \,:\,[0,1]^d \,\rightarrow \mathbb{R} $  such that
\[
\| f(a) - f(b) \| \,\leq \, L\|a-b\|,
\]
for all $a, b \in [0,1]^d$  and for a constant $L>0$.  Thus, $f$ is an $L$-Lipschitz function.  It follows that 
\[
\begin{array}{lll}
	\displaystyle  \| \nabla_G \theta\|_1    & \leq &\displaystyle  \frac{1}{2}\sum_{a \in Z_d} \sum_{ b \in  Z_d }  L\|a -b \|  1_{ \{   \|a-b\|=  \frac{1}{N}   \} }\\
	& \lesssim &\displaystyle \frac{1}{2}\sum_{a \in Z_d}  \frac{dL }{N }\\
	& =  &\displaystyle   \frac{d  n L}{2 N}   \\
	& = & O(n^{1-1/d}),
\end{array}
\]
and so $O(n^{1-1/d})$ makes sense as a canonical scaling for $\|\nabla_G \theta\|_1$ when $G$ is a $d$-dimensional grid graph.

\section{Additional experiments}

\subsection{Model selection}
\label{sec:model_selection}

Recall that in Section \ref{sec:homo} we motivated our homoscedastic estimator as being potentially useful for  model selection. In this section, we evaluate a BIC criterion for model selection  for the purpose of mean estimation in a 2-dimensional grid graph.  For this evaluation, we consider 
Scenarios 1--3 from Section \ref{sec:homos}. For each scenario, and $\sigma \in \{ 0.25, 0.5,0.75, 1 \}$, we generate 200 data sets and compare the performance of  $\hat{\theta}(\lambda)$, the solution to (\ref{eqn:gfl}) with two choices of $\lambda$. The first choice of $\lambda$ is taken as optimal, thus, as 
\[
\lambda \,:=\, \, \underset{\lambda\in \Lambda  }{\arg \min}\|  \theta^* -  \theta(\lambda) \|^2,
\]
where $\Lambda   =\{   10^1,10^2, 10^3,10^4,10^5 \}$.    The second choice of  $\lambda$ is set to
\[ 
\lambda \,:=\,\underset{\lambda\in \Lambda  }{\arg \min}\{ \|  y - \hat{\theta}_{\lambda}\|^2 \,+\, \log(n) \hat{v} \widehat{\mathrm{df}}_{\lambda} \},
\] 
where $\widehat{\mathrm{df}}_{\lambda}$ is the number of connected components induced by $\hat{\theta}(\lambda)$ in the 2-dimensional grid graph, and where $\hat{v} $ is the homoscedastic estimator defined in (\ref{eqn:dfs_est}). 

We find that in all the instances considered, both choices of $\lambda$ coincide, suggesting that in practice the estimator $\hat{v}$ can be useful for mean estimation with a BIC criterion that typically produces the optimal choice of $\lambda$.


\subsection{Mean estimation wih Laplace errors}

Notice that in the second step of our heteroscedastic estimator, Equation (\ref{eqn:estimator2}), we actually estimate $\gamma^*_i = \mathbb{E}(y_i^2)$ for $i=1,\ldots,n$. Thus, we estimate the mean of the random variables $\{y_i^2\}_{i=1}^{n}$, which are not sub-Gaussian, even if the $\{y_i\}_{i=1}^{n}$ are sub-Gaussian random variables. In our experiments, the key  is that our analysis in Theorem \ref{thm4} can be used when $\{y_i\}_{i=1}^n$  are sub-Gaussian as in such case the random variables $\{y_i^2\}_{i=1}^{n}$ are sub-Exponential.

We now evaluate the validity of Thereom \ref{thm4}  in a simulation setting where we estimate the mean of the random variables but with $y_i - \theta_i^* $    following a Laplace distribution. Specifically,  we consider the same setting as in Section \ref{sec:2d}, but focusing on estimating $\theta^* \in \mathbb{R}^n$, and with data generated as
\[
y_i \,=\,   \theta^*_i  \,+\, \sqrt{ v_i^* }\epsilon_i
\]
with  $\epsilon_i \overset{ind} {\sim} \text{Laplace}(0,1)$.   The methods that we compare are the gaph fused lasso (GFL) defined in (\ref{eqn:gfl})  with the  Laplacian S. estimator defined in (\ref{eqn:estimator3}), and where we choose the tuning parameters as in  Section \ref{sec:tuning}.

The results on Table \ref{tab:ex4} seem to provide additional evidence in favor of Theorem  \ref{thm4}. In particular, the GFL outperforms Laplacian S., and the performance of GFL improves as $n$ increases which is what it is expected in light of Theorem \ref{tab:ex4}.

\begin{table}[t!]
	\centering
	\setlength{\tabcolsep}{7.7pt}
	\begin{scriptsize}
		\begin{tabular}{|c|cc|cc|cc|}
			\toprule
			& \multicolumn{2}{c}{Scenario 4} \vline  & \multicolumn{2}{c}{Scenario 5} \vline & \multicolumn{2}{c}{Scenario 6} \vline \\
			\hline
			$n$ &  Laplacian S.&  GFL&  Laplacian S.& GFL    & Laplacian S. & GFL\\
			\midrule
			$100^2$ & 1.12& \textbf{1.08}& 1.12 &\textbf{0.91}& $    2.7\times 10^{-3}$&  $\mathbf{9.87\times 10^{-4}}$\\ 
			$200^2$  & 1.13&   \textbf{0.52} &1.12 &\textbf{0.54}&$       2.7\times 10^{-3}$ & $\mathbf{2.89\times 10^{-4}}$\\ 
			$300^2$   & 1.10&   \textbf{0.23}& 1.12 &\textbf{0.32}& $ 2.7\times 10^{-3}$&  $\mathbf{1.44\times 10^{-4}}$\\ 
			$400^2$   &1.11 & \textbf{0.13}  & 1.11 &\textbf{0.18}&$2.7\times 10^{-3}$ &   $\mathbf{6.82\times 10^{-5}}$\\ 
			\bottomrule
		\end{tabular}
	\end{scriptsize}
	\caption{
		\label{tab:ex4}  Performance evaluations of  the competing methods  for the different settings described in the text. We report  $10$ multiplied by the average
		mean squared error,  averaging over 200 Monte Carlo simulations.
	}
\end{table}

\section{Proof of Theorem \ref{thm2}}

\begin{proof}
	First, we observe that
	\[
	\begin{array}{lll}
		\vert  v_0^* - \hat{v}\vert     & \leq & \displaystyle \bigg\vert    v_0^*    \,-\,    \frac{v^*_0}{2(\floor{n/2}-1)}  \sum_{i=1}^{  \floor{n/2}-1}  \{ \epsilon_{ \sigma(2i) }  -    \epsilon_{ \sigma(2i-1) }\}^2   \bigg\vert\,+\,\\
		& & \displaystyle \bigg\vert     \frac{v_0^*}{2(\floor{n/2}-1)}  \sum_{i=1}^{  \floor{n/2}-1}  \{ \epsilon_{ \sigma(2i) }  -  \epsilon_{ \sigma(2i-1) }\}^2   \,-\,   \\
		&&\displaystyle \frac{1}{2(\floor{n/2}-1)}  \sum_{i=1}^{  \floor{n/2}-1}  \{(v_0^*)^{1/2}\epsilon_{ \sigma(2i) }  -   (v_0^*)^{1/2}\epsilon_{ \sigma(2i-1) }  +   \theta^*_{ \sigma(2i) }   -    \theta^*_{ \sigma(2i-1) }  \}^2    \bigg\vert.
	\end{array}
	\]
	Next, using the identity $a^2 -  (a+b)^2 = -b(2a+b)$ we obtain that 
	\begin{equation}
		\label{eqn:e4}
		\begin{array}{lll}
			\vert  v_0^* - \hat{v}\vert     & \leq & \displaystyle \bigg\vert    v_0^*    \,-\,    \frac{v^*_0}{2(\floor{n/2}-1)}  \sum_{i=1}^{  \floor{n/2}-1}  \{\epsilon_{ \sigma(2i) }  -    \epsilon_{ \sigma(2i-1) }\}^2   \bigg\vert\,+\,\\
			& & \displaystyle    \frac{1}{2(\floor{n/2}-1)}  \sum_{i=1}^{  \floor{n/2}-1}  \vert\theta^*_{ \sigma(2i) }  -    \theta^*_{ \sigma(2i-1) }\vert\, \vert2 (v^*_0)^{1/2}(\epsilon_{ \sigma(2i) }  -    \epsilon_{ \sigma(2i-1) }) +(\theta^*_{ \sigma(2i) }  -    \theta^*_{ \sigma(2i-1) }) \vert\\
			& \leq& \displaystyle \bigg\vert    v_0^*    \,-\,    \frac{v^*_0}{2(\floor{n/2}-1)}  \sum_{i=1}^{  \floor{n/2}-1}  \{ \epsilon_{ \sigma(2i) }  -    \epsilon_{ \sigma(2i-1) }\}^2   \bigg\vert   \,+\,\\
			& & \displaystyle    \frac{\{ 4\|\epsilon\|_{\infty}(v_0^*)^{1/2}  + 2\|\theta^*\|_{\infty}  \}}{2(\floor{n/2}-1)}  \sum_{i=1}^{  \floor{n/2}-1}  \vert\theta^*_{ \sigma(2i) }  -    \theta^*_{ \sigma(2i-1) }\vert\\
			& \leq& \displaystyle v_0^*\bigg\vert    1    \,-\,    \frac{1}{2(\floor{n/2}-1)}  \sum_{i=1}^{  \floor{n/2}-1}  ( \epsilon_{ \sigma(2i) }  -    \epsilon_{ \sigma(2i-1) })^2   \bigg\vert   \,+\,\\
			& & \displaystyle    \frac{\{ 4\|\epsilon\|_{\infty}    (v_0^*)^{1/2}  + 4\|\theta^*\|_{\infty}   \}\|\nabla_G\theta^*\|_1  }{2(\floor{n/2}-1)}  \\ 	
		\end{array}
	\end{equation}
	where the last inequality follows from Lemma 1 in \cite{padilla2016dfs}. Finally, notice that 
	\begin{equation}
		\label{eqn:e5}
		\mathbb{E}\left[  \frac{1}{2(\floor{n/2}-1)}  \sum_{i=1}^{  \floor{n/2}-1}  \{ \epsilon_{ \sigma(2i) }  -    \epsilon_{ \sigma(2i-1) }\}^2   \right]\,=\, 1
	\end{equation}
	and 
	\begin{equation}
		\label{eqn:e6}
		\begin{array}{lll}
			\displaystyle \text{var}\left[ \frac{1}{2(\floor{n/2}-1)}  \sum_{i=1}^{  \floor{n/2}-1}  \{\epsilon_{ \sigma(2i) }  -    \epsilon_{ \sigma(2i-1) }\}^2   \right] &=  &\displaystyle \frac{1}{4(\floor{n/2}-1)^2}  \sum_{i=1}^{  \floor{n/2}-1} \text{var}\left[   \{ \epsilon_{ \sigma(2i) }  -    \epsilon_{ \sigma(2i-1) }\}^2   \right] \\
			& \leq &\displaystyle \frac{1}{4(\floor{n/2}-1)^2}  \sum_{i=1}^{  \floor{n/2}-1} \mathbb{E}\left[   \{ \epsilon_{ \sigma(2i) }  -    \epsilon_{ \sigma(2i-1) }\}^4  \right] \\
			& \leq &\displaystyle \frac{1}{4(\floor{n/2}-1)^2}  \sum_{i=1}^{  \floor{n/2}-1}  \mathbb{E}\left\{  8 \epsilon_{ \sigma(2i) }^4  +8  \epsilon_{ \sigma(2i-1) }^4  \right\} \\
			& \leq  &\displaystyle \frac{4   }{(\floor{n/2}-1)}   \underset{i=1,\ldots,n}{\sup}\,  \mathbb{E}(\epsilon_i^4) \\
		\end{array}
	\end{equation}
	where the second inequality follows from the inequality $(a+b)^4 \leq  8 a^4 + 8b^4$. Combining (\ref{eqn:e4})--(\ref{eqn:e6}) with the Chebyshev's inequality we conclude the proof.
\end{proof}

\section{A general upper bound}
\label{sec:general_bound}

\begin{theorem}
	\label{thm3}
	Consider data $\{o_i \}_{i=1}^n$  generated as $o_i = \beta_i^*+\varepsilon_i$ for some $\beta^*\in  \mathbb{R}^n$ and $\varepsilon_1,\ldots,\varepsilon_n$ independent random variables satisfying  satisfying $\mathbb{E}(\varepsilon_i) = 0$ and  
	\[
	\underset{i = 1,\ldots, n}{\max}\,\mathbb{E}(\varepsilon_i^4) \,=\, O(1).
	\]
	Let  $\hat{\beta}$ be defined as 
	\[
	\hat{\beta} \,:=\,\underset{\beta \in \mathbb{R}^n}{\arg \min}\left\{ \frac{1}{2}\sum_{i=1}^{n}(o_i - \beta_i)^2   +   \lambda\sum_{ (i,j)\in E } \vert\beta_i -\beta_j\vert   \right\}.
	\]
	Let $\eta > 0$. Then   for any  sequence $U_n>0$ and for  any $\eta>0$,  it holds that 
	
	\[
	\begin{array}{lll}
		\mathrm{pr}( \| \hat{\beta} -\beta^*\|  >\eta)    & \leq&\displaystyle\frac{16 n^{1/2}   \underset{i=1,\ldots,n}{\max}   \{\mathbb{E}( \varepsilon_i^4  )\}^{1/4}      \{  \mathrm{pr}(\vert  \varepsilon_i\vert > U_n) \}^{1/4}}{\eta}\,+\,\\
		& &\displaystyle \frac{16}{\eta^2} \,\mathbb{E}\left[     \underset{\beta \in  \mathbb{R}^n \,:\, \|\beta - \beta^*\|\leq \eta , \, \| \nabla_G\beta\|_1 \leq 5 \|\nabla_G\beta^*\|_1 }{\sup}  \sum_{i=1}^n   \xi_i\varepsilon_i  1_{  \{\vert  \varepsilon_i\vert \leq U_n\}   } (\beta_i - \beta_i^*)    \right],
	\end{array}
	\]
	where  $\xi_1,\ldots,\xi_n$ are independent Rademacher random variables independent of $\{\varepsilon_i\}_{i=1}^n$, provided that
	\begin{equation}
		\label{eqn:lambda}
		\lambda \,=\,   \frac{\eta^2}{4\| \nabla_G \beta^*\|_1 }.
	\end{equation}
\end{theorem}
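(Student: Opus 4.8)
The plan is to run a basic-inequality plus convexity-localization argument, splitting the noise into a truncated part and a tail part, and then symmetrizing the truncated part. First, from optimality of $\hat\beta$ in \eqref{eqn:glf3} and feasibility of $\beta^*$, expanding the squares and cancelling $\tfrac12\|\varepsilon\|^2$ gives the basic inequality $\tfrac12\|\hat\beta-\beta^*\|^2+\lambda\|\nabla_G\hat\beta\|_1\le\langle\varepsilon,\hat\beta-\beta^*\rangle+\lambda\|\nabla_G\beta^*\|_1$. Because the objective $f$ is convex and $f(\hat\beta)\le f(\beta^*)$, every point on the segment $[\beta^*,\hat\beta]$ satisfies $f(\cdot)\le f(\beta^*)$, hence the same basic inequality. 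On the event $\{\|\hat\beta-\beta^*\|>\eta\}$ I would localize by writing $r=\|\hat\beta-\beta^*\|$, letting $s^\star$ be the first $s\in(0,1]$ at which the convex map $s\mapsto\|\nabla_G((1-s)\beta^*+s\hat\beta)\|_1$ reaches $5\|\nabla_G\beta^*\|_1$ (set $s^\star=1$ if it never does, which forces $\eta/r<1\le s^\star$), and taking $\beta^\diamond$ to be the segment point at $s^\diamond=\min(\eta/r,s^\star)$. By construction $\|\beta^\diamond-\beta^*\|=s^\diamond r\le\eta$, and since $s^\diamond\le s^\star$ convexity of the total variation yields $\|\nabla_G\beta^\diamond\|_1\le 5\|\nabla_G\beta^*\|_1$, so $\beta^\diamond$ lies in the constraint set $\mathcal S$ over which the supremum in the statement is taken.

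The quantitative heart of the argument is to feed the prescribed $\lambda=\eta^2/(4\|\nabla_G\beta^*\|_1)$ into the basic inequality for $\beta^\diamond$. If $s^\diamond=\eta/r$ then $\|\beta^\diamond-\beta^*\|=\eta$, and dropping the nonnegative term $\lambda\|\nabla_G\beta^\diamond\|_1$ gives $\langle\varepsilon,\beta^\diamond-\beta^*\rangle\ge\tfrac{\eta^2}{2}-\lambda\|\nabla_G\beta^*\|_1=\tfrac{\eta^2}{4}$; if instead $s^\diamond=s^\star$ then $\|\nabla_G\beta^\diamond\|_1=5\|\nabla_G\beta^*\|_1$ and the same computation gives $\langle\varepsilon,\beta^\diamond-\beta^*\rangle\ge 4\lambda\|\nabla_G\beta^*\|_1=\eta^2$. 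Either way $\langle\varepsilon,\beta^\diamond-\beta^*\rangle\ge\eta^2/4$ with $\beta^\diamond\in\mathcal S$. I then truncate: write $\langle\varepsilon,\beta^\diamond-\beta^*\rangle=T_1+T_2$ with $T_1=\sum_i\varepsilon_i1_{\{|\varepsilon_i|\le U_n\}}(\beta^\diamond_i-\beta^*_i)$ and $T_2$ the complementary tail sum. Cauchy--Schwarz and $\|\beta^\diamond-\beta^*\|\le\eta$ give $T_2\le\eta\,S$ with $S=(\sum_i\varepsilon_i^21_{\{|\varepsilon_i|>U_n\}})^{1/2}$, while $\beta^\diamond\in\mathcal S$ gives $T_1\le W:=\sup_{\beta\in\mathcal S}\sum_i\varepsilon_i1_{\{|\varepsilon_i|\le U_n\}}(\beta_i-\beta^*_i)$. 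Hence $\{\|\hat\beta-\beta^*\|>\eta\}\subseteq\{W\ge\eta^2/8\}\cup\{S\ge\eta/8\}$.

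It remains to bound the two probabilities. For the tail part, Markov followed by Cauchy--Schwarz on $\mathbb E(S)\le(\sum_i\mathbb E[\varepsilon_i^21_{\{|\varepsilon_i|>U_n\}}])^{1/2}\le(\sum_i(\mathbb E\varepsilon_i^4)^{1/2}\mathbb P(|\varepsilon_i|>U_n)^{1/2})^{1/2}$ produces a bound of order $n^{1/2}\max_i(\mathbb E\varepsilon_i^4)^{1/4}\mathbb P(|\varepsilon_i|>U_n)^{1/4}/\eta$, matching the first declared term. For $W$, Markov gives $\mathbb P(W\ge\eta^2/8)\le 8\,\mathbb E(W)/\eta^2$, and I would symmetrize after recentering: since $m_i:=\mathbb E[\varepsilon_i1_{\{|\varepsilon_i|\le U_n\}}]=-\mathbb E[\varepsilon_i1_{\{|\varepsilon_i|>U_n\}}]$, I split off $\sup_{\mathcal S}\sum_i m_i(\beta_i-\beta^*_i)\le\eta\|m\|$ and apply the standard ghost-sample symmetrization to the now centered summands, obtaining $\mathbb E(W)\le 2\,\mathbb E\sup_{\beta\in\mathcal S}\sum_i\xi_i\varepsilon_i1_{\{|\varepsilon_i|\le U_n\}}(\beta_i-\beta^*_i)+\eta\|m\|$. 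The first piece is exactly the Rademacher expectation in the second declared term, and the recentering piece obeys $\|m\|\le n^{1/2}\max_i(\mathbb E\varepsilon_i^4)^{1/4}\mathbb P(|\varepsilon_i|>U_n)^{1/4}$ (using Jensen and $\mathbb P\le1$), so it is of the same order as the tail term and is absorbed; the generous constant $16$ on both terms precisely accommodates these combinations.

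I expect the main obstacle to be the localization step: guaranteeing that the constructed segment point simultaneously belongs to $\mathcal S$ \emph{and} carries the lower bound $\langle\varepsilon,\beta^\diamond-\beta^*\rangle\ge\eta^2/4$ in every regime of $\|\nabla_G\hat\beta\|_1$, in particular handling the case where $\hat\beta$ is far away and has large total variation by stopping at the first crossing of the total-variation ball rather than the $\eta$-sphere. The symmetrization of the non-centered truncated errors is the other delicate point, since one must verify that the recentering term $\|m\|$ is of order no larger than the declared tail term rather than contributing a genuinely new error.
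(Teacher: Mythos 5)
Your proposal is correct and follows essentially the same route as the paper's proof of Theorem \ref{thm3}: a basic inequality propagated along the segment $[\beta^*,\hat\beta]$ by convexity, localization to the set $\{\|\beta-\beta^*\|\le\eta,\ \|\nabla_G\beta\|_1\le 5\|\nabla_G\beta^*\|_1\}$ via the choice $\lambda=\eta^2/(4\|\nabla_G\beta^*\|_1)$, truncation at $U_n$ with Cauchy--Schwarz and the fourth-moment bound for the tail, and recentering plus symmetrization for the truncated part. The only differences are organizational (you stop the segment at a single point $\beta^\diamond$ and split off the tail as a separate event, whereas the paper works with the pair of events $\Omega_1,\Omega_2$ and absorbs the tail and recentering terms inside the expectation of the supremum), and both yield the stated constants.
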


\begin{proof}
	First, notice that by convexity and the basic inequality we have that 
	\begin{equation}
		\label{eqn:e7}
		\frac{1}{2}\sum_{i=1}^{n}(o_i - \beta_i)^2   +   \lambda\| \nabla_G \beta\|_1\,\leq \, \frac{1}{2}\sum_{i=1}^{n}(o_i - \beta_i^*)^2   +   \lambda\| \nabla_G \beta^*\|_1
	\end{equation}
	for any $\beta \in \Lambda \,:=\, \{  s  \hat{\beta} +(1-s)\beta^* \,:\, s\in [0,1]  \}$.  Then
	\begin{equation}
		\label{eqn:e8}
		\| \nabla_G \beta\|_1 \,\leq \, 	 \| \nabla_G \beta\|_1 \,+\,  \frac{\|\beta - \beta^*\|^2}{2\lambda}  \,\leq \, \frac{\varepsilon^{\top}  (\beta - \beta^*) }{\lambda} +     \| \nabla_G \beta^*\|_1
	\end{equation}
	for all $\beta \in \Lambda$.  Ths implies
	\begin{equation}
		\label{eqn:e9}
		\begin{array}{lll}
			\displaystyle 	 \| \nabla_G (\beta -\beta^*)\|_1   & \leq & \displaystyle  	 \| \nabla_G \beta \|_1    \,+\,	 \| \nabla_G \beta^*\|_1  \\ 
			&\leq &  \displaystyle  \frac{\varepsilon^{\top}  (\beta - \beta^*) }{\lambda} +     2\| \nabla_G \beta^*\|_1,
		\end{array}
	\end{equation}
	for all $\beta \in \Lambda$. 
	
	Next, let $\beta \in \Lambda$ and suppose that   $\|  \beta - \beta^*\|^2 \leq  \eta^2 $, and $\| \nabla_G \beta\|_1   \geq  5 \| \nabla_G \beta^*\|_1  $. Then 
	\[
	\| \nabla_G (\beta -\beta^*)\|_1   \,\geq \, \| \nabla_G \beta \|_1  - \| \nabla_G \beta^*\|_1  \, \geq  \,4 \| \nabla_G \beta^*\|_1 .
	\]
	Hence, setting 
	\[
	s \,:=\, \frac{4  \| \nabla_G \beta^*\|_1  }{	 \| \nabla_G (\beta -\beta^*)\|_1 } ,   
	\]
	clearly $s\in [0,1]$, and we let 
	\[
	\beta^{\prime}  :=  s \beta + (1-s)\beta^* \in \Lambda.  
	\]
	Then
	\[
	\| \beta^{\prime} - \beta^*\|^2 \,\leq \,    \| \beta - \beta^*\|^2 \,\leq \, \eta^2,
	\]
	and 
	\[
	\begin{array}{lll}
		\| \nabla_G (\beta^{\prime} -\beta^*)\|_1  &=&   s\| \nabla_G (\beta -\beta^*)\|_1\\ 
		&=  &4 \| \nabla_G \beta^*\|_1.
	\end{array}
	\]
	Therefore, from (\ref{eqn:e9}), 
	\[
	4 \| \nabla_G \beta^*\|_1 \,  =\,  	\| \nabla_G (\beta^{\prime} -\beta^*)\|_1 \,\leq \, \frac{\varepsilon^{\top}  (\beta^{\prime} - \beta^*) }{\lambda} +     2\| \nabla_G \beta^*\|_1
	\]
	which implies 
	\[
	2 \| \nabla_G \beta^*\|_1 \,  \,\leq \, \frac{\varepsilon^{\top}  (\beta^{\prime} - \beta^*) }{\lambda}.
	\]
	Hence, if we take
	\[
	\lambda \,=\,   \frac{\eta^2}{4\| \nabla_G \beta^*\|_1 }
	\]
	we obtain 
	\[
	\frac{\eta^2}{2}\,\leq\, \varepsilon^{\top} ( \beta^{\prime} - \beta^* ).
	\]
	As a result, the events
	\[
	\Omega_1 \,:=\, \left\{  \underset{\beta \in \Lambda \,:\,   \|\beta- \beta^*\| \leq \eta }{\sup}\,   \|\nabla_G \beta\|_1   \,\geq \, 5\|\nabla_G \beta^*\|_1 \right\}
	\]
	and 
	\[
	\Omega_2 \,:=\,\left\{  \underset{\beta \in \Lambda \,:\,   \|\beta- \beta^*\| \leq \eta,\,  \|\nabla_G(\beta-\beta^*)\|_1\leq 4  \|\nabla_G \beta^*\|_1 }{\sup}\,   \varepsilon^{\top}(\beta-\beta^*)\,\geq \, \frac{\eta^2}{2} \right\}
	\]
	satisfy that  $\Omega_1 \subset  \Omega_2$. And so, 
	\begin{equation}
		\label{eqn:omega}
		\mathrm{pr}(\Omega_1 )\,\leq\, \mathrm{pr}(\Omega_2).
	\end{equation}
	
	Next, suppose that    $\| \hat{\beta} - \beta^*\| >\eta$. Then there exists  $\beta \in \Lambda$ such that $\| \beta - \beta^*\| =\eta$ and so (\ref{eqn:e8})  implies  that
	\[
	\frac{\eta^2}{2} \,\leq \,  \varepsilon^{\top} (\beta - \beta^*) \,+\,  \lambda \|\nabla_G \beta^*\|_1 \,-\,\lambda \|\nabla_G \beta \|_1.
	\]
	Hence, given our choice of $\lambda$, we obtain that 
	\[
	\frac{\eta^2}{4} \,\leq \,  \varepsilon^{\top}(\beta - \beta^*),
	\]
	for some $\beta\in \Lambda$, provided that $\| \hat{\beta} - \beta^*\| >\eta$.

	The above  implies that 
	\begin{equation}
		\label{eqn:e10}
		\begin{array}{lll}
			\mathrm{pr}( \| \hat{\beta} -\beta^*\|  >\eta)   & \leq &\displaystyle   \mathrm{pr}( \{  \| \hat{\beta} -\beta^*\|  >\eta \}  \cap  \Omega_1^c )   \,+ \, \mathrm{pr}(\Omega_1)     \\
			& \leq&\displaystyle  \mathrm{pr}( \{  \| \hat{\beta} -\beta^*\|  >\eta \}  \cap  \Omega_1^c )   \,+ \, \mathrm{pr}(\Omega_2)     \\
			& \leq& \displaystyle   \mathrm{pr}\bigg\{\underset{\beta \in \Lambda   \,:\, \|\beta - \beta^*\|\leq \eta , \, \| \nabla_G\beta\|_1 \leq 5 \|\nabla_G\beta^*\|_1 }{\sup}  \varepsilon^{\top}(\beta - \beta^*) \,\geq \, \frac{\eta^2}{4} \bigg\}   \,+ \, \mathrm{pr}(\Omega_2)     \\
			& \leq&\displaystyle   2\,\mathrm{pr}\bigg\{ \underset{\beta \in \Lambda   \,:\, \|\beta - \beta^*\|\leq \eta , \, \| \nabla_G\beta\|_1 \leq 5 \|\nabla_G\beta^*\|_1 }{\sup}  \varepsilon^{\top}(\beta - \beta^*)  \,\geq\, \frac{\eta^2}{4}\bigg\} \\
			& \leq& \displaystyle \frac{8}{\eta^2} \,\mathbb{E}\left\{    \underset{\beta \in \Lambda   \,:\, \|\beta - \beta^*\|\leq \eta , \, \| \nabla_G\beta\|_1 \leq 5 \|\nabla_G\beta^*\|_1 }{\sup}  \varepsilon^{\top}(\beta - \beta^*) \right\}\\
			& =:&  A_1,
		\end{array}
	\end{equation}
	where the second inequality follows from (\ref{eqn:omega}),  and third from the discussion above,  the fourth from the definition of $\Omega_2$, and the last inequality from Markov's inequality. Next,  notice that 
	\[
	\begin{array}{lll}
		A_1     & \leq&    \displaystyle \frac{8}{\eta^2} \,\mathbb{E}\left[     \underset{\beta \in \Lambda   \,:\, \|\beta - \beta^*\|\leq \eta , \, \| \nabla_G\beta\|_1 \leq 5 \|\nabla_G\beta^*\|_1 }{\sup}  \sum_{i=1}^n  \varepsilon_i  1_{  \{\vert  \varepsilon_i\vert \leq  U_n\}   } (\beta_i - \beta_i^*)    \right]\,+\,\\
		& &  \displaystyle \frac{8}{\eta^2} \,\mathbb{E}\left[    \underset{\beta \in \Lambda   \,:\, \|\beta - \beta^*\|\leq \eta , \, \| \nabla_G\beta\|_1 \leq 5 \|\nabla_G\beta^*\|_1 }{\sup}  \sum_{i=1}^n  \varepsilon_i  1_{  \{\vert  \varepsilon_i\vert >  U_n\}   } (\beta_i - \beta_i^*)    \right]\\
		& \leq&    \displaystyle \frac{8}{\eta^2} \,\mathbb{E}\left\{    \underset{\beta \in \Lambda   \,:\, \|\beta - \beta^*\|\leq \eta , \, \| \nabla_G\beta\|_1 \leq 5 \|\nabla_G\beta^*\|_1 }{\sup}  \sum_{i=1}^n  (\varepsilon_i  1_{  \{\vert  \varepsilon_i\vert \leq U_n\}   } -   \mathbb{E}[\varepsilon_i  1_{  \{\vert  \varepsilon_i\vert \leq  U_n\}   } ]  )(\beta_i - \beta_i^*)    \right\}\,+\,\\
		&&   \displaystyle \frac{8}{\eta^2} \,   \underset{\beta \in \Lambda   \,:\, \|\beta - \beta^*\|\leq \eta , \, \| \nabla_G\beta\|_1 \leq 5 \|\nabla_G\beta^*\|_1 }{\sup}  \sum_{i=1}^n  \mathbb{E}[\varepsilon_i  1_{  \{\vert  \varepsilon_i\vert \leq U_n\}   } ]  (\beta_i - \beta_i^*)   \,+\,\\
		& &\displaystyle \frac{8}{\eta^2} \,\mathbb{E}\left[     \underset{\beta \in \Lambda   \,:\, \|\beta - \beta^*\|\leq \eta , \, \| \nabla_G\beta\|_1 \leq 5 \|\nabla_G\beta^*\|_1 }{\sup}  \sum_{i=1}^n  \varepsilon_i  1_{  \{\vert  \varepsilon_i\vert >  U_n\}   } (\beta_i - \beta_i^*)    \right]\\
		& =:&A_2+A_3+A_4,
	\end{array}
	\]
	next we proceed to bound $A_2$, $A_3$ and $A_4$.  To bound $A_3$, notice that since $\mathbb{E}(\varepsilon_i)=0$  then 
	\[
	\begin{array}{lll}
		A_3    & =& \displaystyle \frac{8}{\eta^2} \,   \underset{\beta \in \Lambda   \,:\, \|\beta - \beta^*\|\leq \eta , \, \| \nabla_G\beta\|_1 \leq 5 \|\nabla_G\beta^*\|_1 }{\sup}  \sum_{i=1}^n  - \mathbb{E}[\varepsilon_i  1_{  \{\vert  \varepsilon_i\vert > U_n\}   } ]  (\beta_i - \beta_i^*). 
	\end{array}
	\]
	Hence,
	\begin{equation}
		\label{eqn:e11}
		\begin{array}{lll}
			A_3    & \leq& \displaystyle \frac{8n^{1/2}}{\eta} \,  \underset{i=1,\ldots,n}{\max}  \vert \mathbb{E}[\varepsilon_i  1_{  \{\vert  \varepsilon_i\vert >U_n\}   } ] \vert \\
			& \leq&\displaystyle \frac{8n^{1/2}}{\eta} \,   \underset{i=1,\ldots,n}{\max}  \left(\mathbb{E}(\varepsilon_i^2  ) \,\mathbb{E}[1_{  \{\vert  \varepsilon_i\vert >U_n\}   } ]  \right)^{1/2} \\
			& = & \displaystyle   \frac{8 n^{1/2}}{\eta}\left\{  \underset{i=1,\ldots,n}{\max}  \mathbb{E}(\varepsilon_i^2  )  \,  \mathrm{pr}(\vert  \varepsilon_i\vert >U_n)   \right\}^{1/2},
		\end{array}
	\end{equation}
	where the first and second inequalities follow from  Cauchy–Schwarz inequality.
	
	To bound $A_4$, we observe that 
	\begin{equation}
		\label{eqn:e12}
		\begin{array}{lll}
			A_4&\leq &\displaystyle \frac{8}{\eta^2} \, \mathbb{E}\bigg(     \underset{\beta \in \Lambda   \,:\, \|\beta - \beta^*\|\leq \eta , \, \| \nabla_G\beta\|_1 \leq 5 \|\nabla_G\beta^*\|_1 }{\sup}   \bigg[\sum_{i=1}^n  \varepsilon_i^2 1_{  \{\vert  \varepsilon_i\vert > U_n\}   }  \bigg]^{1/2} \|\beta - \beta^*\|  \bigg)\\
			& \leq& \displaystyle\frac{8}{\eta} \, \mathbb{E}\bigg(      \bigg[\sum_{i=1}^n  \varepsilon_i^2 1_{  \{\vert  \varepsilon_i\vert > U_n\}   }  \bigg]^{1/2}  \bigg)\\
			& \leq&\displaystyle\frac{8}{\eta} \,  \bigg( \mathbb{E}\bigg[\sum_{i=1}^n  \varepsilon_i^2 1_{  \{\vert  \varepsilon_i\vert >  U_n\}   } \bigg]   \bigg)^{1/2}\\
			& \leq&\displaystyle\frac{8 n^{1/2}}{\eta} \,  \bigg( \underset{i=1,\ldots,n}{\max} \mathbb{E}\bigg[  \varepsilon_i^2 1_{  \{\vert  \varepsilon_i\vert >  U_n\}   } \bigg]   \bigg)^{1/2}\\
			& \leq&\displaystyle\frac{8 n^{1/2}    \max_{i=1,\ldots,n}\{\mathbb{E}( \varepsilon_i^4  )\}^{1/4}      \{ \mathrm{pr}(\vert  \varepsilon_i\vert >  U_n) \}^{1/4}}{\eta}. 
		\end{array}
	\end{equation}
	
	Let us now proceed to bound $A_2$.   Let $\varepsilon_1^{\prime},\ldots,\varepsilon_n^{\prime}$ independent copies of $\varepsilon_1,\ldots,\varepsilon_n$. Then 
	for independent Rademacher random variables $\xi_1,\ldots,\xi_n$, it holds that 
	\begin{equation}
		\label{eqn:e13}
		\begin{array}{lll}
			A_2   & \leq&\displaystyle \frac{8}{\eta^2} \, \mathbb{E}\left(     \underset{\beta \in \Lambda   \,:\, \|\beta - \beta^*\|\leq \eta , \, \| \nabla_G\beta\|_1 \leq 5 \|\nabla_G\beta^*\|_1 }{\sup}  \sum_{i=1}^n  [ \varepsilon_i  1_{  \{\vert  \varepsilon_i\vert \leq U_n\}   }  -   \varepsilon_i^{\prime }  1_{  \{\vert  \varepsilon_i^{\prime}\vert \leq U_n\}   }   ](\beta_i - \beta_i^*)    \right)\\
			& =&\displaystyle \frac{8}{\eta^2} \,\mathbb{E}\left(     \underset{\beta \in \Lambda   \,:\, \|\beta - \beta^*\|\leq \eta , \, \| \nabla_G\beta\|_1 \leq 5 \|\nabla_G\beta^*\|_1 }{\sup}  \sum_{i=1}^n  \xi_i[ \varepsilon_i  1_{  \{\vert  \varepsilon_i\vert \leq U_n\}   }  -   \varepsilon_i^{\prime }  1_{  \{\vert  \varepsilon_i^{\prime}\vert \leq U_n\}   }   ](\beta_i - \beta_i^*)    \right)\\
			& \leq &\displaystyle \frac{8}{\eta^2} \,\mathbb{E}\left(     \underset{\beta \in \Lambda   \,:\, \|\beta - \beta^*\|\leq \eta , \, \| \nabla_G\beta\|_1 \leq 5 \|\nabla_G\beta^*\|_1 }{\sup}  \sum_{i=1}^n  \xi_i[ \varepsilon_i  1_{  \{\vert  \varepsilon_i\vert \leq U_n\}   }    ](\beta_i - \beta_i^*)    \right)   \,+\,\\
			& &\displaystyle \frac{8}{\eta^2} \,\mathbb{E}\left(     \underset{\beta \in \Lambda   \,:\, \|\beta - \beta^*\|\leq \eta , \, \| \nabla_G\beta\|_1 \leq 5 \|\nabla_G\beta^*\|_1 }{\sup}  \sum_{i=1}^n  -\xi_i[    \varepsilon_i^{\prime }  1_{  \{\vert  \varepsilon_i^{\prime}\vert \leq U_n\}   }   ](\beta_i - \beta_i^*)    \right)\\
			& = &\displaystyle \frac{16}{\eta^2} \,\mathbb{E}\left[    \underset{\beta \in \Lambda   \,:\, \|\beta - \beta^*\|\leq \eta , \, \| \nabla_G\beta\|_1 \leq 5 \|\nabla_G\beta^*\|_1 }{\sup}  \sum_{i=1}^n  \xi_i \varepsilon_i  1_{  \{\vert  \varepsilon_i\vert \leq U_n\}   }    (\beta_i - \beta_i^*)    \right] \\
			& \leq& \displaystyle \frac{16}{\eta^2} \,\mathbb{E}\left[     \underset{\beta \in  \mathbb{R}^n \,:\, \|\beta - \beta^*\|\leq \eta , \, \| \nabla_G\beta\|_1 \leq 5 \|\nabla_G\beta^*\|_1 }{\sup}  \sum_{i=1}^n   \xi_i\varepsilon_i  1_{  \{\vert  \varepsilon_i\vert \leq U_n\}   } (\beta_i - \beta_i^*)    \right].
		\end{array}
	\end{equation}
	The claim then follows. 
	
\end{proof}

\section{Assumptions for $K$-NN graph for  Theorem \ref{thm4} }
\label{sec:assump}

We start by explicitly defining the construction of the $K$-NN graph. Specifically, $(i,j) \in E$ if and only if   $x_j$ is among the $K$-nearest neighbors  (with respect to the metric  $\mathrm{dist}(\cdot$) of $x_i$, or vice versa.

We now state the assumptions from \cite{padilla2018adaptive} needed for Theorem \ref{thm4}. Throughout $(\mathcal{X},\mathrm{dist})$ is a metric space with Borel sets $\mathcal{B}(\mathcal{X})$.

\begin{assumption}
	\label{cond1}
	The covariates  $\{x_i\}_{i=1}^n$ are independent draws from a density $p$,  with respect to the measurable space $(\mathcal{X},\mathcal{B}(\mathcal{X}), \mu )$, with support $\mathcal{X}$. Furthermore, the density $p$ satisfies $0< p_{\min} < p(x) < p_{\max} $ for all $x \in \mathcal{X}$, where $p_{\min}$ and $p_{\max}$ are constants. 
\end{assumption}

\begin{assumption}
	\label{cond2}
	The base measure satisfies 
	\[
	c_1 r^d \,\leq \,\mu\left[ \{  q  \in  \mathcal{X}\,:\, \mathrm{dist}(q,x)\leq r   \} \right]\,\leq \, c_2 r^d
	\]
	for all $x \in \mathcal{X}$, and all $0<r<r_0$, where  $c_1$, $c_2$ and $r_0$ are all positive constants, and $d \in \mathbb{N} \backslash \{0\}$ is the intrinsic dimension of $\mathcal{X}$. 
\end{assumption}

\begin{assumption}
	\label{cond3}
	There exists a homeomorphism (a continuous bijection with a continuous inverse) $h \,:\, \mathcal{X} \rightarrow  [0,1]^d$ such that 
	\[
	L_{\min} \mathrm{dist}(x,x^{\prime}) \,\leq \,  \| h(x) -h(x^{\prime})\| \,\leq \,L_{\max} \mathrm{dist}(x,x^{\prime}),\,\,\,\forall x,x^{\prime} \in \mathcal{X}, 
	\]
	for some positive constants $L_{\min}$ and $L_{\max}$.
\end{assumption}

For a set $S \subset [0,1]^d$, we let 
\[
B_{t}(S) \,:=\,\{  q \in [0,1]^d \,:\, \|q-q^{\prime}\|\leq t \,\,\text{for some }\,\, q^{\prime } \in S    \}. 
\]
With this notation, we state our next assumption.

\begin{assumption}
	\label{cond4}
	\textbf{[Piecewise Lipschitz].}	The parameter $\beta^*$ satisfies that $\beta_i^* = f_0(x_i)$ for $i=1,\ldots,n$ for some function $f_0$, where the following holds for the function $f_0$.
	\begin{enumerate}
		\item  $f_0$ is bounded.
		\item Let $\partial [0,1]^d$   be the boundary of $[0,1]^d$, and let $\Omega_t = [0,1]^d \backslash B_{t}(\partial [0,1]^d) $. We assume that there exists a set $\mathcal{S}$ such that: 
		\begin{enumerate}
			\item The set $\mathcal{S}$ has Lebesgue measure zero.
			
			\item For some constants $C_{\mathcal{S}}, t_0>0$, we have that 
			\[
			\mu\left[   h^{-1}\left\{B_t(\mathcal{S})\cup   ([0,1]^d \backslash \Omega_t)\right\} \right] \,\leq \,  C_{\mathcal{S}}t
			\] 
			for all $0<t<t_0$.
			\item There exists a positive constant $L_0$ such that if $z$ and $z^{\prime}$ belong to the same connected component of $\Omega_t \backslash B_t(\mathcal{S})$ then 
			\[
			\vert  f_0\circ h^{-1}(z)- f_0\circ h^{-1}(z^{\prime}) \vert \,\leq \, L_0 \|z-z^{\prime}\|.
			\]
		\end{enumerate}
	\end{enumerate}
\end{assumption}

\textbf{Notation:   } We denote as $\mathcal{F}(L_0)$ the set of functions $f \,:\,[0,1]^d \rightarrow \mathbb{R}$ that satisfy Assumption \ref{cond4} with $L_0$ and such that 
\[
\underset{x \in [0,1]^d}{\sup}\,\vert f(x)\vert   \,\leq \, L_0,
\]
and $C_{\mathcal{S}} \leq L_0$.

\section{Assumptions for $K$-NN graph for  Theorem \ref{thm5} }
\label{sec:assump2}

We assume that the covariates $\{x_i \}_{i=1}^n$ satisfy Assumptions \ref{cond1}--\ref{cond3}. In addition, we assume that Assumption \ref{cond4} holds replacing $\beta^*$ with both $v^*$ and $\theta^*$.

\section{Proof of Theorem \ref{thm4} }

\begin{proof}
	\textbf{Proof  of (\ref{eqn:rate1}):}  First, let $G^{\prime}$  be a chain graph corresponding to a DFS ordering in $G$. Based of Theorem \ref{thm3}, we first need to bound
	\begin{equation}
		\label{eqn:B1}
		B_1:=  \displaystyle \frac{16}{\eta^2} \, \mathbb{E}\left[     \underset{\beta \in \mathbb{R}^n  \,:\, \|\beta - \beta^*\|\leq \eta , \, \| \nabla_G\beta\|_1 \leq 5 \|\nabla_G\beta^*\|_1 }{\sup}  \sum_{i=1}^n   \xi_i\varepsilon_i  1_{  \{\vert  \varepsilon_i\vert \leq  U_n\}   } (\beta_i - \beta_i^*)    \right].
	\end{equation}
	To bound this, we recall Lemma 1 in \cite{padilla2016dfs} which implies that $\| \nabla_{G^{\prime}} \beta\|_1 \leq  2 \| \nabla_G\beta\|_1$ for all $\beta \in \mathbb{R}^n$. Hence, 
	\begin{equation}
		\label{eqn:e14}
		\begin{array}{lll}
			B_1 &\leq& \displaystyle \frac{16}{\eta^2} \,\mathbb{E}\left[     \underset{\beta \in \mathbb{R}^n \,:\, \|\beta - \beta^*\|\leq \eta , \, \| \nabla_{G^{\prime}}\beta\|_1 \leq 10 \|\nabla_G\beta^*\|_1 }{\sup}  \sum_{i=1}^n   \xi_i\varepsilon_i  1_{  \{\vert  \varepsilon_i\vert \leq U_n\}   } (\beta_i - \beta_i^*)    \right]\\
			&\leq& \displaystyle \frac{16}{\eta^2} \mathbb{E}\left(  \mathbb{E}\left[     \underset{\beta \in \mathbb{R}^n   \,:\, \|\beta - \beta^*\|\leq \eta , \, \| \nabla_{G^{\prime}}\beta\|_1 \leq 10 \|\nabla_G\beta^*\|_1 }{\sup}  \sum_{i=1}^n   \xi_i\varepsilon_i  1_{  \{\vert  \varepsilon_i\vert \leq U_n\}   } (\beta_i - \beta_i^*)  \, \bigg|  \varepsilon   \right]  \right)\\
			&=& \displaystyle \frac{16U_n}{\eta^2} \mathbb{E}\left(  \mathbb{E}\left[     \underset{\beta \in \mathbb{R}^n   \,:\, \|\beta - \beta^*\|\leq \eta , \, \| \nabla_{G^{\prime}}\beta\|_1 \leq 10 \|\nabla_G\beta^*\|_1 }{\sup}  \sum_{i=1}^n   \xi_i\frac{\varepsilon_i }{U_n} 1_{  \{\vert  \varepsilon_i\vert \leq  U_n\}   } (\beta_i - \beta_i^*)  \, \bigg|  \varepsilon   \right]  \right).
		\end{array}
	\end{equation}
	Then 
	\[
	\begin{array}{l}
		\displaystyle 	 \mathbb{E}\bigg[    \underset{\beta \in \mathbb{R}^n   \,:\, \|\beta - \beta^*\|\leq \eta , \, \| \nabla_{G^{\prime}}\beta\|_1 \leq 10 \|\nabla_G\beta^*\|_1 }{\sup}  \sum_{i=1}^n   \xi_i\frac{\varepsilon_i }{U_n} 1_{  \{\vert  \varepsilon_i\vert \leq  U_n\}   } (\beta_i - \beta_i^*)  \, \bigg|  \varepsilon   \bigg] \\
		\displaystyle  \leq \displaystyle 	 \mathbb{E}\bigg[     \underset{\beta \in \mathbb{R}^n \,:\, \|\beta - \beta^*\|\leq \eta , \, \| \nabla_{G^{\prime}}(\beta -\beta^*)\|_1 \leq 11 \|\nabla_G\beta^*\|_1 }{\sup}  \sum_{i=1}^n   \xi_i\frac{\varepsilon_i }{U_n} 1_{  \{\vert  \varepsilon_i\vert \leq  U_n\}   } (\beta_i - \beta_i^*)  \, \bigg|  \varepsilon   \bigg] \\
		\displaystyle  \leq \displaystyle 	 \mathbb{E}\bigg[     \underset{\beta \in \mathbb{R}^n  \,:\, \|\beta \|\leq \eta , \, \| \nabla_{G^{\prime}}\beta \|_1 \leq 11 \|\nabla_G\beta^*\|_1 }{\sup}  \sum_{i=1}^n   \xi_i\frac{\varepsilon_i }{U_n} 1_{  \{\vert  \varepsilon_i\vert \leq  U_n\}   } \beta_i   \, \bigg|  \varepsilon   \bigg] \\
		\displaystyle  \leq \displaystyle 	 \mathbb{E}\bigg[     \underset{\beta \in  \mathbb{R}^n  \,:\, \|\beta \|\leq \eta , \, \| \nabla_{G^{\prime}}\beta \|_1 \leq 11 \|\nabla_G\beta^*\|_1 }{\sup}  \sum_{i=1}^n   \xi_i \beta_i   \, \bigg|  \varepsilon   \bigg] \\
	\end{array}
	\]
	where  the last inequality follows from Theorem 4.12 in \cite{ledoux1991probability}. As a result, letting  $\tilde{\xi}_i \overset{\text{ind}}{\sim} N(0,1)$ for $i=1,\ldots,n$, we obtain that 
	\[
	\begin{array}{l}
		\displaystyle 	 \mathbb{E}\bigg[     \underset{\beta \in \mathbb{R}^n  \,:\, \|\beta - \beta^*\|\leq \eta , \, \| \nabla_{G^{\prime}}\beta\|_1 \leq 10 \|\nabla_G\beta^*\|_1 }{\sup}  \sum_{i=1}^n   \xi_i\frac{\varepsilon_i }{U_n} 1_{  \{\vert  \varepsilon_i\vert \leq  U_n\}   } (\beta_i - \beta_i^*)  \, \bigg|  \varepsilon   \bigg] \\
		\displaystyle  \leq \displaystyle 	\mathbb{E}\bigg(     \underset{\beta \in  \mathbb{R}^n  \,:\, \|\beta \|\leq \eta , \, \| \nabla_{G^{\prime}}\beta \|_1 \leq 11 \|\nabla_G\beta^*\|_1 }{\sup}  \sum_{i=1}^n   \xi_i \beta_i   \,   \bigg) \\
		\displaystyle  \leq \displaystyle 	 \left(\frac{\pi}{2}\right)^{1/2}\mathbb{E}\bigg(     \underset{\beta \in \mathbb{R}^n  \,:\, \|\beta \|\leq \eta , \, \| \nabla_{G^{\prime}}\beta \|_1 \leq 11 \|\nabla_G\beta^*\|_1 }{\sup}  \sum_{i=1}^n   \tilde{\xi}_i\beta_i   \,  \bigg) \\
		\displaystyle \leq   \, C_1 \bigg\{ \eta\bigg(  \frac{11 \|\nabla_G\beta^*\|_1   n^{1/2} }{\eta} \bigg)^{1/2}   \,+\,\eta \{\log (en)\}^{1/2}\bigg\}
	\end{array}
	\]
	where the second inequality follows fromt a well known fact bounding Rademacher Width by Gaussian Width; e.g see Page
	132 in \cite{wainwright2019high}, and  the last by Lemma B.1 from \cite{guntuboyina2020adaptive}. This implies that
	\[
	B_1 \,\leq\, \frac{16U_n}{\eta^2} \, C_1 \bigg[  \eta\bigg(  \frac{11 \|\nabla_G\beta^*\|_1   n^{1/2} }{\eta} \bigg)^{1/2}   \,+\,\eta\{ \log (en)\}^{1/2}\bigg].
	\]
	
	Therefore, given $a \in (0,1)$, we let  
	\[
	\eta \,:= \,     \frac{4}{a} \left[16^{2/3}n^{1/6}(\log n)^{1/6}  (11)^{1/3}\|\nabla_G\beta^*\|_1^{1/3}    U_n^{2/3}   (C_1)^{2/3}\,+\,  16C_1U_n \{\log (en )\}^{1/2}\right]
	\]
	and $\lambda $ as in (\ref{eqn:lambda}).  Hence,  
	\begin{equation}
		\label{eqn:e15}
		\begin{array}{lll}
			B_1&\leq & \displaystyle \frac{16  C_1 U_n (11\|\nabla_G\beta^*\|_1 )^{1/2} n^{1/4}}{ \eta^{3/2}  }\,+\,  \frac{16 C_1 U_n  \{\log(en)\}^{1/2} }{\eta}\\
			& \leq& \displaystyle  \frac{a^{3/2}}{4^{3/2}} \frac{16 C_1 U_n (11\|\nabla_G\beta^*\|_1 )^{1/2} n^{1/4}}{\left\{16^{2/3}n^{1/6}(\log n)^{1/6}  (11)^{1/3}\|\nabla_G\beta^*\|_1^{1/3}    U_n^{2/3}   (C_1)^{2/3}\right\}^{3/2} } \,+\, \frac{a}{4}
			\\
			&\leq &\displaystyle \frac{a}{2}.
		\end{array}
	\end{equation}

	Furthermore, by Theorem \ref{thm3}, we must bound
	\begin{equation}
		\label{eqn:b2}
		B_2 \,:=\,\displaystyle\frac{16 n^{1/2} \,  \underset{i=1,\ldots,n}{\max} \{\mathbb{E}( \varepsilon_i^4  )\}^{1/4}      \{  \mathrm{pr}(\vert  \varepsilon_i\vert >  U_n) \}^{1/4}}{\eta}. 
	\end{equation}
	However, given our definition of $\eta$, we obtain that 
	\[
	\begin{array}{lll}
		B_2  & \leq&\displaystyle \frac{a}{4} \,\frac{16n^{1/2}\,\underset{i=1,\ldots,n}{\max}   \{\mathbb{E}( \varepsilon_i^4  )\}^{1/4}      \{  \mathrm{pr}(\vert  \varepsilon_i\vert > U_n) \}^{1/4}}{\left[16^{2/3}n^{1/6}(\log n)^{1/6}  (11)^{1/3}\|\nabla_G\beta^*\|_1^{1/3}    U_n^{2/3}   (C_1)^{2/3}\,+\,  16C_1U_n \{\log (en )\}^{1/2}\right]}\\
		& \leq& \displaystyle \frac{a}{4}
	\end{array}
	\]
	where the last inequality follows from (\ref{eqn:c1}). The conclusion of the Theorem follows from Theorem \ref{thm3}.

	\textbf{Proof of rate (\ref{eqn:rate2}):}  As before, we first bound $B_1$ as defined in (\ref{eqn:B1}). Towards that end, let  $\nabla_G^{+}$ the pseudo inverse of $\nabla_G$, and  $\Pi$ the orthogonal projection onto the span of $(1,\ldots,1)^{\top} \in \mathbb{R}^n$.  Then
	notice that
	\[
	\begin{array}{lll}
		B_1&\leq &  \displaystyle \frac{16U_n}{\eta^2} \,\mathbb{E}\left(     \underset{\delta \in \mathbb{R}^n \,:\, \|\delta\|\leq \eta , \, \| \nabla_G\delta\|_1 \leq 6 \|\nabla_G\beta^*\|_1 }{\sup}  \sum_{i=1}^n   \frac{\xi_i\varepsilon_i  1_{  \{\vert  \varepsilon_i\vert \leq  U_n\}   } }{U_n}\delta_i    \right)\\
		& \leq& \displaystyle \frac{16U_n}{\eta^2} \,\mathbb{E}\left(     \underset{\delta \in \mathbb{R}^n \,:\, \|\delta\|\leq \eta , \, \| \nabla_G\delta\|_1 \leq 6 \|\nabla_G\beta^*\|_1 }{\sup}   \tilde{\varepsilon}^{\top }\nabla_G^{+} \nabla_G \delta   \right) \,+\,\\
		&&\displaystyle \frac{16U_n}{\eta^2} \,\mathbb{E}\left(     \underset{\delta \in \mathbb{R}^n \,:\, \|\delta\|\leq \eta , \, \| \nabla_G\delta\|_1 \leq 6 \|\nabla_G\beta^*\|_1 }{\sup}   \tilde{\varepsilon}^{\top }\Pi\delta   \right),\\
	\end{array}
	\]
	where   $\tilde{\varepsilon}_i  = \xi_i\varepsilon_i  1_{  \{\vert  \varepsilon_i\vert \leq  U_n\}   } /U_n$ for $i=1,\ldots,n$. Next, we observe that by  H\"{o}lder’s inequality and Cauchy–Schwarz inequality, it holds that
	,
	\[
	\begin{array}{lll}
		B_1  & \leq&\displaystyle  \frac{96U_n    \|  \nabla_G\beta^*\|_1 }{\eta^2} \mathbb{E}\left(  \|  \big(\nabla_G^{+}\big)^{\top} \tilde{\varepsilon}\|_{\infty}  \right)\,+\, \frac{16U_n}{\eta^2} \, \mathbb{E}\left\{     \underset{\delta \in  \mathbb{R}^n \,:\, \|\delta\|\leq \eta , \, \| \nabla_G\delta\|_1 \leq 6 \|\nabla_G\beta^*\|_1 }{\sup}   \tilde{\varepsilon}^{\top }\Pi\delta   \right\}\\
		& =&\displaystyle  \frac{96U_n    \|  \nabla_G\beta^*\|_1 }{\eta^2} \mathbb{E}\left(  \|  \big(\nabla_G^{+}\big)^{\top} \tilde{\varepsilon}\|_{\infty}  \right)\,+\, \frac{16 U_n  }{\eta^2}  \underset{\delta\,:\, \|\delta\|\leq \eta }{\sup} \bigg(  \frac{1}{n^{1/2}   }\sum_{i=1}^{n}\delta_i  \bigg)    \mathbb{E}\bigg(\bigg\vert\frac{1}{n^{1/2} }\sum_{i=1}^{n} \tilde{\varepsilon}_i \bigg\vert \bigg) \\
		&\leq&	 \displaystyle  \frac{96U_n    \|  \nabla_G\beta^*\|_1 }{\eta^2}\mathbb{E}\left(  \|  \big(\nabla_G^{+}\big)^{\top} \tilde{\varepsilon}\|_{\infty}  \right)\,+\, \frac{16 U_n }{\eta} \mathbb{E}\bigg(\bigg\vert\frac{1}{n^{1/2}  }\sum_{i=1}^{n} \tilde{\varepsilon}_i \bigg\vert \bigg) \\
		& \leq&   \displaystyle  \frac{96U_n    \|  \nabla_G\beta^*\|_1 }{\eta^2}\mathbb{E}\left(  \|  \big(\nabla_G^{+}\big)^{\top} \tilde{\varepsilon}\|_{\infty}  \right)\,+\, \frac{16 U_n }{\eta} \\
		& \leq& \displaystyle  \frac{96U_n    \|  \nabla_G\beta^*\|_1 }{\eta^2} \underset{j}{\max} \|(\nabla_G^{+})_{,j}\|   \,+\, \frac{16 U_n  }{\eta}\\
	\end{array}
	\]
	where the third and last inequalities follow from the  Sub-Gaussian maximal inequality. Next,  by Propositions 4 and 6 from  \cite{hutter2016optimal}, we obtain that 
	\begin{equation}
		\label{eqn:upper}
		\underset{j}{\max} \|(\nabla_G^{+})_{,j}\| \,\leq \, \phi_n \,:=\,\begin{cases}
			C (\log n)^{1/2} & \text{if} \,\,\,d=2,\\
			C,
		\end{cases}
	\end{equation}
	for some constant $C>0$. Therefore,
	\[
	B_1 \,\leq \,\frac{96U_n    \|  \nabla_G\beta^*\|_1 \phi_n }{\eta^2}  \,+\, \frac{16 U_n  }{\eta}.\\
	\]
	Hence, for a given $a \in (0,1)$, we let 
	\[
	\eta \,:=\,\frac{2}{a^{1/2}} (96U_n    \|  \nabla_G\beta^*\|_1 \phi_n )^{1/2} +  \frac{4}{a} \cdot16 U_n 
	\]
	and $\lambda $ as in (\ref{eqn:lambda}).
	
	Therefore, 
	\[
	B_1 \, \leq\,\frac{a^2}{4}+ \frac{a}{4} \,\leq \, \frac{a}{2}.
	\]
	Moreover, from (\ref{eqn:b2}), we have that 
	\[
	\begin{array}{lll}
		B_2  & =&\displaystyle\frac{16 n^{1/2} \,\underset{i=1,\ldots,n}{\max}   \{\mathbb{E}( \varepsilon_i^4  )\}^{1/4}      \{ \mathrm{pr}(\vert  \varepsilon_i\vert >  U_n) \}^{1/4}}{\eta}\\
		& \leq& \displaystyle \frac{a}{4}\frac{  n^{1/2} \,\underset{i=1,\ldots,n}{\max}   \{\mathbb{E}( \varepsilon_i^4  )\}^{1/4}      \{ \mathrm{pr}(\vert  \varepsilon_i\vert >  U_n) \}^{1/4}}{  U_n}\\
		& \leq& \displaystyle \frac{a}{4}.
	\end{array}
	\]
	Therefore,
	\[
	\mathrm{pr}(\|\hat{\beta} -\beta^*\|> \eta   )\,\leq\, \frac{3a}{4}.
	\]
	This proves (\ref{eqn:rate2}).

	\textbf{Proof of  (\ref{eqn:rate3}):}  First, by \cite{madrid2020adaptive},  there exists $N$ satisfying  $N \asymp (n/K)^{1/d}$   and functions $I \,:\,\mathbb{R}^n  \,\rightarrow  \mathbb{R}^n$, and $\tilde{I} \,:\ \mathbb{R}^n  \,\rightarrow \mathbb{R}^{N^d}$ satisfying the properties below.
	
	\begin{itemize}
		\item  \textbf{[Lemma 8 in \cite{padilla2018adaptive}].} Let $\mathcal{E}_1$ be the event  such that 
		\begin{equation}
			\label{eqn:lem8.1}
			\vert   e^{\top}\{\beta - I(\beta) \}\vert\,\leq \,  2\|e\|_{\infty} \|\nabla_G \beta\|_1, \,\,\,\,\,\forall \beta, e \in  \mathbb{R}^n,		
		\end{equation}
		and there exists a  $d$-dimensional lattice $G^{\prime}$ with $N^d$ nodes such that 
		\begin{equation}
			\label{eqn:lem8.2}
			\|\nabla_{G^{\prime}}  \tilde{I}(\beta) \|_1 \,\leq \, \|\nabla_G \beta\|_1, \,\,\,\,\,\forall \beta \in \mathbb{R}^n.		
		\end{equation}
		Then $\mathrm{pr}(\mathcal{E}_1) \rightarrow  1$.

		\item  \textbf{[Lemmas 7, 8 and 10 in \cite{padilla2018adaptive}].} Le $e $ be any  vector of mean zero independent subg-Gaussian($\sigma^2$), then there exists $\tilde{e}$ a  vector of mean zero independent sub-Gaussian($\sigma^2$)  random variables  and a constant $C>0$(not depending on $e$) such that the event $\mathcal{E}_2$ given as 
		\begin{equation}
			\label{eqn:lem10.1}
			\begin{array}{lll}
				\mathcal{E}_2&\,:=\,&\bigg\{	e^{\top} \{  I(\beta) -I(\beta^*)  \} \,\leq \,CK^{1/2}\bigg\{  \|\Pi \tilde{e}\|_2\|\beta- \beta^*\|  \,+\,\| (\nabla_{G^{\prime}})^{+}  \tilde{e}\|_{\infty} (\|\nabla_G \beta^*\|_1+\|\nabla_G\beta\|_1)  \bigg\} ,\\
				& &\,\,\,\,\,\,\,\,\forall \beta \in  \mathbb{R}^n,\,\,\,\,\forall e \,\,    \text{vector of mean zero independent sub-Gaussian}(\sigma^2)
				\bigg\}
			\end{array}
		\end{equation}
		satisfies $\mathrm{pr}(\mathcal{E}_2) \rightarrow 1$.
		
		\item  \textbf{Theorem 2 in \cite{padilla2018adaptive}].}  It holds that for some constant $C_2>0$ the event
		\[
		\mathcal{E}_3 \,:=\, \left\{\|\nabla_G\beta^*\|_1 \,\leq \, C_2   \text{poly}(\log n) n^{1-1/d}\right\}
		\]
		satisfies  $\mathrm{pr}(\mathcal{E}_3) \rightarrow 1$, where $\text{poly}(\cdot)$ is a polynomial function.
		
		
	\end{itemize}

	Let  $\mathcal{E}_4 = \mathcal{E}_1 \cap \mathcal{E}_2 \cap \mathcal{E}_3$
	Notice that as in Theorem \ref{thm3}, with the choice
	\[
	\lambda \,=\,   \frac{\eta^2}{4\| \nabla_G \beta^*\|_1 },
	\]
	we have that 
	\begin{equation}
		\label{eqn:bound}
		\begin{array}{lll}
			\mathrm{pr}( \| \hat{\beta} -\beta^*\|  >\eta | \mathcal{E}_4)    & \leq&\displaystyle\frac{16 n^{1/2}   \,\underset{i=1,\ldots,n}{\max}    \{\mathbb{E}( \varepsilon_i^4  )\}^{1/4}      \{ \mathrm{pr}(\vert  \varepsilon_i\vert > U_n) \}^{1/4}}{\eta}\,+\,\\
			& &\displaystyle \frac{16U_n}{\eta^2} \,\mathbb{E}\left[     \underset{\beta \in \mathbb{R}^n   \,:\, \|\beta - \beta^*\|\leq \eta , \, \| \nabla_G\beta\|_1 \leq 5 \|\nabla_G\beta^*\|_1 }{\sup}  \sum_{i=1}^n   \frac{\xi_i\varepsilon_i  1_{  \{\vert  \varepsilon_i\vert \leq U_n\}   } }{U_n}(\beta_i - \beta_i^*)   \,\bigg| \mathcal{E}_4\right]\\
			& =:&  T_1 +T_2.
		\end{array}
	\end{equation}
	
	Next we bound $T_1$ and $T_2$. To bound $T_2$,  we define 
	$$e_i \,:=\,    \frac{\xi_i\varepsilon_i  1_{  \{\vert  \varepsilon_i\vert \leq U_n\}   } }{U_n},$$ 
	and notice that $\mathbb{E}(e_{i} |\Omega_4)  =  \mathbb{E}(e_{i} ) =0 $,  and $e_i $ is sub-Gaussian($1$) for $i=1,\ldots,n$.  It follows that if $\mathcal{E}_4$ holds, then
	\[
	\begin{array}{lll}
		e^{\top}(\beta-\beta^*)  & = & \displaystyle e^{\top}\{\beta-  I(\beta)\}   \,+\,e^{\top}\{I(\beta)-   I(\beta^*) \}   \,-\,   e^{\top}\{\beta^*-  I(\beta^*)\}   \\ 
		& \leq &\displaystyle  2\|\nabla_G \beta^*\|_1  \,+\, CK^{1/2}\bigg[  \|\Pi \tilde{e}\|_2\|\beta- \beta^*\|  \,+\,\| (\nabla_{G^{\prime}})^{+}  \tilde{e}\|_{\infty} (\|\nabla_G \beta^*\|_1+\|\nabla_G\beta\|_1)  \bigg] \,+\,\\ 
		& &\displaystyle  2\|\nabla_G \beta\|_1,
	\end{array}
	\]
	where 
	Therefore,
	\[
	\begin{array}{lll}
		T_2  & \leq&\displaystyle \frac{16U_n}{\eta^2}\bigg\{ 12C_2 \text{poly}(\log n)  n^{1-1/d}  \,+\, CK^{1/2}\bigg[    \eta  \mathbb{E}( \| \Pi \tilde{e}\|  )    \,+\, 6  C_2 \text{poly}(\log n)n^{1-1/d}   E(\| (\nabla_{G^{\prime}})^{+}  \tilde{e}\|_{\infty} )  \bigg] \bigg\}\\
		& \leq&  \displaystyle \frac{16U_n}{\eta^2}\bigg(  12C_2 \text{poly}(\log n)n^{1-1/d}  \,+\, C K^{1/2}\bigg[    \eta \mathbb{E} \bigg\vert \frac{1}{n^{1/2} }\sum_{i=1}^{n} \tilde{e}_i \bigg\vert    \,+\, 6  C_2 \text{poly}(\log n)n^{1-1/d} \underset{j}{\max} \|(\nabla_G^{+})_{,j}\|   \bigg] \bigg)\\
		& \leq&\displaystyle \frac{16U_n}{\eta^2}\bigg[  12C_2 \text{poly}(\log n)n^{1-1/d}  \,+\, CK^{1/2}\bigg[    \eta  \,+\, 6  C_2 \text{poly}(\log n)n^{1-1/d}\underset{j}{\max} \|(\nabla_G^{+})_{,j}\|   \bigg] \bigg]\\
		& \leq&\displaystyle \frac{16U_n}{\eta^2}\bigg[  12C_2\text{poly}(\log n) n^{1-1/d}  \,+\, CK^{1/2}\bigg[    \eta  \,+\, 6  C_2\text{poly}(\log n)n^{1-1/d}\phi_n  \bigg] \bigg]\\
	\end{array}
	\]
	where the second and third inequalities follow from Sub-Gaussian maximal inequality, and the last from (\ref{eqn:upper}).  Then for a given $a\in(0,1)$, we set 
	\[
	\begin{array}{lll}
		\eta &= & \frac{6^{1/2}}{a^{1/2}  }   (16 \times 12 C_2  \text{poly}(\log n)n^{1-1/d}  U_n)^{1/2} \,+\, \frac{6}{a}(16 C K^{1/2}  U_n)\,+\,\\
		& &\frac{6}{a} (16 \times 6 C C_2 U_n  K^{1/2}  \text{poly}(\log n)\,n^{1-1/d}\phi_n)^{1/2}
	\end{array}
	\]
	and so
	\begin{equation}
		\label{eqn:e16}
		T_2 \,\leq \,  \frac{a}{2}.
	\end{equation}
	Furthermore,
	\begin{equation}
		\label{eqn:e17}
		\begin{array}{lll}
			T_1  & \leq&  \displaystyle  \frac{a}{6} \frac{16 n^{1/2}    \,\underset{i=1,\ldots,n}{\max}   \{\mathbb{E}( \varepsilon_i^4  )\}^{1/4}      \{  \mathrm{pr}(\vert  \varepsilon_i\vert > U_n) \}^{1/4}}{16 C  K^{1/2}  U_n}\\
			& \leq& \displaystyle \frac{a}{6}.
		\end{array}
	\end{equation}
	The claim then follows. 
\end{proof}

\section{Auxiliary lemmas for proof of  Theorem \ref{thm5} }

\begin{lemma}
	\label{lem1}
	Let $\gamma_i^* =  \mathbb{E}(y_i^2)$ for  $i = 1,\ldots,n$. Then
	\[
	\sum_{ (i,j)\in E } \vert  \gamma_i^*  - \gamma_j^* \vert  \,\leq \, 	\sum_{ (i,j)\in E } \vert   v_i^* \,- \, v_j^*\vert  \,+\, 2\|\theta^*\|_{\infty}\sum_{ (i,j)\in E } \vert   \theta_i^* \,- \, \theta_j^*\vert.
	\]
\end{lemma}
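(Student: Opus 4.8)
The plan is to first obtain an explicit formula for $\gamma_i^*$ in terms of $\theta_i^*$ and $v_i^*$, and then reduce the edgewise differences of $\gamma^*$ to those of $v^*$ and $\theta^*$ via the triangle inequality and a factorization. The whole argument is elementary once the formula for $\gamma_i^*$ is in hand.

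First I would compute $\gamma_i^*$. Since $y_i = \theta_i^* + \sqrt{v_i^*}\,\epsilon_i$ with $\mathbb{E}(\epsilon_i) = 0$ and $\mathrm{var}(\epsilon_i) = 1$, expanding the square gives
\[
\gamma_i^* \,=\, \mathbb{E}(y_i^2) \,=\, (\theta_i^*)^2 + 2\theta_i^*\sqrt{v_i^*}\,\mathbb{E}(\epsilon_i) + v_i^*\,\mathbb{E}(\epsilon_i^2) \,=\, (\theta_i^*)^2 + v_i^*.
\]
Hence for any edge $(i,j) \in E$ we have the exact identity $\gamma_i^* - \gamma_j^* = \big((\theta_i^*)^2 - (\theta_j^*)^2\big) + (v_i^* - v_j^*)$.

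Next I would bound a single edge term. Applying the triangle inequality and factoring the difference of squares yields
\[
\vert \gamma_i^* - \gamma_j^* \vert \,\leq\, \vert (\theta_i^*)^2 - (\theta_j^*)^2 \vert + \vert v_i^* - v_j^* \vert \,=\, \vert \theta_i^* - \theta_j^* \vert\,\vert \theta_i^* + \theta_j^* \vert + \vert v_i^* - v_j^* \vert.
\]
Since $\vert \theta_i^* + \theta_j^* \vert \leq \vert \theta_i^* \vert + \vert \theta_j^* \vert \leq 2\|\theta^*\|_\infty$, the first factor is controlled by $2\|\theta^*\|_\infty\,\vert \theta_i^* - \theta_j^* \vert$. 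Summing this bound over all edges $(i,j) \in E$ and separating the two sums gives exactly the claimed inequality.

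There is no real obstacle here; the only thing to be careful about is using $\mathbb{E}(\epsilon_i) = 0$ and $\mathbb{E}(\epsilon_i^2) = 1$ (from the model \eqref{eqn:model}) to kill the cross term and normalize the variance term when computing $\gamma_i^*$, and then invoking the uniform bound $\|\theta^*\|_\infty$ to turn the pointwise factor $\vert \theta_i^* + \theta_j^* \vert$ into a constant multiple. Everything else is the triangle inequality applied edgewise.
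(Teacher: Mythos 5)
Your proof is correct and follows essentially the same route as the paper: both use the identity $\gamma_i^* = v_i^* + (\theta_i^*)^2$, the triangle inequality edgewise, and the factorization $\vert(\theta_i^*)^2 - (\theta_j^*)^2\vert = \vert\theta_i^* - \theta_j^*\vert\,\vert\theta_i^* + \theta_j^*\vert \leq 2\|\theta^*\|_\infty\vert\theta_i^* - \theta_j^*\vert$. The only difference is that you spell out the derivation of $\gamma_i^*$ from the model, which the paper leaves implicit.
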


\begin{proof}
	Notice that \[
	\begin{array}{lll}
		\displaystyle 	\sum_{ (i,j)\in E } \vert  \gamma_i^*  - \gamma_j^* \vert  & =& 	\displaystyle 	\sum_{ (i,j)\in E } \vert   \{v_i^* +(\theta_i^*)^2 \}   \,-\, \{v_j^* +(\theta_j^*)^2 \} \vert  \\
		& \leq & 	\displaystyle 	\sum_{ (i,j)\in E } \vert   v_i^* \,- \, v_j^*\vert  \,+\,\sum_{ (i,j)\in E } \vert   \theta_i^* \,- \, \theta_j^*\vert  (  \vert \theta_i^*\vert  +  \vert \theta_j^*\vert   )\\
		& \leq&\displaystyle 	\sum_{ (i,j)\in E } \vert   v_i^* \,- \, v_j^*\vert  \,+\, 2\|\theta^*\|_{\infty}\sum_{ (i,j)\in E } \vert   \theta_i^* \,- \, \theta_j^*\vert  \\
	\end{array}
	\]
	and the claim follows.
\end{proof}

\begin{lemma}
	\label{lem2}
	For any $U_n>0$ we have that
	\[
	\mathrm{pr}\left(\,\vert y_i^2 - E(y_i^2)\vert  \,>\,2 \|v^*\|_{\infty}^{1/2}\|\theta^*\|_{\infty} U_n   \,+\,  \| v^*\|_{\infty}(1+U_n^2)  \right)\,\leq \, \mathrm{pr}(\vert \epsilon_i\vert >U_n).
	\]
	for $i=1,\ldots,n$.
	
\end{lemma}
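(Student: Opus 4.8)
The plan is to work conditionally on the magnitude of $\epsilon_i$ and reduce the tail bound on $y_i^2 - \mathbb{E}(y_i^2)$ to the tail bound on $\epsilon_i$ via a deterministic inequality valid on the event $\{\vert \epsilon_i\vert \le U_n\}$. First I would record the two moment identities implied by the model (\ref{eqn:model}): since $\mathbb{E}(\epsilon_i)=0$ and $\mathrm{var}(\epsilon_i)=1$, we have $\mathbb{E}(y_i) = \theta_i^*$ and $\mathbb{E}(y_i^2) = (\theta_i^*)^2 + v_i^*$. Substituting $y_i = \theta_i^* + \sqrt{v_i^*}\,\epsilon_i$ then yields the exact decomposition
\[
y_i^2 - \mathbb{E}(y_i^2) \,=\, 2\theta_i^*\sqrt{v_i^*}\,\epsilon_i \,+\, v_i^*(\epsilon_i^2 - 1).
\]

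Next I would bound this quantity deterministically on the good event $\{\vert \epsilon_i\vert \le U_n\}$. By the triangle inequality, $\vert y_i^2 - \mathbb{E}(y_i^2)\vert \le 2\vert \theta_i^*\vert \sqrt{v_i^*}\,\vert \epsilon_i\vert + v_i^*\vert \epsilon_i^2 - 1\vert$. The only step requiring a moment of care is the bound on $\vert \epsilon_i^2 - 1\vert$: on $\{\vert \epsilon_i\vert \le U_n\}$ one has $\epsilon_i^2 - 1 \le U_n^2$ when $\epsilon_i^2 \ge 1$ and $1 - \epsilon_i^2 \le 1$ otherwise, so $\vert \epsilon_i^2 - 1\vert \le \max\{U_n^2,1\} \le 1 + U_n^2$. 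Combining this with $\vert \epsilon_i\vert \le U_n$ and the uniform bounds $\vert \theta_i^*\vert \le \|\theta^*\|_{\infty}$, $v_i^* \le \|v^*\|_{\infty}$ and $\sqrt{v_i^*}\le \sqrt{\|v^*\|_{\infty}}$ gives, on $\{\vert \epsilon_i\vert \le U_n\}$,
\[
\vert y_i^2 - \mathbb{E}(y_i^2)\vert \,\le\, 2\sqrt{\|v^*\|_{\infty}}\,\|\theta^*\|_{\infty} U_n + \|v^*\|_{\infty}(1+U_n^2),
\]
which is exactly the threshold appearing in the statement.

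Finally I would conclude by event containment: the previous display shows that the event $\{\vert y_i^2-\mathbb{E}(y_i^2)\vert > 2\sqrt{\|v^*\|_{\infty}}\|\theta^*\|_{\infty} U_n + \|v^*\|_{\infty}(1+U_n^2)\}$ is disjoint from $\{\vert \epsilon_i\vert \le U_n\}$, hence contained in $\{\vert \epsilon_i\vert > U_n\}$; taking probabilities and using monotonicity of $\mathbb{P}$ yields the claim. There is no substantive obstacle here, as the argument is a pointwise deterministic bound followed by one application of monotonicity; the only place to be slightly careful is in handling $\vert \epsilon_i^2-1\vert$ so as not to drop the additive constant, which is precisely what forces the factor $(1+U_n^2)$ rather than merely $U_n^2$.
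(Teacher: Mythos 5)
Your proposal is correct and follows essentially the same route as the paper: the identical decomposition $y_i^2-\mathbb{E}(y_i^2)=2\theta_i^*\sqrt{v_i^*}\,\epsilon_i+v_i^*(\epsilon_i^2-1)$, a triangle-inequality bound in terms of $\vert\epsilon_i\vert$, and the event-containment conclusion. The only cosmetic difference is that you bound $\vert\epsilon_i^2-1\vert\le\max\{U_n^2,1\}\le 1+U_n^2$ whereas the paper simply uses $\vert\epsilon_i^2-1\vert\le\epsilon_i^2+1$; both give the same threshold.
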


\begin{proof}
	Simply observe that
	\[
	\begin{array}{lll}
		\displaystyle 	y_i^2 -   \mathbb{E}(y_i^2)  & = &\displaystyle \{ \theta_i^*   +  (v_i ^*)^{1/2}\epsilon_i  \}^2 -   \mathbb{E}[\{\theta_i^*   + ( v_i ^*)^{1/2}\epsilon_i\}^2]  \\
		&  = &   \displaystyle 2 (v_i^*)^{1/2}\theta_i^* \epsilon_i   +  v_i ^*\epsilon_i ^2  - v_i^* 
	\end{array}
	\]
	and hence
	\[
	\begin{array}{lll}
		\displaystyle \vert   	y_i^2 -    \mathbb{E}(y_i^2)  \vert   & \leq& 2\|  v^*\|_{\infty}^{1/2}\|\theta^*\|_{\infty}    \vert \epsilon_i\vert+\, \|  v^*\|_{\infty}\vert \epsilon_i\vert^2 \,+\, \| v^*\|_{\infty}
	\end{array}
	\]
	and so the claim follows.

\end{proof}

\section{Proof of Theorem \ref{thm5}}

\begin{proof}
	First notice that
	\[
	\begin{array}{lll}
		\displaystyle 	\frac{1}{n}\|\hat{v} -v^*\|^2   & =&   \displaystyle \frac{1}{n}\sum_{i=1}^{n} \big[\{\hat{\gamma}_i - (\hat{\theta}_i)^2\}- \{\gamma^*_i - (\theta^*_i)^2\}\big]^2\\
		& \leq& \displaystyle  \frac{2}{n}\sum_{i=1}^{n}\big(\hat{\gamma}_i - \gamma^*_i \big)^2   \,+\, \frac{2}{n}\sum_{i=1}^{n}   \big\{   (\hat{\theta}_i)^2-(\theta^*_i)^2\big\}^2 \\
		& \leq& \displaystyle \frac{2}{n}\sum_{i=1}^{n}\big(\hat{\gamma}_i - \gamma^*_i \big)^2   \,+\, \frac{8\| \theta^*\|_{\infty}^2}{n}\sum_{i=1}^{n}   \big(   \hat{\theta}_i-\theta^*_i\big)^2 \\
	\end{array}
	\]
	and so each conclusion of the theorem follows applying  Theorem \ref{thm4}, Lemma \ref{lem1}, and Lemma \ref{lem2}. Specifically, it is clear that the generative model and $\theta^*$ satisfy the  conditions of Theorem \ref{thm4}. As for the estimation of $\gamma^*$,  letting  $r_i   =   y_i^2 -  \mathbb{E}(y_i^2)$, for $i=1,\ldots,n$, we need to verify (\ref{eqn:c1})  for  $\{r_i\}_{i=1}^n$. 
	
	\textbf{Proof of  (\ref{eqn:rate4}).} Notice that by Lemma \ref{lem2}, 
	\[
	\begin{array}{lll}
		\displaystyle 	\frac{n^{1/4}  \,\underset{i=1,\ldots,n}{\max}   \{ \mathrm{pr}(\vert  r_i\vert >  U_n^{\prime}) \}^{1/4}}{U_n^{\prime} \{\log (en )\}^{1/2}\ } & \leq & 	\displaystyle 	\frac{n^{1/4}  \,\underset{i=1,\ldots,n}{\max}   \{ \mathrm{pr}(\vert  r_i\vert >  2\|  v^*\|_{\infty}^{1/2}\|\theta^*\|_{\infty}U_n+\, \|  v^*\|_{\infty}U_n^2 \,+\, \| v^*\|_{\infty}   ) \}^{1/4}}{U_n\{\log (en )\}^{1/2}\ } \\
		& \leq&\displaystyle 	\frac{n^{1/4}  \,\underset{i=1,\ldots,n}{\max}   \{ \mathrm{pr}(\vert  \epsilon_i\vert >  U_n)\}^{1/4}}{U_n\{\log (en )\}^{1/2}\ } \,\rightarrow\,0.\\
	\end{array}
	\]
	Therefore, by Lemma \ref{lem1} and Theorem \ref{thm4},
	\[
	\frac{1}{n}\sum_{i=1}^{n}\big(\hat{\theta}_i - \theta^*_i \big)^2  \,=\, 	 O_{\mathrm{pr}}\left\{\frac{U_n^{4/3}  (\log n)^{1/3} \|\nabla_G\theta^*\|_1^{2/3}    }{n^{2/3}}\,+\, \frac{U_n^2 \log n}{n} \right\},
	\]
	and 
	\[
	\frac{1}{n}\sum_{i=1}^{n}\big(\hat{\gamma}_i - \gamma^*_i \big)^2  \,=\, 	 O_{\mathrm{pr}}\left\{\frac{(U_n^{\prime})^{4/3}  (\log n)^{1/3} \left(\|\nabla_Gv^*\|_1+ \|\theta^*\|_{\infty}\|\nabla_G\theta^*\|_1\right)^{2/3}    }{n^{2/3}}\,+\, \frac{(U_n^{\prime})^2 \log n}{n} \right\},
	\]
	and so the claim (\ref{eqn:rate4}) follows.

	The proof of (\ref{eqn:rate5}) and (\ref{eqn:rate6}) follow similarly. 
\end{proof}

\section{Lower bounds}

\subsection{Proof of Lemma \ref{lem:lower1}}

\begin{proof}

	We notice that
	\begin{equation}
		\label{eqn:lower_bound}
		\begin{array}{l}
			\underset{ \tilde{v} \in  \mathcal{F} }{\inf}\,\,\,\underset{ \theta^*,v^* \in K,   \,\,\, v_i^* \in (\frac{c^2}{8} ,\frac{3c^2}{8}), \,\,\,\, y_i =  \theta^*_i   +  \sqrt{v^*_i} \epsilon_i , \,\,\epsilon_i    \overset{\text{ind} } {\sim}\text{N}(0,1)   }{\sup} \,\mathbb{E}\left(   \frac{1}{n}\|   \tilde{v}(y) - v^*  \|^2  \right)    \\
			\geq 	\underset{ \tilde{v} \in  \mathcal{F} }{\inf}\,\,\,\underset{ \theta^* \in K,   \,\,\, \theta_i^* \in ( \frac{c}{\sqrt{8} },   \frac{\sqrt{3}c}{\sqrt{8} } ), \,\,\,\, y_i =  \theta^*_i   +  \sqrt{\frac{c^2}{2}   -   (\theta^*_i)^2 } \epsilon_i , \,\,\epsilon_i    \overset{\text{ind} } {\sim}\text{N}(0,1)   }{\sup} \,\mathbb{E}\left(   \frac{1}{n}  \sum_{i=1}^n   (   \tilde{v}_i(y)   -    (c^2/2-  (\theta^*_i)^2)   )^2 \right)    \\
			= 	\underset{ \tilde{v} \in  \mathcal{F} }{\inf}\,\,\,\underset{ \theta^*\in K,   \,\,\, \theta_i^* \in ( \frac{c}{\sqrt{8} },   \frac{\sqrt{3}c}{\sqrt{8} } ), \,\,\,\, y_i =  \theta^*_i   +  \sqrt{\frac{c^2}{2}   -   (\theta^*_i)^2 } \epsilon_i , \,\,\epsilon_i    \overset{\text{ind} } {\sim}\text{N}(0,1)   }{\sup} \,\mathbb{E}\left(   \frac{1}{n}  \sum_{i=1}^n   (   \tilde{v}_i(y)   -      (\theta^*_i)^2   )^2 \right)    \\
			\geq \underset{ \tilde{v} \in  \mathcal{F},  \, \tilde{v}_i(\cdot)  \in [\frac{c}{\sqrt{8} },   \frac{\sqrt{3}c}{\sqrt{8} } ]  }{\inf}\,\,\,\underset{ \theta^*\in K,   \,\,\, \theta_i^* \in (\frac{c}{\sqrt{8} },   \frac{\sqrt{3}c}{\sqrt{8} } ), \,\,\,\, y_i =  \theta^*_i   +  \sqrt{\frac{c^2}{2}   -   (\theta^*_i)^2 } \epsilon_i , \,\,\epsilon_i    \overset{\text{ind} } {\sim}\text{N}(0,1)   }{\sup} \,\mathbb{E}\left(   \frac{1}{n}  \sum_{i=1}^n   (   \tilde{v}_i(y)^2   -      (\theta^*_i)^2   )^2 \right)    \\
			\geq \underset{ \tilde{v} \in  \mathcal{F},  \, \tilde{v}_i(\cdot)  \in [\frac{c}{\sqrt{8} },   \frac{\sqrt{3}c}{\sqrt{8} } ] }{\inf}\,\,\,\underset{ \theta^*\in K,   \,\,\, \theta_i^* \in (\frac{c}{\sqrt{8} },   \frac{\sqrt{3}c}{\sqrt{8} } ), \,\,\,\, y_i =  \theta^*_i   +  \sqrt{\frac{c^2}{2}   -   (\theta^*_i)^2 } \epsilon_i , \,\,\epsilon_i    \overset{\text{ind} } {\sim}\text{N}(0,1)   }{\sup} \,\mathbb{E}\bigg(   \frac{1}{n}  \sum_{i=1}^n   (   \tilde{v}_i(y) - \theta^*_i   )^2   \cdot \\
			\,\,\,\,\,\,\,\,\,\,\,\,\,\,\,\,\,\,\,\,\,\,\,\,\,\,\,\,\,\,\,\,\,\,\,\,\,\,\,\,\,\,\,\,\,\,\,\,\,\,\,\,\,  \,\,\,\,\,\,\,\,\,\,\,\,\,\,\,\,\,\,\,\,\,\,\,\,\,\,\,\,\,\,\,\,\,\,\,\,\,\,\,\,\,\,\,\,\,\,\,\,\,\,\,\,\, \,\,\,\,\,\,\,\,\,\,\,\,\,\,\,\,\,\,\,\,\,\,\,\,\,\,\,\,\,\,\,\,\,\,\,\,\,\,\,\,\,\,\,\,\,\,\,\,\,\,\,\,\, \,\,\,\,\, \,\,\,\,\underset{j=1,\ldots\,n}{\min}(     \tilde{v}_j(y) +\theta^*_j  )^2   \bigg)    \\
			\geq  \frac{c^2}{32} \,\underset{ \tilde{v} \in  \mathcal{F},  \, \tilde{v}_i(\cdot)  \in [\frac{c}{\sqrt{8} },   \frac{\sqrt{3}c}{\sqrt{8} } ]  }{\inf}\,\,\,\underset{ \theta^*\in K,   \,\,\, \theta_i^* \in (\frac{c}{\sqrt{8} },   \frac{\sqrt{3}c}{\sqrt{8} } ), \,\,\,\, y_i =  \theta^*_i   +  \sqrt{\frac{c^2}{2}    -   (\theta^*_i)^2 } \epsilon_i , \,\,\epsilon_i    \overset{\text{ind} } {\sim}\text{N}(0,1)   }{\sup} \,\mathbb{E}\left(   \frac{1}{n}  \sum_{i=1}^n   (   \tilde{v}_i(y)   -      \theta^*_i)  )^2 \right)    \\
			\geq \frac{c^2}{32} \,\underset{ \tilde{v} \in  \mathcal{F}  \,  }{\inf}\,\,\,\underset{ \theta^*\in K,   \,\,\, \theta_i^* \in (\frac{c}{\sqrt{8} },   \frac{\sqrt{3}c}{\sqrt{8} }), \,\,\,\, y_i =  \theta^*_i   +  \sqrt{\frac{c^2}{2}   -   (\theta^*_i)^2 } \epsilon_i , \,\,\epsilon_i    \overset{\text{ind} } {\sim}\text{N}(0,1)   }{\sup} \,\mathbb{E}\left(   \frac{1}{n}  \sum_{i=1}^n   (   \tilde{v}_i(y)   -      \theta^*_i)  )^2 \right)    \\
			\geq \frac{c^2}{32} \,\underset{ \tilde{v} \in  \mathcal{F}  \,  }{\inf}\,\,\,\underset{ \theta^*\in K,   \,\,\, \theta_i^* \in (\frac{c}{\sqrt{8} },   \frac{\sqrt{3}c}{\sqrt{8} } ), \,\,\,\, y_i =  \theta^*_i   +  \epsilon_i , \,\,\epsilon_i    \overset{\text{ind} } {\sim}\text{N}(0,\frac{c^2}{8})   }{\sup} \,\mathbb{E}\left(   \frac{1}{n}  \sum_{i=1}^n   (   \tilde{v}_i(y)   -      \theta^*_i)  )^2 \right).    \\
		\end{array}
	\end{equation}

	Next let $d_{\max}$ be the maximum degree of any node in $G$ and  consider  distinct $a_1, \ldots, a_{m} \in [n^{1/d} ] \times \ldots \times   [ n^{1/d}]$, for   $m \in \mathbb{N}$ with $m \asymp n^{1-1/d}$ and $d_{\max}\cdot m \leq  n^{1-1/d}$, such that for all $j,j^{\prime} \in \{1,\ldots,m\} $ it holds that  $a_j$ and $a_{j^{\prime}}$ are not connected by an edge in the $d$-dimensional grid graph associated with $[n^{1/d} ] \times \ldots \times   [ n^{1/d}]$ . Then for  $\eta \in \{-1,1\}^{ m}$ let $\theta_{\eta} \in \mathbb{R}^n$ be given as
	\[
	(\theta_{\eta})_{i}  \,=\,  \begin{cases}
		\frac{c}{ \sqrt{8}}\left[\frac{  \eta_{a_j }  (\sqrt{3}-1) }{4}   +   \frac{   (1+\sqrt{3})}{2} \right] & \text{if }\,\,\, i =     a_{_j} ,\,\,\,\, j  \in \{1,\ldots,m \}  \\
		\frac{c}{ \sqrt{8}} \cdot\frac{    (1+\sqrt{3})}{2} & \text{otherwise.}
	\end{cases}
	\]
	Notice that by construction $(\theta_{\eta})_i \in (\frac{c}{\sqrt{8} },   \frac{\sqrt{3}c}{\sqrt{8} })$ for all $i$ and $\eta \in \{-1,1\}^m$.  Moreover,
	\[
	\| \nabla_G \theta_{\eta} \|_1 \,\leq \,        \frac{ d_{\max}\cdot c  }{ \sqrt{8}} \cdot\frac{m  (\sqrt{3}-1) }{4} \,\leq \, c n^{1-1/d}.
	\]
	In addition, if  $\eta, \eta^{\prime}   \in \{-1,1\}^m$ such that $\|\eta - \eta^{\prime}\|_1= 2$, then 
	\[
	\|   \theta_{\eta} - \theta_{\eta^{\prime} } \| \,=\,     \frac{c}{ \sqrt{8}} \cdot\frac{ (\sqrt{3}-1)}{2}.
	\]
	Also, denoting by $P_{\eta}$ and $P_{\eta^{\prime}}$ the distributions $N( \theta_{\eta},  \frac{c^2}{8}I_n  )$ and $N( \theta_{\eta^{\prime}},  \frac{c^2}{8} I_n  )$, respectively, we obtain that 
	\[
	\mathrm{TV}(P_{\eta}, P_{\eta^{\prime}}) \,\leq\, \sqrt{  \frac{1}{2}  D_{\text{KL} }(P_{\eta},P_{\eta^{\prime}})  }  \,\leq\,  \frac{\sqrt{8}}{2 c } \| \theta_{\eta} - \theta_{\eta^{\prime}}\| \,=\, \frac{(\sqrt{3}-1) }{4}
	\]
	where the first  inequality follows from Pinsker's inequality.  Therefore, by Assouad's lemma, Lemma \ref{lem2} in  \cite{yu1997assouad}, we obtain that

	\begin{equation}
		\label{eqn:lower_bound2}
		\begin{array}{lll}
			\underset{ \tilde{v} \in  \mathcal{F}  \,  }{\inf}\,\,\,\underset{ \theta^*\in K,   \,\,\, \theta_i^* \in (\frac{c}{\sqrt{8} },   \frac{\sqrt{3}c}{\sqrt{8} } ), \,\,\,\, y_i =  \theta^*_i   +  \epsilon_i , \,\,\epsilon_i    \overset{\text{ind} } {\sim}\text{N}(0,\frac{c^2}{8})   }{\sup} \,\mathbb{E}\left(     \sum_{i=1}^n   (   \tilde{v}_i(y)   -      \theta^*_i)^2 \right)  \\
			\gtrsim \frac{m }{2}  \cdot \left(  1-   \frac{(\sqrt{3}-1) }{4}\right)\\
			\gtrsim   n^{1-1/d}.
		\end{array}
	\end{equation}
	Hence, from (\ref{eqn:lower_bound}) and (\ref{eqn:lower_bound2}) imply 
	\[
	\underset{ \tilde{v} \in  \mathcal{F} }{\inf}\,\,\,\underset{ \theta^*,v^* \in K,   \,\,\, v_i^* \in (\frac{c^2}{8} ,\frac{3c^2}{8}), \,\,\,\, y_i =  \theta^*_i   +  \sqrt{v^*_i} \epsilon_i , \,\,\epsilon_i    \overset{\text{ind} } {\sim}\text{N}(0,1)   }{\sup} \,\mathbb{E}\left(   \frac{1}{n}\|   \tilde{v}(y) - v^*  \|^2  \right)    \,\gtrsim   \frac{1}{ n^{1/d}   }.
	\]
\end{proof}

\subsection{Proof of Lemma \ref{lem:lower2}}

\begin{proof}
	
	We observe that 
	\begin{equation}
		\label{eqn:lower_bound3}
		\begin{array}{l}
			\underset{ \tilde{g}  }{\inf}\,\,\,\underset{ f_0,g_0 \in \mathcal{F}(L_0),   \,\,\,  \frac{c^2}{8} \leq g_0 \leq\frac{3c^2}{8}, \,\,\,\, y_i =  f_0(x_i)  +  \sqrt{g_0(x_i)} \epsilon_i , \,\,\epsilon_i    \overset{\text{ind} } {\sim}\text{N}(0,1)   }{\sup} \,\mathbb{E}\left(   \|   \tilde{g} - g_0  \|_2^2  \right)    \\
			\geq 	\underset{ \tilde{g}  }{\inf}\,\,\,\underset{ f_0 \in \mathcal{F}(L_0),   \,\,\, \frac{c}{ \sqrt{8}} \leq f_0 \leq\frac{  c \sqrt{3} }{8}, \,\,\,\, y_i =  f_0(x_i)   +  \sqrt{\frac{c^2}{2}   -   (f_0(x_i))^2 } \epsilon_i , \,\,\epsilon_i    \overset{\text{ind} } {\sim}\text{N}(0,1)   }{\sup} \,\mathbb{E}\left(    \|  \tilde{g}   -    (c^2/2-  (f_0)^2)   \|_2^2 \right)    \\
			= 	\underset{ \tilde{g} }{\inf}\,\,\,\underset{ f_0 \in \mathcal{F}(L_0),   \,\,\, \frac{c}{ \sqrt{8}} \leq f_0 \leq\frac{  c \sqrt{3} }{8}, \,\,\,\, y_i =  f_0(x_i)   +  \sqrt{\frac{c^2}{2}   -   (f_0(x_i))^2 } \epsilon_i , \,\,\epsilon_i    \overset{\text{ind} } {\sim}\text{N}(0,1)   }{\sup} \,\mathbb{E}\left(  \|  \tilde{g}  -      (f_0)^2   \|_2^2 \right)    \\
			\geq \underset{  \frac{c}{ \sqrt{8}} \leq \tilde{g} \leq\frac{  c \sqrt{3} }{8} }{\inf}\,\,\,\underset{ f_0 \in \mathcal{F}(L_0),   \,\,\, \frac{c}{ \sqrt{8}} \leq f_0 \leq\frac{  c \sqrt{3} }{8}, \,\,\,\, y_i =  f_0(x_i)   +  \sqrt{\frac{c^2}{2}   -   (f_0(x_i))^2 } \epsilon_i , \,\,\epsilon_i    \overset{\text{ind} } {\sim}\text{N}(0,1)   }{\sup} \,\mathbb{E}\left(    \|  \tilde{g}^2  -      (f_0)^2   \|_2^2 \right)    \\
			\geq \underset{  \, \frac{c}{ \sqrt{8}} \leq \tilde{g} \leq\frac{  c \sqrt{3} }{8} }{\inf}\,\,\,\underset{ f_0 \in \mathcal{F}(L_0),   \,\,\, \frac{c}{ \sqrt{8}} \leq f_0 \leq\frac{  c \sqrt{3} }{8}, \,\,\,\, y_i =  f_0(x_i)   +  \sqrt{\frac{c^2}{2}   -   (f_0(x_i))^2 } \epsilon_i , \,\,\epsilon_i    \overset{\text{ind} } {\sim}\text{N}(0,1)   }{\sup} \,\mathbb{E}\bigg(    \|  \tilde{g}  -      f_0  \|_2^2  \cdot \\
			\,\,\,\,\,\,\,\,\,\,\,\,\,\,\,\,\,\,\,\,\,\,\,\,\,\,\,\,\,\,\,\,\,\,\,\,\,\,\,\,\,\,\,\,\,\,\,\,\,\,\,\,\,  \,\,\,\,\,\,\,\,\,\,\,\,\,\,\,\,\,\,\,\,\,\,\,\,\,\,\,\,\,\,\,\,\,\,\,\,\,\,\,\,\,\,\,\,\,\,\,\,\,\,\,\,\, \,\,\,\,\,\,\,\,\,\,\,\,\,\,\,\,\,\,\,\,\,\,\,\,\,\,\,\,\,\,\,\,\,\,\,\,\,\,\,\,\,\,\,\,\,\,\,\,\,\,\,\,\, \,\,\,\,\, \,\,\,\,\underset{x \in [0,1]^d}{\inf}(     \tilde{g}(x) +f_0(x) )^2   \bigg)    \\
			\geq \frac{c^2}{32}\,\underset{ \, \frac{c}{ \sqrt{8}} \leq \tilde{g} \leq\frac{  c \sqrt{3} }{8}   }{\inf}\,\,\,\underset{f_0 \in \mathcal{F}(L_0),   \,\,\, \frac{c}{ \sqrt{8}} \leq f_0 \leq\frac{  c \sqrt{3} }{8}, \,\,\,\, y_i =  f_0(x_i)   +  \sqrt{\frac{c^2}{2}   -   (f_0(x_i))^2 } \epsilon_i , \,\,\epsilon_i    \overset{\text{ind} } {\sim}\text{N}(0,1)     }{\sup} \,\mathbb{E}\left(   \|   \tilde{g}   -     f_0  \|_2^2 \right)    \\
			\geq \frac{c^2}{32}\,\underset{  \tilde{g}  }{\inf}\,\,\,\underset{f_0 \in \mathcal{F}(L_0),   \,\,\, \frac{c}{ \sqrt{8}} \leq f_0 \leq\frac{  c \sqrt{3} }{8}, \,\,\,\, y_i =  f_0(x_i)   +  \sqrt{\frac{c^2}{2}   -   (f_0(x_i))^2 } \epsilon_i , \,\,\epsilon_i    \overset{\text{ind} } {\sim}\text{N}(0,1)     }{\sup} \,\mathbb{E}\left(   \|   \tilde{g}   -     f_0  \|_2^2 \right)    \\
			\geq \frac{c^2}{32}\,\underset{  \tilde{g}  }{\inf}\,\,\,\underset{f_0 \in \mathcal{F}(L_0),   \,\,\, \frac{c}{ \sqrt{8}} \leq f_0 \leq\frac{  c \sqrt{3} }{8}, \,\,\,\, y_i =  f_0(x_i)   + \epsilon_i , \,\,\epsilon_i    \overset{\text{ind} } {\sim}\text{N}(0, \frac{c^2}{8} )     }{\sup} \,\mathbb{E}\left(   \|   \tilde{g}   -     f_0  \|_2^2 \right)    \\
			\gtrsim \frac{1}{n^{1/d}}
		\end{array}
	\end{equation}
	where the last inequality follows from Proposition  2 in \cite{castro2005faster}.
\end{proof}

\bibliographystyle{plainnat}
\bibliography{references}	
	
\end{document}